    \let\@fnsymbol\@arabic
\newcommand{\N}{\mathbb{N}}
\newcommand{\R}{\mathbb{R}} 
\newcommand{\rotmedgeq}{\mathrel{\text{\rotatebox[origin=c]{-45}{$\Rightarrow$}}​}}
\title{An Ergodic Theorem for Fleming-Viot Models in Random Environments \footnote{Research partially supported by CNPq.}}
\author{Arash Jamshidpey \thanks{IMPA, Estrada Dona Castorina 110, 22460-320, Rio de Janeiro, RJ, Brazil, email: arashj@impa.br}}
\date{2016}
\newtheorem{theorem}{Theorem}[section]
\newtheorem{lemma}[theorem]{Lemma}
\newtheorem{proposition}[theorem]{Proposition}
\newtheorem{remark}[theorem]{Remark}
\newtheorem{definition}{Definition}
\begin{document}

\maketitle

\begin{abstract}
The Fleming-Viot (FV) process is a measure-valued diffusion that models the evolution of type frequencies in a countable population which evolves under resampling (genetic drift), mutation, and selection. In the classic FV model the fitness (strength) of types is given by a measurable function. In this paper, we introduce and study the Fleming-Viot process in random environment (FVRE), when by random environment we mean the fitness of types is a stochastic process with c\`adl\`ag paths. We identify FVRE as the unique solution to a so called quenched martingale problem and derive some of its properties via martingale and duality methods. We develop the duality methods for general time-inhomogeneous and quenched martingale problems. In fact, some important aspects of the duality relations only appears for time-inhomogeneous (and quenched) martingale problems. For example, we see that duals evolve backward in time with respect to the main Markov process whose evolution is forward in time. Using a family of function-valued dual processes for FVRE, we prove that, as the number of individuals $N$ tends to $\infty$, the measure-valued Moran process $\mu_N^{e_N}$ (with fitness process $e_N$) converges weakly in Skorokhod topology of c\`adl\`ag functions to the FVRE process $\mu^e$ (with fitness process $e$), if $e_N \rightarrow e$ a.s. in Skorokhod topology of c\`adl\`ag functions. We also study the long-time behaviour of FVRE process $(\mu_t^e)_{t\geq 0}$ joint with its fitness process $e=(e_t)_{t\geq 0}$ and prove that the joint FV-environment process $(\mu_t^e,e_t)_{t\geq 0}$ is ergodic under the assumption of weak ergodicity of $e$.
\end{abstract}

\tableofcontents

\section{Introduction}

Probabilistic models play a crucial role in population genetics. In particular, for a long time, different popular models in interacting particle systems have been used to model several population dynamics. In fact two important mechanisms of evolution in population dynamics, namely mutation and natural selection, are better to be understood as random time-varying parameters. The dynamics of a population is effected by environmental changes. In fact, the genetic variations exist in the genomes of species and these variations, in turn, are in interaction with environments. Natural selection, as the most important mechanism of evolution, favors the fitter type in an organism. The fitness of different types determines the role of "natural selection" in a population and depends on important environmental parameters. It is a function of environmental changes and other evolutionary mechanisms, i.e. mutation and genetic drift. Subsequently, an important question is the effect of environmental changes on the structure of the population. ``Adaptive processes have taken centre in molecular evolutionary biology. Time dependent fitness functions has opposing effects on adaptation. Rapid fluctuations enhance the stochasticity of the evolutionary process and impede long-term adaptation.\cite{Mustonen-Lassig10}" In other words, living in rapidly varying environments, a population is not able to adapt to the environment. Because of simplicity, the existing probabilistic models in population genetics mainly concern problems in which the natural selection is not time-dependent. This decreases the validity of models and does not allow the study of the interactions between the environment and the population. In other words, they cannot explain the real effect of the environment on adaptation of a population system. In fact, it is both more realistic and also challenging to have a random environment varying in time.

In this paper, we study long time behaviours of some countable probabilistic population dynamics in random environment. For this purpose we make use of the martingale problem and the duality method and we develop a generalization of existing methods in the literature to the case of time-inhomogeneous Markov processes. In particular, in this paper, the duality method is studied for time-inhomogeneous Markov processes and Markov processes in random environments. In the case of their existence, dual processes are powerful tools to prove uniqueness of martingale problems and to understand the long-time behaviour of Markov processes. We apply these methods in order to define the Fleming-Viot process in random environment. In fact this process arises as a weak limit of the so called Moran processes in random environment which are natural generalizations of their counterpart in deterministic environment. Identifying the Fleming-Viot process in random environment as the solution to a quenched martingale problem, we study its long-time behaviour via studying the long-time behaviour of its dual process.

The classic particle Moran process is a basic probabilistic population dynamic which models the evolution of frequency of types (alleles) in a population with $N\in \N$ individuals. Letting the fitness of types be a stochastic process, we generalize this model, and introduce a finite population system in random environment, namely the particle Moran process in random environment (PMRE) with type space $I$, and resampling, mutation, and selection rates $\frac{\gamma}{2},\beta,\frac{\alpha}{N}>0$, respectively. Here, we assume that the type space is a general metric space. However for the results of the paper, we always assume that $I$ is compact. Let $E$ be a family of continuous functions from $I$ to $[0,1]$, endowed with the sup-norm topology. Later, for the results of this paper, we also assume that $E$ is compact. A (bounded) fitness process is an $E$-valued measurable stochastic process $(e_t)_{t\geq 0}$,  with c\`adl\`ag paths, defined on a probability space  $(\Omega,\mathbb{P},\mathcal{F})$. The fitness process has the role of natural selection in the environment, and determines the fitness of types in the population dynamic. Consider a population of $N$ individuals, labelled by $1,...,N$.  The PMRE process is a continuous-time, $I^N$-valued Markov process in which each individual carries a configuration (type) in $I$ and population evolves as a pure jump process when jumps occur at independent Poisson times of resampling (genetic drift), mutation and selection. More precisely, for $i=1,...,N$ and $t\geq 0$, denote by $Y_i^{N,e}(t)$ the type of individual $i$ at time $t$, and let $Y^{N,e}(t):=(Y_1^{N,e}(t),...,Y_N^{N,e}(t))$. The PMRE process $(Y^{N,e}(t))_{t\geq 0}$ evolves as follows. Between every ordered pair of individuals $i,j$ ($i\neq j$), resampling  events occur at rate $\gamma/2 >0$ and upon a resampling event the $j$th individual dies and is replaced by an offspring of the $i$th individual. Also the type of every individual, independently, changes from $x \in I$ to $y \in I$ with mutations at rate $\beta q(x, dy)$ where $\beta>0$, and $q$ is a stochastic kernel on $I$. Every ordered pair of individuals $i,j$ ($i\neq j$) is involved in a possible selective event at rate $\alpha /N$, for $\alpha \geq 0$.  Upon a possible selective event at time $t$,  with probability $e_t( Y_i^{N,e} (t))$, the $j$th individual dies and is replaced by an offspring of the $i$th one, and with probability $1-e_t( Y_i^{N,e} (t))$ no change happens. Note that, always, there exist constants $\beta',\beta''\geq 0$ and probability kernels $q'(dy)$ and $q''(x,dy)$, where
\begin{equation}
\begin{array}{l}
\beta q(x,dy)=\beta'q'(dy)+\beta''q''(x,dy).
\end{array}
\end{equation}
We call $\beta' q'(dy)$ the parent-independent component of the mutation.

Considering the frequency of alleles at each time, it is convenient to project $Y^{N,e}$ onto a purely atomic (with at most $N$ atoms) measure-valued process on $\mathcal{P}^N(I)$, that is the space of all probability measures $m$ on $I$ with at most $N$ atoms such that $Nm(.)$ is a counting measure. More precisely, for any $t\geq 0$, let
\begin{equation}
\mu_N^e(t)=\frac{1}{N}\sum\limits_{i=1}^N \delta_{Y_i^{N,e}(t)}
\end{equation}
where, for $a\in I$, $\delta_a$ is the delta measure on $a$. For some results in this paper, we assume that $e$ is independent of the initial distribution of $\mu_N^e$, the mutation kernel, and Poisson times of jumps (for the dual process). Let $E$ be a compact subset of $\mathcal{C}(I,[0,1])$ equipped with the sup-norm topology. We assume that the fitness process is a measurable stochastic process with sample paths in $D_E[0,\infty)$, the space of c\`adl\`ag functions endowed with the Skorokhod topology. Letting $N\rightarrow \infty$, the Fleming-Viot process in random environment arises as the weak limit of $\mu_N^e$ in $D_{\mathcal{P}(I)}[0,\infty)$, where $\mathcal{P}(I)$ is the space of Borel probability measures on $I$ endowed with the topology of weak convergence. We characterize this process as a solution to a martingale problem in random environment (called quenched martingale problem). The main purpose of this paper is to study the long-time behaviour of Fleming-Viot processes in random environment. In order to do that, we develop the duality method to the case of time-dependent and quenched martingale problems.
 Our goal is to set up the martingale and duality method for measure-valued Moran and Fleming-Viot processes in random environments. We study the convergence and ergodic theorems for these processes. We organize the paper as follows. In the rest of the first section, after introducing some general notations, we set up the time-inhomogeneous martingale problems and bring some criteria for existence and uniqueness of solutions. In subsection 1.3 we introduce the notion of operator-valued stochastic processes and generalize the time-inhomogeneous martingale problem to quenched martingale problems in order to characterize Markov processes in random environments as their solutions. In this section, we also define the joint annealed-environment process, where we consider the evolution of the annealed process together with its associated environment. Section 2 is devoted to martingale characterization of Moran and Fleming-Viot processes in random environments (r.e.). The statement of the main theorems will be given in this section as well. Section 3 develops the duality method in the case of general time-inhomogeneous and quenched martingale problems. Section 4 presents a function-valued dual for the Fleming-Viot process in random environment and studies its long-time behaviour. In section 5, we prove the convergence of infinitesimal generators of Moran processes in random environments to that of the Fleming-Viot process in random environment. The proof of the well-posedness of the quenched Fleming-Viot martingale problem, along with the convergence of the Moran process in r.e. to Fleming-Viot process in r.e., will come in section 6. Section 7 is devoted to the proof of continuity of sample paths of the Fleming-Viot process in r.e.. Finally, in section 8, we prove the ergodic theorem for the Fleming-Viot process in random environment.

\subsection{General notations}
For metric spaces $(S,d_S)$ and $(S',d_{S'})$, we denote by $\mathcal C(S,S')=\mathcal{C}^0(S,S')$ and $\mathcal C^k(S,S')$ (for $k \geq 1$) the space of all continuous,  and $k$ times continuously differentiable (Borel measurable) functions form $S$ to $S'$, respectively. In particular, when $S'$ is the set of real numbers with the standard topology, we replace $\mathcal C(S,S')$ and $\mathcal C^k(S,S')$ by $\mathcal C(S)=\mathcal{C}^0(S)$ and $\mathcal{C}^k(S)$, respectively. Let $\mathcal{B}(S)$, $\bar{\mathcal C}(S)=\bar{\mathcal{C}}^0(S)$, and $\bar{\mathcal C}^k(S)$ (for $k \geq 1$) be the space of all bounded, bounded continuous,  and bounded $k$ times continuously differentiable Borel measurable real-valued functions on $S$, respectively, with norm $\|f\|_\infty=\|f\|_\infty^S:=\sup_{x \in S} |f(x)|$. The topology induced by this norm is called the sup-norm topology. 
We equip the space of all $S$-valued c\`adl\`ag functions, namely the space of all right continuous with left limits $S$-valued functions defined on $\R_+$, with Skorokhod topology, and denote it by $D_S[0,\infty)$. We denote by $\mathscr{B}(S)$ both the Borel $\sigma$-field and the space of all Borel measurable real-valued functions on $S$. Denote by $\mathcal{P}(S)$ the space of all (Borel) probability measures on $S$, equipped with the weak topology, and let "$\Rightarrow$" denote convergence in distribution. Also for $S_n\subset S$, for natural numbers $n$, say a sequence of $S_n$-valued random variables, namely $(Z_n)_{n\in\N}$, converges weakly to an $S$-valued random variable $Z$, if $\iota_n(Z_n)\Rightarrow Z$ as $n\rightarrow \infty$, where $\iota_n:S_n\rightarrow S$ is the natural embedding map. In general, for two topological spaces $S_1$ and $S_2$, by $S_1\times S_2$ we mean the Cartesian product of two spaces equipped with the product topology, and by $\mathcal{P}(S_1\times S_2)$ we mean the space of all Borel probability measures on $S_1\times S_2$. Otherwise, we shall indicate it if we furnish the product space with another topology. Also, denote by $<m,f>$ or $<f,m>$ 
the integration $\int_S f dm$ for $m\in \mathcal{P}(S)$ and $f\in \mathcal{B}(S)$ (or more generally, when $m\in \mathcal{P}(S)$ is given, for all $m$-integrable functions).

Throughout this paper, $S$ is a general Polish space, i.e. a separable completely metrizable topological space, with at least two elements (to avoid triviality), and we assume $(\Omega, \mathbb{P}, \mathcal{F})$ is a probability space, and all random variables and stochastic processes will be defined on this space. Also, we restrict random variables and stochastic processes to take values only on Polish spaces. We denote by $\mathbb{P}\zeta^{-1}$ the law of an $S$-valued random variable $\zeta$ (similarly, a measurable stochastic process $\zeta=(\zeta_t)_{t\geq 0}$). Let $m=\mathbb{P}\zeta^{-1}$. For an $m$-integrable real-valued function $f$ on $S$, the expected value of $f(\zeta)$ is denoted by $\mathbb{E}[f(\zeta)]$, or to emphasise the law of $\zeta$, by $\mathbb{E}_m[f(\zeta)]$. Also, by $\mathbb{E}^x[f(\zeta)]$ ($\mathbb{E}^{p_0}[f(\zeta)]$, respectively), we put emphasis on the initial state $x\in S$ (initial distribution $p_0\in \mathcal{P}(S)$, respectively) of the process $\zeta$.
\subsection{Time-inhomogeneous martingale problem: existence and uniqueness}
We can think of an operator $G$ on a Banach space $L$ as a subset of $L\times L$. This definition allows $G$ to be a multi-valued operator. A linear operator is one that is a linear subspace of $L\times L$. Observe that a linear operator $G$ is single-valued, if the condition $(0,g)\in G$ implies $g=0$. For a single-valued linear operator $G$, the domain of $G$, denoted by $\mathcal{D}(G)$, is the set of elements of $L$ on which $G$ is defined. In other words, $\mathcal{D}(G)=\{f\in L: (f,g)\in G\}$. Also, the range of an operator $G$ is denoted by $\mathcal{R}(G)=\{g\in L: (f,g)\in G\}$. Let $\mathcal{D}$ be a linear subspace of $L$. A time-dependent single-valued linear operator $G$ is a mapping from $\R_+ \times \mathcal{D}$ to $L$ such that $G(t,.):\mathcal{D} \rightarrow L$ is a single-valued linear operator. For simplicity, we set $G_t(.)=G(t,.)$ for any $t\geq 0$. In the sequel, we only deal with time-dependent linear operators for which the domain of $G_t$ is $\mathcal{D}$ for any $t\geq 0$. Therefore we define the domain of $G$ to be $\mathcal{D}(G)=\mathcal{D}$. As discussed above, we can identify the time-dependent generator of an $S$-valued inhomogeneous Markov process with the domain $\mathcal{D}\subset \mathcal{B}(S)$ as a subset of $\mathcal{B}(S) \times \mathcal{B}(S)$ (not necessarily linear). For our purposes in this paper we assume that all the operators are linear and single-valued, therefore their domains are linear subspaces of $\mathcal{B}(S)$. In the sequel we consider the generators of Markov processes as both single-valued linear operators and also linear subspaces of $\mathcal{B}(S) \times \mathcal{B}(S)$ for which containing $(0,g)$ implies $g=0$. Similarly, for a time-dependent infinitesimal generator of a time-inhomogeneous Markov process, $G:\R_+ \times \mathcal{D}\rightarrow \mathcal{B}(S)$, for any $t\geq 0$, we assume that $G_t$ is a single-valued linear operator and also a linear subspace of $\mathcal{B}(S) \times \mathcal{B}(S)$ for which containing $(0,g)$ implies $g=0$.

Here we speak of martingale problems for general Markov processes (the time-inhomogeneous case). A martingale problem is identified by a triple $(G,\mathcal{D},\textbf{P}_0)$, where $\textbf{P}_0 \in \mathcal{P}(S)$, $\mathcal{D} \subset \mathcal{B}(S)$, and $G: \R_+ \times \mathcal{D} \rightarrow \mathcal{B}(S)$ is a time-dependent linear operator with domain $\mathcal{D}$.
\begin{definition}
An $S$-valued measurable stochastic process $\zeta=(\zeta_t)_{t \in \R}$, defined on $(\Omega,\mathbb{P},\mathcal{F})$ is said to be a solution of the martingale problem $(G,\mathcal{D},\textbf{P}_0)$ if for $\zeta$ with sample paths in $D_S[0,\infty)$ and initial distribution $\textbf{P}_0$, for any $f\in \mathcal{D}$
\begin{equation}
f(\zeta(t))-\int\limits_0^t G_s f(\zeta(s)) ds
\end{equation} 
is a $\textbf{P}$-martingale with respect to the canonical filtration, where $\textbf{P}\in \mathcal{P}(D_S[0,\infty))$ is the law of $\zeta$. We also say that $\textbf{P}$ is a solution of $(G,\mathcal{D},\textbf{P}_0)$. The process $\zeta$ or its law $\textbf{P}$ is said to be a general solution if the sample paths of $\zeta$ are not necessarily in $D_S[0,\infty)$, i.e. the support of $\textbf{P}$ is not contained in $D_S[0,\infty)$. We say the martingale problem is well-posed if there is a unique solution (with paths in $D_S[0,\infty)$, general solutions not considered), that is there exists a unique $\textbf{P}\in \mathcal{P}(D_S[0,\infty))$ that solves the martingale problem. It is said to be $\mathcal{C}([0,\infty),S)$-well posed, if it has a unique solution $\textbf{P}$ in $\mathcal P(\mathcal{C}([0,\infty),S))$.
\end{definition}
The following concepts are useful in order to prove the uniqueness of martingale problems. 
\begin{definition}
We say a set of functions $U \subset \bar{C}(S)$ (more generally, $U\subset \mathcal{B}(S)$) separates points if for every $x,y \in S$ with $x\neq y$ there exists a function $f \in U$ for which $f(x)\neq f(y)$. We say $U$ vanishes nowhere if for any $x\in S$ there exists a function $f\in U$ such that $f(x)\neq 0$.
\end{definition}
\begin{definition}\label{Definition-convergence and measure-determining general}
A collection of functions $U \subset \bar{C}(S)$ (more generally, $U\subset \mathcal{B}(S)$) is said to be measure-determining on $\mathcal{M}\subset \mathcal{P}(S)$ if for $\textbf{P},\textbf{P}' \in \mathcal{M}$, assuming
\begin{equation}
\int\limits_S f d\textbf{P}=\int\limits_S f d\textbf{P}'
\end{equation}
for all $f \in U$ implies $\textbf{P}=\textbf{P}'$. We say $U$ is measure-determining, if it is measure-determining on $\mathcal{P}(S)$. Also, we say $U$ is convergence-determining on $\mathcal{M}\subset \mathcal{P}(S)$ if for the sequence of probability measures $(\textbf{P}_n)_{n\in\N}$ and the probability measure $\textbf{P}$ in $\mathcal{M}$
\begin{equation}\label{convergence-determining}
\lim\limits_{n \rightarrow \infty} \int\limits_S f d\textbf{P}_n = \int\limits_S f d \textbf{P} \ \ for \ \ all \ \ f\in U
\end{equation}
implies $\textbf{P}_n \Rightarrow \textbf{P}$. We say $U$ is convergence-determining, if it is convergence-determining on $\mathcal{P}(S)$.
\end{definition}
If $U\subset \bar{\mathcal{C}}(S)$ is convergence-determining then it is measure-determining , but the converse is not true in general. Two concepts are equivalent for compact $S$ (see Lemma 3.4.3 \cite{EK86-book}).

In order to be able to transform some useful properties from the time-independent martingale problems to time-dependent ones, it is convenient to define the space-time process for the $S$-valued stochastic process $\zeta=(\zeta_t)_{t\geq 0}$ by $\zeta_t^*=(\zeta_t,t)$, which is an $S\times \R_+$-valued stochastic process. Consider the particular case when $\zeta$ is an $S$-valued time-inhomogeneous Markov process with time-dependent generator $G:\R_+ \times \mathcal{D}(G) \rightarrow \mathcal{B}(S)$, i.e. $(G_t: \mathcal{D}(G) \rightarrow \mathcal{B}(S))_{t\geq 0}$. Let $(T_{s,r})_{0\leq s \leq t}$ be the time-inhomogeneous semigroup of $\zeta$, corresponding to $G$ defined by  
\begin{equation}
T_{s,r}f(x)=\int\limits_S f(y)p(s,x;r,dy) \ for \ f\in \mathcal{B}(S),
\end{equation}
where $p(s,x;r,dy)$ is the transition probability for $\zeta$. From the definition of space-time process, $\zeta^*$ is also Markov with the transition probability
\begin{equation}
p^*(t,(x,s),d(y,r))=\delta_{t+s}(r)p(s,x;r,dy)
\end{equation}
where $\delta_u(.)$ is the delta measure on $\R_+$ for $u\geq 0$. Therefore, the semigroup of $\zeta^*$ is given by
\begin{equation}
\begin{array}{l}
T_t^*f^*(x,s)=\int\limits_{S\times[0,\infty)} f^*(y,r)p^*(t,(x,s),d(y,r))=\\
\int\limits_S f^*(y,t+s)p(s,x;t+s,dy)
\end{array}
\end{equation}
for $f^*\in \mathcal{B}(S\times [0,\infty))$. Let $G^*$ be the infinitesimal generator of $\zeta^*$ with domain $\mathcal{D}(G^*)$. Let $\mathcal{D}^*:=\{fh\in \mathcal{B}(S\times [0,\infty)):f\in \mathcal{D}(A),h\in \mathcal{C}_c^1([0,\infty),\R)\}$ where $\mathcal{C}_c^1([0,\infty),\R)$ is  the set of all continuously differentiable real valued functions on $[0,\infty)$ with compact support. In particular for any $f^*=fh \in \mathcal{D}^*$ with $f\in \mathcal{D}(A)$ and $h\in \mathcal{C}_c^1([0,\infty),\R)$, we have
\begin{equation}
T_t^*fh(x,s)=h(s+t)T_{s,s+t}f(x).
\end{equation}
Therefore, $\mathcal{D}^*\subset \mathcal{D}(G^*)$, and the infinitesimal generator of $\zeta^*$ restricted to the domain $\mathcal{D}^*$ is given by
\begin{equation}
\begin{array}{l}
G^*fh(x,s)=
h(s)G_sf(x)+h'(s)f(x).
\end{array}
\end{equation}
Let $G'=G^*|_{\mathcal{D}^*}$. In other words, $G' \subset\mathcal{B}(S\times \R_+) \times \mathcal{B}(S\times\R_+)$ is defined by
\begin{equation}
G'=\{(fh,gh+fh'):(f,g)\in G, h\in \mathcal{C}_c^1([0,\infty),\R)\}.
\end{equation}
By Theorem $4.7.1$ of (Ethier and Kurtz-1986 \cite{EK86-book}) , $\zeta$ is a solution to the time-dependent martingale problem $(G,\mathcal{D}(G),\mathbf{P}_0)$ for $\mathbf{P}_0\in \mathcal{P}(S)$ if and only if $\zeta^*$ is a solution to the martingale problem $(G^*,\mathcal{D}^*,\mathbf{P}_0^*)$, where $\mathbf{P}_0^*\in \mathcal{P}(S\times [0,\infty))$ is the image of $\mathbf{P}_0$ under the projection $x\mapsto (x,0)$, that is $\mathbf{P}_0^*(A,r)=\delta_0(r)\mathbf{P}_0(A)$ for $A\in \mathscr{B}(S)$. If, in addition, we assume $\mathcal{D}(G) \subset \bar{\mathcal{C}}(S)$ and that it also separates points and vanishes nowhere, then we can extend $G'$ to a subset of $G^*$ whose domain is an algebra which separates points. As $G^*$ is linear and as $\mathcal{D}^*$ is closed under pointwise multiplication of functions, the algebra of functions generated by $\mathcal{D}^*$, denoted by $\mathcal{D}^{**}$, is a linear subspace of $\mathcal{D}(G^*)$. Also $\mathcal{D}^{**}$ separates points and vanishes nowhere. Hence $\mathcal{D}(G^*)$ is dense in $\bar{\mathcal{C}}(S\times \R_+)$ in the topology of uniform convergence on compact sets that concludes $G^*:\mathcal{D}(G^*) \rightarrow \bar{\mathcal{C}}(S\times \R_+)$.
 Let $G''=G^*|_{\mathcal{D}^{**}}$. Consider the martingale problem $(G^*,\mathcal{D}^{**},\mathbf{P}_0^*)$ (or, with an equivalent notation, $(G'',\mathcal{D}^{**},\mathbf{P}_0^*)$). By linearity, any solution to $(G^*,\mathcal{D}^{**},\mathbf{P}_0^*)$ is a solution to $(G^*,\mathcal{D}^*,\mathbf{P}_0^*)$ and vice versa. Hence $\zeta$ is a solution to the time-dependent martingale problem $(G,\mathcal{D}(G),\mathbf{P}_0)$ if and only if $\zeta^*$ is a solution to the martingale problem $(G^*,\mathcal{D}^{**},\mathbf{P}_0^*)$.

The following lemma is useful to prove uniqueness in the case that we have a Markov solution of a time-dependent martingale problem.
\begin{lemma}\label{constructive uniqueness of martingale problem}
Let $S$ be a Polish space and $G=(G_t)_{t\geq0}$ be a time-dependent linear operator on $\mathcal{B}(S)$ with the domain $\mathcal{D}(G)\subset \bar{\mathcal{C}}(S)$ that contains an algebra of functions that separates points. Suppose there exists an $S$-valued Markov process with generator $G$ and initial distribution $\mathbf{P}_0 \in \mathcal{P}(S)$. Then the time-inhomogeneous martingale problem $(G,\mathcal{D}(G),\mathbf{P}_0)$ is well-posed.
\end{lemma}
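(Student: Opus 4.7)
My strategy is to lift the time-inhomogeneous martingale problem to a time-homogeneous one on the enlarged state space $S\times\R_+$ via the space-time construction just developed, and then run the standard uniqueness argument for time-homogeneous martingale problems that already admit a Markov solution whose generator has a sufficiently rich domain.

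\textbf{Reduction to the homogeneous problem.} First, let $\zeta$ be the hypothesized Markov process with generator $G$ and initial law $\mathbf{P}_0$, and form $\zeta^*_t=(\zeta_t,t)$. By the construction recalled just above (essentially Theorem 4.7.1 of \cite{EK86-book}), $\zeta^*$ is a time-homogeneous Markov process on $S\times\R_+$ with generator $G^*$, and $\zeta$ solves $(G,\mathcal{D}(G),\mathbf{P}_0)$ if and only if $\zeta^*$ solves $(G^*,\mathcal{D}^{**},\mathbf{P}_0^*)$. Because $\mathcal{D}(G)\subset\bar{\mathcal{C}}(S)$ contains an algebra separating points and $\mathcal{C}_c^1([0,\infty),\R)$ separates points and vanishes nowhere on $\R_+$, the algebra $\mathcal{D}^{**}$ generated by $\mathcal{D}^{*}$ separates points and vanishes nowhere in $\bar{\mathcal{C}}(S\times\R_+)$. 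Existence of a solution to the homogeneous problem is already furnished by $\zeta^*$, so the task reduces to proving uniqueness of $(G^*,\mathcal{D}^{**},\mathbf{P}_0^*)$.

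\textbf{Uniqueness in the homogeneous setting.} Next, let $\tilde\zeta^*$ be any other solution with sample paths in $D_{S\times\R_+}[0,\infty)$. I proceed in two substeps. First, Dynkin's formula gives, for every $\varphi\in\mathcal{D}^{**}$,
\begin{equation*}
\mathbb{E}[\varphi(\tilde\zeta^*_t)]=\int\varphi\,d\mathbf{P}_0^*+\int_0^t\mathbb{E}[G^*\varphi(\tilde\zeta^*_s)]\,ds,
\end{equation*}
and the same identity for $\zeta^*$. A Laplace-transform / resolvent argument in the spirit of Section 4.4 of \cite{EK86-book}, exploiting the Markov semigroup of $\zeta^*$, then forces the one-dimensional marginals of $\tilde\zeta^*$ to agree with those of $\zeta^*$ when tested against $\mathcal{D}^{**}$; since $\mathcal{D}^{**}$ is an algebra in $\bar{\mathcal{C}}(S\times\R_+)$ that separates points and vanishes nowhere, Stone-Weierstrass on compacts combined with a tightness argument upgrades this to a measure-determining statement on $\mathcal{P}(S\times\R_+)$, so the full one-dimensional marginals coincide. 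Second, conditioning on $\mathcal{F}_s$ shows that $(\tilde\zeta^*_{s+u})_{u\geq 0}$ is again a solution of the MP started from $\tilde\zeta^*_s$, and by the previous substep its conditional law agrees with the Markov law of $\zeta^*$ started from $\tilde\zeta^*_s$. Hence $\tilde\zeta^*$ is Markov with the same transition semigroup and same initial distribution as $\zeta^*$, so $\tilde\zeta^*\distributioneq\zeta^*$ on $D_{S\times\R_+}[0,\infty)$. Projecting onto the $S$-coordinate yields the desired well-posedness of $(G,\mathcal{D}(G),\mathbf{P}_0)$.

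\textbf{Main obstacle.} The delicate step will be the Laplace-transform argument for uniqueness of one-dimensional marginals: one needs the range of $\lambda-G^*$ (for $\lambda>0$) to be rich enough to invert the transform against a measure-determining class. Because $S\times\R_+$ is noncompact and $\mathcal{D}^{**}$ is only known to separate points, some care is required to promote these properties into a genuine measure-determining statement and to ensure $\mathcal{D}^{**}$ behaves like a core for the semigroup of the Markov solution. All of this is handled by the standard machinery of Chapter 4 of \cite{EK86-book} once the space-time reduction has been made.
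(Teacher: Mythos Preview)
Your proposal is correct and follows essentially the same route as the paper: reduce to the time-homogeneous space-time martingale problem $(G^*,\mathcal{D}^{**},\mathbf{P}_0^*)$, use that the Markov solution $\zeta^*$ provides the needed resolvent/range condition, and conclude uniqueness because the algebra $\mathcal{D}^{**}\subset\bar{\mathcal{C}}(S\times\R_+)$ separates points and is therefore measure-determining. The paper is slightly more economical in that it cites Theorem~4.4.1 of \cite{EK86-book} directly (after noting dissipativity and $\mathcal{R}(\lambda-G'')=\overline{\mathcal{D}^{**}}$), whereas you sketch the Laplace-transform-plus-conditioning argument that underlies that theorem; but the logic is the same.
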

\begin{remark}
The lemma remains true if $S$ is a separable metric space.
\end{remark}
\begin{proof}
Without loss of generality, we assume $\mathcal{D}(G)$ is an algebra, separating points. Otherwise, we prove the theorem for the subalgebra of $\mathcal{D}(G)$ with this property, which has at least one Markov solution, and hence the uniqueness of the latter implies the uniqueness of the original martingale problem. Let a Markov process $\zeta=(\zeta_t)_{t\geq 0}$ be a solution to the $(G,\mathcal{D}(G),\mathbf{P}_0)$ martingale problem. Then by discussion before the above lemma, the Markov space-time process $\zeta^*=(\zeta_t^*)_{t\geq 0}$ defined by $\zeta^*_t=(\zeta_t,t)$ is a solution to the martingale problem $(G^*,\mathcal{D}^{**},\mathbf{P}_0^*)$ (equivalently, $(G^*,\mathcal{D}^{**},\mathbf{P}_0^*)$), where $G^*$, $\mathcal{D}^{**}$ and $\mathbf{P}_0^*$ are defined as before. Therefore, it is sufficient to prove that $(\zeta_t^*)_{t\geq 0}$ is the unique solution to this martingale problem. 
Since $G''$ is the infinitesimal generator of $\zeta^*$ restricted to the domain $\mathcal{D}^{**}$, it is dissipative and there is $\lambda >0$ such that $\mathcal{R}(\lambda-G'')=\overline{\mathcal{D}(G'')}=\overline{\mathcal{D}^{**}}$. Thus, by theorem $4.4.1$ in \cite{EK86-book}, we need only to show that $\mathcal{D}^{**}$ is measure-determining. But $\mathcal{D}^{**}\subset \bar{\mathcal{C}}(S\times\R_+)$ and the algebra $\mathcal{D}^{**}$ separates points. The latter follows from the fact that both $\mathcal{D}(G)$ and $\mathcal{C}_c^1([0,\infty),\R_+)$ separate points. Hence by Theorem 3.4.5 \cite{EK86-book} $\mathcal{D}^{**}$ is measure-determining. This finishes the proof.
\end{proof}
In fact, we can see that the uniqueness of one-dimensional distributions of solutions of a martingale problem $(G,\mathcal{D},\textbf{P}_0)$ guarantees the uniqueness of the finite-dimensional distributions which, in turn, implies uniqueness of the martingale problem. Another important fact  about martingale formulation is that any unique solution of a martingale problem $(G,\mathcal{D},\textbf{P}_0)$ is strongly Markovian. The following is a restatement of Theorem $4.4.2$ and Corollary $4.4.3$ (Ethier and Kurtz 1986 \cite{EK86-book}) in the case of time-inhomogeneous martingale problems.
\begin{proposition}\label{Proposition-uniqueness-one dimensional distributions}
Let $S$ be a separable metric space, and let $G:\R_+\times \mathcal{B}(S)\rightarrow \mathcal{B}(S)$ be a time-dependent linear operator. If the one-dimensional distributions of any two possible solutions $\zeta^{(1)}$ and $\zeta^{(2)}$ of the martingale problem $(G,\mathcal{D},\textbf{P}_0)$ (with sample paths in $\mathcal{C}([0,\infty),S)$, respectively) coincide, i.e. if
\begin{equation}
\mathbb{P}(\zeta^{(1)}(t)\in A)=\mathbb{P}(\zeta^{(2)}(t)\in A)
\end{equation}
for any $t\geq 0$ and $A \in \mathscr{B}(S)$, then the martingale problem $(G,\mathcal{D},\textbf{P}_0)$ has at most one solution (with sample paths in $\mathcal{C}([0,\infty),S)$, respectively). In the case of existence, the solution is a Markov process. In addition to the above assumptions, if $G$ is a linear time-dependent operator satisfying $G:\R_+\times \bar{\mathcal{C}}(S)\rightarrow \mathcal{B}(S)$, then in the case of existence, the unique solution of $(G,\mathcal{D},\textbf{P}_0)$, namely $\zeta$, is strongly Markov, i.e. for any a.s. finite stopping time $\tau$ (with respect to the canonical filtration of $\zeta$, namely $\{\mathcal{F}_t^{\zeta}\}_{t\in \R_+})$
\begin{equation}
\mathbb{E}_{\mathbb{P}\zeta^{-1}}[f(\zeta(\tau+t)|\mathcal{F}_{\tau}^{\zeta}]=\mathbb{E}_{\mathbb{P}\zeta^{-1}}[f(\zeta(\tau+t)|\zeta(\tau)]
\end{equation}
\end{proposition}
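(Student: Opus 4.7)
My plan is to reduce to the time-homogeneous setting via the space-time lift $\zeta^*_t=(\zeta_t,t)$ already introduced above Lemma~\ref{constructive uniqueness of martingale problem}, and then invoke the time-homogeneous versions of Theorem $4.4.2$ and Corollary $4.4.3$ of \cite{EK86-book} applied to the lifted martingale problem $(G^*,\mathcal{D}^{**},\mathbf{P}_0^*)$ on the separable metric space $S\times\R_+$.

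First I would observe that if $\zeta^{(1)},\zeta^{(2)}$ are any two solutions of $(G,\mathcal{D},\mathbf{P}_0)$, then their lifts $\zeta^{*(i)}$ are solutions of $(G^*,\mathcal{D}^{**},\mathbf{P}_0^*)$ by the equivalence recorded just above Lemma~\ref{constructive uniqueness of martingale problem}. Since the time coordinate of $\zeta^*$ is deterministic, agreement of the one-dimensional marginals of $\zeta^{(1)}$ and $\zeta^{(2)}$ at every $t\geq 0$ is equivalent to agreement of those of $\zeta^{*(1)}$ and $\zeta^{*(2)}$. The time-homogeneous Theorem $4.4.2$ of \cite{EK86-book} then yields that $\zeta^{*(1)}$ and $\zeta^{*(2)}$ have the same finite-dimensional distributions, hence the same law on $D_{S\times\R_+}[0,\infty)$; pushing forward through the projection $(x,t)\mapsto x$ gives $\mathbb{P}(\zeta^{(1)})^{-1}=\mathbb{P}(\zeta^{(2)})^{-1}$. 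The $\mathcal{C}([0,\infty),S)$-path version is identical, since continuous paths in $S$ lift to continuous paths in $S\times\R_+$ and the same theorem of \cite{EK86-book} covers the continuous-path case.

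Next, Theorem $4.4.2$ of \cite{EK86-book} also asserts that a uniquely solvable time-homogeneous martingale problem is solved by a Markov process. Applied to the (now unique) $\zeta^*$, this yields that $\zeta^*$ is Markov on $S\times\R_+$; because $\mathcal{F}_s^{\zeta^*}=\mathcal{F}_s^{\zeta}$ (the extra coordinate being deterministic) and $\zeta_t$ is a measurable function of $\zeta_t^*$, this translates verbatim into the time-inhomogeneous Markov property for $\zeta$.

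For the strong Markov statement I would use that the extra hypothesis $G:\R_+\times\bar{\mathcal{C}}(S)\to\mathcal{B}(S)$ forces $\mathcal{D}(G)\subset\bar{\mathcal{C}}(S)$, hence by construction $\mathcal{D}^{**}\subset\bar{\mathcal{C}}(S\times\R_+)$, and the algebra $\mathcal{D}^{**}$ was already shown in the proof of Lemma~\ref{constructive uniqueness of martingale problem} to separate points and vanish nowhere. Corollary $4.4.3$ of \cite{EK86-book} then applies to $(G^*,\mathcal{D}^{**},\mathbf{P}_0^*)$, giving that $\zeta^*$ is strongly Markov. Any $\{\mathcal{F}_t^\zeta\}$-stopping time $\tau$ is also an $\{\mathcal{F}_t^{\zeta^*}\}$-stopping time, and writing $f(\zeta_{\tau+t})$ as a function of $\zeta^*_{\tau+t}$ transfers the strong Markov identity from $\zeta^*$ to $\zeta$. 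I do not expect any serious obstacle here: all the analytic content has already been invested in the space-time lifting and in the corresponding time-homogeneous theorems of \cite{EK86-book}, so the only real task is the bookkeeping needed to confirm that the $\sigma$-algebras, conditional expectations, and path-space topologies line up between $\zeta$ and $\zeta^*$.
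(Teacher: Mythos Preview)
Your space-time reduction is correct and is a clean way to obtain the result; the paper's own proof is a one-liner that simply says the time-homogeneous argument of Theorem~4.4.2 and Corollary~4.4.3 in \cite{EK86-book} goes through verbatim in the inhomogeneous setting. So where the paper would have you rerun those proofs with an extra time variable floating around, you instead lift once to $(G^*,\mathcal{D}^*,\mathbf{P}_0^*)$ and apply the homogeneous theorems as black boxes. Both routes are standard; yours is a bit tidier since nothing has to be re-proved.

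One small correction: you should work with $\mathcal{D}^*$ rather than $\mathcal{D}^{**}$. The extension to $\mathcal{D}^{**}$ in the discussion before Lemma~\ref{constructive uniqueness of martingale problem}, and the claim that $\mathcal{D}^{**}$ separates points, were made under the extra hypothesis that $\mathcal{D}(G)$ contains an algebra separating points and vanishing nowhere, which Proposition~\ref{Proposition-uniqueness-one dimensional distributions} does \emph{not} assume. Fortunately you do not need $\mathcal{D}^{**}$ at all: the equivalence between $(G,\mathcal{D},\mathbf{P}_0)$ and $(G^*,\mathcal{D}^*,\mathbf{P}_0^*)$ already suffices for the uniqueness and Markov parts, and for the strong Markov part Corollary~4.4.3 of \cite{EK86-book} only requires $\mathcal{D}^*\subset\bar{\mathcal{C}}(S\times\R_+)$, which follows directly from the added hypothesis $\mathcal{D}(G)\subset\bar{\mathcal{C}}(S)$. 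The separating-points claim is neither available nor needed here.
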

\begin{proof}
The same argument as the one used in the proof of Theorem $4.4.2$ and Corollary $4.4.3$ (Ethier and Kurtz 1986 \cite{EK86-book}) (in homogeneous martingale problem case) proves the proposition.
\end{proof}

To have the uniqueness for a time-inhomogeneous martingale problem $(G, \mathcal{D},\mathbf{P}_0)$, it is necessary and sufficient that the one-dimensional distributions of any two possible solutions of the martingale problem coincide. The uniqueness of the one-dimensional distributions concludes that every solution is a Markov process, and, hence, it implies the uniqueness of time-dependent semigroups of two solutions. For general theory of martingale problems one can see \cite{SV79-book,EK86-book,D93}.

In the next subsection we develop the martingale problems with stochastic operator-valued processes.
\subsection{Quenched martingale problem in random environment and stochastic operator process}
As we restrict our attention to population dynamics in random time-varying environments (i.e. fitness processes), we are dealing with the Markov processes which are not only inhomogeneous in time but also their generators are random. Therefore, the idea of martingale problem should be extended in order to identify time-inhomogeneous Markov processes in random environments. We first describe what we mean by a stochastic process in a random environment.
\begin{definition}\label{quenched process}
Let $S'$ be a Polish space and let $\{\zeta^x\}_{x\in S'}$ be a family of $S$-valued measurable stochastic processes with laws $\{\textbf{P}^x\}_{x\in S'}$ all defined on the Borel probability space $(\Omega, \mathbb{P}, \mathcal{F})$ and with sample paths in $D_S[0,\infty)$ a.s.. As the supports of all $\{\textbf{P}^x\}_{x\in S'}$ are in $D_S[0,\infty)$, we regard these measures as the elements of $\mathcal{P}(D_S[0,\infty))$. Suppose the map $x \mapsto \textbf{P}^{x}(B)$ is measurable for any Borel measurable subset $B$ contained in $D_S[0,\infty)$. Let $X: \Omega \rightarrow S'$ be Borel measurable, i.e. $X$ is an $S'$-valued random variable. The mapping $\zeta^X:\omega\mapsto \zeta^{X(\omega)}(\omega)$ is called an $S$-valued stochastic (annealed) process in random environment $X$ with law $\textbf{P}$ which is the average over all $\textbf{P}^x$, i.e.
\begin{equation}
\textbf{P}(.)=\int\limits_\Omega \textbf{P}^x(.) \mathbb{P}^{env}(dx)
\end{equation}
where $\mathbb{P}^{env}=X*\mathbb{P}=\mathbb{P} X^{-1}$ is the law of $X$. Recall that $X*\mathbb{P}$ is the push-forward measure of $\mathbb{P}$ under the random variable $X$. For any $x \in S'$, $\zeta^x$ is called the quenched process with the given environment $x$.
As $\textbf{P} \in \mathcal{P}(D_S[0,\infty))$ 
, the annealed process has sample paths in $D_S[0,\infty)$.
\end{definition}

The fact that the quenched processes are Markov does not guarantee that the annealed process is Markov.
 Also, $\zeta^X$ need not to be Markov when, for $\mathbb{P}^{env}$-a.e. $x\in S'$, $\zeta^x$ is Markov but $X$ is not so.

We can consider a stochastic process in r.e. from the perspective of its generator which is a random time-varying generator. This hints us to think of the process by keeping the information of random variations for its generator. The following develops this concept.

\begin{definition}
Consider a Banach space $L$ with a linear subset $\mathcal{D}\subset L$ (i.e. $\mathcal{D}$ is closed under vector addition and scalar multiplication) and denote by $\mathscr{L}(\mathcal{D},L)$ the set of all bounded linear operators with domain $\mathcal{D}$ on $L$, equipped with the operator norm. By a linear operator process on $L$ with the domain $\mathcal{D}\subset L$ we mean an $\mathscr{L}(\mathcal{D},L)$-valued stochastic process, i.e. a mapping $G:\Omega \times \R_+ \rightarrow \mathscr{L}(\mathcal{D},L)$ such that $G(.,t)$ is a measurable function for any $t\in \R_+$ and $G(\omega,t)$ is a bounded linear operator on $L$ with domain $\mathcal{D}$ for any $t\in \R_+$ and $\omega \in \Omega$. A measurable linear operator process $G$ is a linear operator process for which $G$ is a measurable function. We, interchangeably, denote $G$ either as above, or as a function from $\Omega \times \R_+ \times \mathcal{D}$ to $L$, i.e. for any $f\in \mathcal{D}$ we set $G(\omega,t,f)=G(\omega,t)f$. Also, by $(G_t)_{t\geq 0}$ we denote a general linear operator process.
\end{definition}
As we deal with only  linear operators, we call the process defined above an "operator process".
\begin{definition}\label{Definition-operator process}
For an $S'$-valued random variable $X:\Omega\rightarrow S'$, we say an operator process $G:\Omega \times \R_+ \rightarrow \mathscr{L}(\mathcal{D},L)$ is consistent with $X$ (or with its law $\mathbb{P}X^{-1}\in \mathcal{P}(S')$) if for any $t\in \R_+$ the function $G(.,t)$ is constant on every measurable pre-image $X^{-1}(x)$ for $x \in supp(\mathbb{P}X^{-1})$. In this case we set $G(X^{-1}(x),s):=G(\omega,s)$ for an arbitrary $\omega\in  X^{-1}(x)$.
\end{definition}
Let $S'$ be a Polish space and $\mathcal{D}$ be a linear subspace of $\mathcal{B}(S)$. Let a probability measure $\mathbb{P}^{env} \in \mathbb{P}(S')$ be the distribution of an $S'$-valued random variable $X:\Omega\rightarrow S'$, i.e. $\mathbb{P}^{env}=\mathbb{P}X^{-1}$. Consider the operator process $G:\Omega \times \R_+ \times \mathcal{D} \rightarrow \mathcal{B}(S)$ which is consistent with $X$. We identify a time-inhomogeneous martingale problem in random environment (r.e.), or a quenched martingale problem in r.e. $X$, with a quadruple $(G, \mathcal{D}, \textbf{P}_0, \mathbb{P}^{env})$ where $\textbf{P}_0: S' \rightarrow \mathcal{P}(S)$ is measurable. From now on, when we speak of a quenched martingale problem $(G, \mathcal{D}, \textbf{P}_0, \mathbb{P}^{env})$, we automatically assume that $G$ is consistent with $\mathbb{P}^{env}$.
\begin{definition} \textit{(Quenched martingale problem in r.e.)}\label{Quenched martingale problem in r.e.}
Let $X$ be an $S'$-valued random variable with law $\mathbb{P}^{env}$. An $S$-valued stochastic process in r.e. $X$, namely $\zeta^X$, with the family of quenched laws $\{\textbf{P}^x\}_{x \in S'}$ and initial distributions $\{\textbf{P}_0(x)\}_{x\in S'}$ is said to be a solution of the quenched martingale problem $(G,\mathcal{D},\textbf{P}_0, \mathbb{P}^{env})$ if
for any $f \in \mathcal{D}$
\begin{equation}
f(\zeta_t^x)-\int\limits_0^t G_x(s) f(\zeta_s^x) ds
\end{equation} 
is a $\textbf{P}^x$-martingale with respect to the canonical filtration, for $\mathbb{P}^{env}$-almost all $x\in S'$, where
\begin{equation}
G_x(s)=G(X^{-1}(x),s)
\end{equation}
In this case, we also say $\{\textbf{P}^x\}_{x\in S'}$ is a solution to the quenched martingale problem. We say $\{\textbf{P}^x\}_{x\in S'}$ is a general solution to the martingale problem, if there exists $U\in \mathscr{B}(S')$ with $\mathbb{P}^{env}(U)>0$ such that for any $x\in U$ the support of $\textbf{P}^x$ is not in $D_S[0,\infty)$. We say the martingale problem is well-posed if there is a unique $\zeta^X$ solution with these properties (general solutions are not considered in the definition of uniqueness), i.e. there exists a unique ($\mathbb{P}^{env}$-a.s. uniquely determined) family $\{\textbf{P}^x\}_{x\in S'}$ satisfying the above conditions.
\end{definition}

\begin{remark}
In fact, each quenched martingale problem in r.e. $(G,\mathcal{D},\textbf{P}_0, \mathbb{P}^{env})$, $\mathbb{P}^{env}$-a.s. uniquely determines an $S'$-indexed family of time-inhomogeneous martingale problems $\{(G_x,\mathcal{D},\textbf{P}_0(x))\}_{x\in S'}$ along with the environment probability measure $\mathbb{P}^{env}$, and vice versa. The problems of existence and uniqueness of the first are equivalent to the problems of existence and uniqueness of the second family $\mathbb{P}^{env}$-a.s..
\end{remark}
Another process drawing our attention is a joint stochastic process and its environment. 
\begin{definition}
Suppose $\zeta^X$ be as defined in Definition \ref{quenched process}, i.e. a stochastic process in random environment $X$, with an extra assumption $S'=D_{\mathbf{E}}[0,\infty)$ for a Polish space $\mathbf{E}$. 
The random variable $X$ can be regarded as an $\mathbf{E}$-valued stochastic process with sample paths in $S'$. For $t\in \R_+$ and $\omega\in \Omega$, define the joint $S\times \mathbf{E}$-valued stochastic process by $\zeta_t(\omega)=(\zeta_t^{X(\omega)}(\omega),X_t(\omega))$ and call it the joint annealed-environment process. In fact for each $\omega\in \Omega$, $\zeta(\omega)$ gives a trajectory of environment and a trajectory of the process in that environment.
\end{definition}

Having the law of the joint process $\zeta$, how can we retrieve the law of $\zeta^X$? Let $\textbf{P}^*$ be the law of $\zeta$ and $\mathbb{P}^{env}$ be the law of $X$. Since $S$ and $S'$ are Polish, by disintegration theorem, there exists a unique family of probability measures $\{\textbf{P}^{*x}\}_{x\in S'}\subset \mathcal{P}(D_S[0,\infty)\times S')$ (unique w.r.t. $\mathbb{P}^{env}$, that is for any such family $\{\textbf{P}^{**x}\}_{x\in S'}$, $\mathbb{P}^{env}(\{x\in S': \textbf{P}^{*x}=\textbf{P}^{**x}\})=1$) satisfying
\begin{enumerate}[(i)]
\item The map $x \mapsto \textbf{P}^{*x}(B)$ is Borel measurable for any Borel measurable set $B \subset D_S[0,\infty)\times S'$.
\item $\textbf{P}^{*x}((D_S[0,\infty)\times S') \setminus Y_x)=0$ for $\mathbb{P}^{env}$-almost all $x \in S'$ where $Y_x:=\{(s,x)\}_{s\in D_S[0,\infty)}$. 
\item For any Borel measurable subset $B$ of $D_S[0,\infty)\times S'$ we have 
\begin{equation}
\textbf{P}^*(B)=\int\limits_{S'} \textbf{P}^{*x}(B) \mathbb{P}^{env}(dx).
\end{equation}
\end{enumerate}
Then, for $\mathbb{P}^{env}$-a.e. $x\in S'$, $\textbf{P}^x$ will be the push-forward measure of $\textbf{P}^{*x}$ under the measurable projection from $D_S[0,\infty)\times S'$ onto $D_S[0,\infty)$. We also can observe that the annealed measure $\textbf{P}$ is the push-forward measure of $\textbf{P}^*$ under the projection from $D_S[0,\infty)\times S'$ onto $D_S[0,\infty)$, and it can gives another way to construct quenched measures, as they are in fact conditional measures of $\textbf{P}$ and can be derived by disintegration theorem for $\textbf{P}$ and $\mathbb{P}^{env}$. The following diagram summarizes the relations of these measures.
\begin{equation}
\begin{array}{ccc}
\textbf{P}^* & \xrightarrow{push-forward} & \textbf{P}\\
\downarrow &   & \downarrow \\
\{\textbf{P}^{*x}\}_{x\in S'} & \xrightarrow{push-forward} & \{\textbf{P}^{x}\}_{x\in S'}
\end{array}
\end{equation}


\section{Moran and Fleming-Viot processes in random environments: Martingale characterization}
In this section, first, we identify the Moran process in r.e. as a quenched martingale problem and prove its wellposedness, and then we define the generator of the Fleming-Viot (FV) process in r.e. and the quenched martingale problem for it as well. Also, in this section, we state some main results of this paper for Moran and FV processes in r.e.. This includes the wellposedness of the quenched martingale problem for the FV process in r.e., some properties of this process such as continuity of the sample paths almost surely, and the weak convergence of the quenched (and annealed) measure-valued Moran processes to the quenched (and annealed) FV process, when the environments of the first (fitness processes) converge to that of the second. Also, under the assumption of existence of a parent-independent component of the mutation process and certain assumptions for the environment process, we state an ergodic theorem for the annealed-environment process. The proofs of the main theorems will come in sections \ref{Section-proof of convergence of MRE to FVRE and wellposedness}, \ref{Section-Continuity}, and \ref{Section-proof of ergodic theorem}.

Throughout this paper we assume that $I$ is a compact metric space, called type space. Any element of $I$ is called a type or allele. We also assume that $E \subset \mathcal{C}(I,[0,1])$ is compact. A fitness function (or selection intensity function) is a Borel measurable function from $I$ to $[0,1]$. In this paper, we assume that fitness functions are in $C(I,[0,1])$.  

\begin{definition}
A \textit{fitness process} is an $E$-valued measurable stochastic process defined on $(\Omega, \mathbb{P}, \mathcal{F})$ with sample paths in $D_E[0,\infty)$. When the fitness process is Markov, we call it Markov fitness.
\end{definition}
\begin{remark}
Restricting the fitness processes to have sample paths in $D_E[0,\infty)$ is an essential assumption to guarantee the generators of Moran and Fleming-Viot processes in random environments exist for a suitable set of functions.
\end{remark}
Let $e$ be a fitness process. As $E$ is a compact space and therefore separable, $e$ can be regarded as a $D_E[0,\infty)$-valued random variable defined on $(\Omega, \mathbb{P}, \mathcal{F})$, that is $e:\Omega \rightarrow D_E[0,\infty)$ be a measurable map. We denote by
\begin{equation}
\mathbb{P}^{env}=\mathbb{P}^{env,e}:=e*\mathbb{P}=\mathbb{P}e^{-1}\in \mathcal{P}(D_E[0,\infty))
\end{equation}
the distribution of $e$. For simplicity of notation, we let $e_t=e(t)$ for a fitness process $e$. We frequently denote by $e=(e_t)_{t\geq 0}$ a fitness process and by $\tilde{e}\in D_E[0,\infty)$ a trajectory of $e$. Also, we denote by $\hat{e}\in E$ a fitness function. We emphasise that, in the sequel, a fitness process $e$ is regarded as both an $E$-valued measurable stochastic process with sample paths in $D_E[0,\infty)$ and a $D_E[0,\infty)$-valued random variable with the law $\mathbb{P}^{env}$. We assume that the possible times of selection occur with rate $\alpha/N$ independently for every individual, and at a possible time $t$ of selection for individual $i\leq N$, a selective event occurs with probability $e_t(Y_i^{N,e}(t))$. Recall that $Y_i^{N,e}(t)$ is the type of individual $i$ at time $t$, in the particle Moran process in random environment (PMRE) with $N$ individuals. For the results in this paper we assume that the fitness process is either a general $E$-valued stochastic process or a Markov process. 
 We continue this section with identifying the Moran process in r.e. (MRE) as a solution of a quenched martingale problem in r.e..
\subsection{Moran process in random environments}
By a Moran process we think of the measure-valued Moran process with resampling, mutation and selection with a compact type space $I$ and an $E$-valued fitness process as constructed in introduction in detail. Recall that $E$ is compact in this paper. For $N\in \N$, let $\mathcal{P}^N(S)$ be the set of all purely atomic probability measures in $\mathcal{P}(S)$ with at most $N$ atoms such that $Nm(.)$ is a counting measure.
In other words, $\mathcal{P}^N(S)$ is the image of $I^N$ under the map
\begin{equation}
(a_1,...,a_N) \mapsto \frac{1}{N}\sum\limits_{i=1}^N \delta_{a_i}
\end{equation}
from $I^N$ to $\mathcal{P}(S)$ where $\delta_a$ is the delta measure with support $\{a\}\subset S$. An element of $\mathcal{P}^N(S)$ is called an empirical measure on $S$ (with at most $N$ atoms). In this section we assume that the number of individuals $N\geq 1$ is fixed. With $N$ individuals and type space $I$, let $(\mu_N^e(t))_{t\in \R_+}$ be a measure-valued ($\mathcal{P}^N(I)$-valued) Moran process with fitness process $e$ whose law is given by $\mathbb{P}^{env}=\mathbb{P}^{env,e} \in \mathcal{P}(D_E[0,\infty))$ and let $\alpha/N, \beta, \gamma/2>0$ be the selection, mutation, and resampling rates, respectively. We assume that the fitness process $e$  evolves between jumps and is independent of the Poisson times of jumps (for resampling, mutation and selection), the initial distribution, and also of the mutation kernel, i.e. it is independent of the outcome of a mutation event that occurs on type $a$ for every $a\in I$. Let $q(x,dy)$ be a stochastic kernel for the mutation process on the state space $I$, that is the type of the offspring of an individual with type (allele) $x$ after a mutation event follows the transition function $q(x,dy)$. As $q(x,dy)$ can either depend on $x$ or not, it is always possible to write the mutation kernel as
\begin{equation}\label{mutation kernel}
\beta q(x,dy)=\beta' q'(dy)+\beta '' q''(x,dy)
\end{equation}
for $\beta', \beta'' \geq 0$ s.t. $\beta'+\beta''=\beta>0$. The first term in the right hand side of equation (\ref{mutation kernel}) is called parent-independent component of the mutation event. When there is no ambiguity in the notation and the fitness process is known, we drop the superscript $e$ and denote MRE with the fitness function $e$ by $\mu_N$. Also we denote by $\mu_N^{\tilde{e}}$ the quenched Moran process with the deterministic fitness process $\tilde{e} \in D_E[0,\infty)$.

To study $\mu_N$ as a quenched martingale problem in r.e. $e$, we need to determine a convenient set of functions as the domain of its generator. We use the following domain for the generator of $\mu_N$ which has been used by several authors as a domain for the generator of the classic measure-valued Moran process. For an empirical measure $m\in \mathcal{P}^N(I)$, let $m^{(N)} \in \mathcal{P}(I^N)$ be the $N$ times sampling measure without replacement from $m$, i.e. letting $da=d(a_1,...,a_N)$
\begin{equation}\label{Convolution power-without replacement}
m^{(N)}(da)=m(da_1)\times \frac{Nm-\delta_{a_1}}{N-1}(da_2) \times ...\times \frac{Nm-\sum_{i=1}^{N-1}\delta_{a_i}}{1}(da_N)
\end{equation}

Let $\tilde{\mathfrak{F}}_N$ be the algebra generated by all functions $\tilde\Phi_N^f:\mathcal{P}^N(I) \rightarrow \R$ for $f\in \mathcal{C}(I^N)$ with
\begin{equation}
\tilde\Phi_N^f(m)=<m^{(N)},f>=\int\limits_{I^N} fdm^{(N)}.
\end{equation}
Note that $\bar{\mathcal{C}}(I)=\mathcal{C}(I)$, $\bar{\mathcal{C}}(I^N)=\mathcal{C}(I^N)$, $\bar{\mathcal{C}}(\mathcal{P}(I))=\mathcal{C}(\mathcal{P}(I))$ and $\bar{\mathcal{C}}(\mathcal{P}^N(I))=\mathcal{C}(\mathcal{P}^N(I))$, since $I$ and therefore $\mathcal{P}(I)$ and $\mathcal{P}^N(I)$ are compact. Also note that any function in $\tilde{\mathfrak{F}}_N$ is a restriction of a function in $\mathcal{C}(\mathcal{P}(I))$. 
\begin{proposition}\label{Proposition-algebra separates points-Moran}
For any $N\in \N$, the algebra $\tilde{\mathfrak{F}}_N$ separates points, and hence is measure and convergence-determining on $\bar{\mathcal{C}}(\mathcal{P}^N(I))$. Also $\tilde{\mathfrak{F}}_N$ vanishes nowhere.
\end{proposition}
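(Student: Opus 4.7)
The plan is to show that $\tilde{\mathfrak{F}}_N$ is a subalgebra of $\mathcal{C}(\mathcal{P}^N(I))$ that contains the constants and separates points, from which measure- and convergence-determination follow by Stone--Weierstrass together with Lemma $3.4.3$ of \cite{EK86-book}.

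First I would unpack the definition (\ref{Convolution power-without replacement}). Writing $m=\tfrac{1}{N}\sum_{i=1}^N\delta_{a_i}\in\mathcal{P}^N(I)$ (with repetitions allowed), the iterated sampling-without-replacement formula collapses to
$$m^{(N)}=\frac{1}{N!}\sum_{\sigma\in S_N}\delta_{(a_{\sigma(1)},\ldots,a_{\sigma(N)})},$$
that is, the uniform distribution on the orderings of the atoms of $Nm$. In particular, for any $g\in\mathcal{C}(I)$ the one-variable lift $f(x_1,\ldots,x_N):=g(x_1)\in\mathcal{C}(I^N)$ produces
$$\tilde\Phi_N^f(m)=\frac{1}{N!}\sum_{\sigma\in S_N}g(a_{\sigma(1)})=\frac{1}{N}\sum_{i=1}^N g(a_i)=\langle m,g\rangle.$$
Hence the linear functionals $L_g:m\mapsto\langle m,g\rangle$, $g\in\mathcal{C}(I)$, all lie in $\tilde{\mathfrak{F}}_N$. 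Taking $g\equiv 1$ gives $L_1\equiv 1$, so the constant function $1$ belongs to the algebra, which already yields that $\tilde{\mathfrak{F}}_N$ vanishes nowhere.

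For separation of points, suppose $m\ne m'$ in $\mathcal{P}^N(I)\subset\mathcal{P}(I)$. Since $I$ is compact metric, $\mathcal{C}(I)$ is measure-determining on $\mathcal{P}(I)$ (Riesz representation), so some $g\in\mathcal{C}(I)$ satisfies $\langle m,g\rangle\ne\langle m',g\rangle$; the identity above then shows that $L_g\in\tilde{\mathfrak{F}}_N$ separates $m$ and $m'$.

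Finally, $\mathcal{P}^N(I)$ is compact Hausdorff, being the image of the compact product $I^N$ under the continuous map $(a_1,\ldots,a_N)\mapsto\tfrac{1}{N}\sum_i\delta_{a_i}$ into the Polish space $\mathcal{P}(I)$. Thus $\tilde{\mathfrak{F}}_N\subset\mathcal{C}(\mathcal{P}^N(I))=\bar{\mathcal{C}}(\mathcal{P}^N(I))$ is a point-separating subalgebra containing $1$, so by Stone--Weierstrass it is uniformly dense in $\bar{\mathcal{C}}(\mathcal{P}^N(I))$, which makes it measure-determining; compactness of $\mathcal{P}^N(I)$ combined with Lemma $3.4.3$ of \cite{EK86-book} upgrades this to convergence-determining. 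There is no real obstacle in the argument; the only point worth checking carefully is the reduction of $\tilde\Phi_N^f$ for the one-variable lifts to the linear functionals $L_g$, since this is exactly what lets the whole statement fall into the classical Stone--Weierstrass framework.
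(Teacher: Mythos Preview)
Your proof is correct and follows essentially the same strategy as the paper: both reduce to the one-variable lifts $f(x_1,\ldots,x_N)=g(x_1)$ and use that $\tilde\Phi_N^f(m)=\langle m,g\rangle$, then invoke the standard algebra-separates-points machinery (Theorem~3.4.5 and Lemma~3.4.3 of \cite{EK86-book}). The only cosmetic difference is that the paper exhibits a separating $g$ concretely---choosing an atom $a$ with $m_1(\{a\})\neq m_2(\{a\})$ and a bump function supported near $a$---whereas you appeal directly to the fact that $\mathcal{C}(I)$ is measure-determining; your route is slightly cleaner and avoids the finite-support bookkeeping.
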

\begin{proof}
Let $m_1,m_2 \in \mathcal{P}^N(I)$ such that $m_1\neq m_2$. Let $\mathcal{R}:=supp(m_1)\cap supp(m_2)$. There exists $a\in \mathcal{R}$ such that $m_1(a)\neq m_2(a)$. Since $|\mathcal{R}| \leq 2N$, there exists $r>0$  such that the ball radius $r$ centred at $a$ (w.r.t. the metric of $I$), namely  $B(a,r)$, excludes all the points of $\mathcal{R}$ except $a$. It is clear that there exists a function $f\in \mathcal{C}(I)$ with $f(a)=1$ that vanishes outside of $B(a,r)$. Consider $\tilde{f}\in \mathcal{C}(I^N)$ which depends only on the first variable in $I^N$ and defined by $\tilde{f}(x_1,x_2,...,x_3)=f(x_1)$. Then
\begin{multline}
<m_1^{(N)},\tilde{f}>=<m_1,f>=m_1(a)\neq
 m_2(a)=<m_2,f>=<m_2^{(N)},\tilde{f}>.
\end{multline}
Also, $\tilde{\mathfrak{F}}_N$ vanishes nowhere, since the constant function $1 \in \mathcal{C}(I^N)$ and for any $m\in \mathcal{P}^N(I)$, $<m,1>=1\neq 0$. 
\end{proof}
\begin{remark}
Alternatively, the latter proposition can be proved by showing that $\tilde{\mathfrak{F}}_N$ strongly separates points. For the definition see \cite{EK86-book}, Section $3.4$.
\end{remark}
It is straightforward to see that the generator of the MRE with fitness process $e$ on $\tilde{\mathfrak{F}}_N$ is the operator process $\tilde{\mathcal{G}}^N: \Omega \times \R_+ \times \tilde{\mathfrak{F}}_N \rightarrow \tilde{\mathfrak{F}}_N$ consistent with the environment process $e$ given by
\begin{equation}
\begin{array}{l}
\tilde{\mathcal{G}}^N=\tilde{\mathcal{G}}^{N,e}:=\\
\tilde{\mathcal{G}}^{res,N,e}+\tilde{\mathcal{G}}^{mut,N,e}+\tilde{\mathcal{G}}^{sel,N,e}
\end{array}
\end{equation}
where $\tilde{\mathcal{G}}^{res,N,e}$ and $\tilde{\mathcal{G}}^{mut,N,e}$, i.e. the resampling and mutation generators, are linear operators from $\tilde{\mathfrak{F}}_N$ to $\tilde{\mathfrak{F}}_N$, and $\tilde{\mathcal{G}}^{sel,N,e}$, the selection generator, is an operator process consistent with $e$. We usually drop the superscript $e$, if there is no risk of ambiguity. To be more explicit, let 
\begin{equation}
\bar{I}_k=(\bigcup\limits_{n=k}^\infty I^n) \cup I^{\N}
\end{equation}
for $k\in \N$. For the resampling generator, we have
\begin{equation}\label{equation-resampling generator Moran}
\tilde{\mathcal{G}}^{res,N} \tilde\Phi_N^f(m)= \frac{\gamma}{2} \sum\limits_{i,j=1}^N <m^{(N)},f \circ \sigma_{ij} -f>
\end{equation}
where $\sigma_{ij}: \bar{I}_{i \vee j} \rightarrow \bar{I}_{i \vee j}$ is a map replacing the $j$-th component of $x \in \bar{I}_{i \vee j}$ with the $i$-th one ($i\vee j=max\{i,j\}$). In other words, defining another map $\sigma_j^y: \bar{I}_{j} \rightarrow \bar{I}_{j}$ for $j \in \N$ and $y\in I$ with $\sigma_j^{y}(x):=(x_1,...,x_{j-1},y,x_{j+1},..., x_n)$ if $j\leq n$ and $x=(x_1,..., x_n)$, and with  $\sigma_j^{y}(x):=(x_1,...,x_{j-1},y,x_{j+1},...)$ if $x=(x_1,x_2,...)$, we have $\sigma_{ij}(x)=\sigma_j^{x_i}$ (the reason to define these functions to be so general is to use them later for Fleming-Viot processes).

For mutation, we have
\begin{multline}
\tilde{\mathcal{G}}^{mut,N} \tilde\Phi_N^f(m)=\beta \sum\limits_{i=1}^N <m^{(N)},B_i f-f> \\ =\beta' \sum\limits_{i=1}^N <m^{(N)},B_i' f-f>+\beta'' \sum\limits_{i=1}^N <m^{(N)},B_i'' f-f>,
\end{multline}
where
\begin{equation}
B_i, B_i', B_i'': \mathcal{C}(I^{\N})\cup (\bigcup\limits_{n\geq i} \mathcal{C}(I^{n})) \rightarrow \mathcal{C}(I^{\N})\cup (\bigcup\limits_{n\geq i} \mathcal{C}(I^{n}))
\end{equation}
 are bounded linear operators defined by
\begin{equation}\label{mutation integral-1}
B_i f(x)=\int\limits_I f \circ \sigma_i^y (x)q(x_i,dy),
\end{equation}
\begin{equation}\label{mutation integral-2}
B_i' f(x)=B_i' f(x):=\int\limits_I f \circ \sigma_i^y (x)q'(dy),
\end{equation}
\begin{equation}\label{mutation integral-3}
B_i'' f(x)=B_i'' f(x):=\int\limits_I f \circ \sigma_i^y (x)q''(x_i,dy).
\end{equation}
Note that $B_i, B_i', B_i''$ leaves $\mathcal{C}(I^{\N})$ and, for any $n\in \N$, $\mathcal{C}(I^{n})$ invariant.

To have a generator process consistent with $e$ we first need to specify the time-dependent generator of $\tilde{\mu}_N^{\tilde{e}}$ for any given (quenched) environment $\tilde{e}$. For any $t\geq 0$, let 
\begin{equation}
\tilde{e}_i(t)(x):=\tilde{e}(t)(x_i)
\end{equation}
and, for $x\in I^N$ and $s\geq 0$, define
\begin{equation}
\overrightarrow{e}_i^{t,t+s}(x):=\int\limits_t^{t+s} \tilde{e}_i(r)(x)P_{t,t+s}^*(dr),
\end{equation}
where $P_{t,t+s}^*$ is the distribution of the position of a Poisson point in the interval $[t,t+s]$, conditioned to have exactly one Poisson point in the interval.
 Then the generator process for the selection is
\begin{equation}
\begin{array}{l}
\tilde{\mathcal{G}}_{\tilde{e}}^{sel,N}(t) \tilde\Phi_N^f(m)=\\
\lim\limits_{s\rightarrow 0} \frac{1}{s}(\frac{\alpha}{N}s\sum\limits_{i,j=1}^N<m^{(N)},\overrightarrow{e}_i^{t,t+s}f\circ\sigma_{ij}>+\frac{\alpha}{N}s\sum\limits_{i,j=1}^N<m^{(N)},(1-\overrightarrow{e}_i^{t,t+s})f>\\
+(1-\frac{\alpha}{N}s)\sum\limits_{i,j=1}^N<m^{(N)},f>-\sum\limits_{i,j=1}^N<m^{(N)},f>)\\
=\lim\limits_{s\rightarrow 0} \frac{1}{s}(\frac{\alpha}{N}s\sum\limits_{i,j=1}^N<m^{(N)},\overrightarrow{e}_i^{t,t+s}f\circ\sigma_{ij}-\overrightarrow{e}_i^{t,t+s}f>).
\end{array}
\end{equation}
But $\tilde{e}\in D_E[0,\infty)$ is right-continuous (has a right limit), and hence so is $\tilde{e}_i$ for any $i\leq N$. Therefore for any $x\in I^N$
\begin{equation}
\inf\limits_{t\leq r\leq t+s}\tilde{e}_i(r)\leq \overrightarrow{e}_i^{t,t+s}(x)\leq \sup\limits_{t\leq r\leq t+s}\tilde{e}_i(r), 
\end{equation}
and hence
\begin{equation}
\overrightarrow{e}_i^{t,t+s}(x) \rightarrow \tilde{e}_i(t)(x)
\end{equation} 
as $s\rightarrow 0$, and furthermore $\overrightarrow{e}_i^{t,t+s} \rightarrow \tilde{e}_i(t)$ in the sup-norm topology. This concludes that the generator process for the selection, $\tilde{\mathcal{G}}_{\tilde{e}}^{sel,N}: \R_+ \times \tilde{\mathfrak{F}}_N \rightarrow \tilde{\mathfrak{F}}_N$, is given by
\begin{equation}
\tilde{\mathcal{G}}_{\tilde{e}}^{sel,N}(t) \tilde\Phi_N^f(m)=\frac{\alpha}{N} \sum\limits_{i,j=1}^N <m^{(N)},\tilde{e}_i(t)(f \circ \sigma_{i,j}^N-f)>.
\end{equation}

For simplicity, similarly to the Definition \ref{Definition-operator process}, we denote
\begin{equation}
\tilde{\mathcal{G}}^{sel,N}(e^{-1}(\tilde{e}),t):=\tilde{\mathcal{G}}_{\tilde{e}}^{sel,N}(t).
\end{equation}
Therefore, the selection generator process is the operator process $\tilde{\mathcal{G}}^{sel,N}=(\tilde{\mathcal{G}}^{sel,N}_t)_{t\geq 0}$ or 
\begin{equation}
\tilde{\mathcal{G}}^{sel,N}:\Omega \times \R_+ \times \tilde{\mathfrak{F}}_N \rightarrow \tilde{\mathfrak{F}}_N
\end{equation}
that $\tilde{\mathcal{G}}^{sel,N}(\omega,t)$ is a linear operator from $\tilde{\mathfrak{F}}_N$ to $\tilde{\mathfrak{F}}_N$, defined by
\begin{equation}
\tilde{\mathcal{G}}^{sel,N}(\omega,t)=\tilde{\mathcal{G}}^{sel,N}(e^{-1}(\tilde{e}),t)
\end{equation}
for any $\omega\in e^{-1}(\tilde{e})$ and any $\tilde{e}$ in the range of $e$. Note that the value of $\tilde{\mathcal{G}}^{sel,N}(\omega,t)$ for $\omega \in e^{-1}(\tilde{e})$ with $\tilde{e}$ out of range of $e$ is not important and, actually, it can be any linear operator on $\tilde{\mathfrak{F}}_N$.
Equivalently,
\begin{equation}
\tilde{\mathcal{G}}^{sel,N}(\omega,t) \tilde\Phi_N^f(m)=\frac{\alpha}{N} \sum\limits_{i,j=1}^N <m^{(N)},e_i(\omega,t)(f \circ \sigma_{i,j}^N-f)>,
\end{equation}
where
\begin{equation}
e_i(\omega,t)(x):=e(\omega,t,x_i)=\tilde{\mathcal{G}}^{sel,N}(e^{-1}(\tilde{e}),t).
\end{equation}
Note that, in order to ensure $\tilde{\mathcal{G}}_{\tilde{e}}^{sel,N}(t): \tilde{\mathfrak{F}}_N \rightarrow \tilde{\mathfrak{F}}_N$, $\tilde{e}(t)$ must be in $\mathcal{C}(I,[0,1])$. In fact we have assumed more, i.e. $\tilde{e} \in D_E[0,\infty)$ (recall $E\subset \mathcal{C}(I,[0,1])$). The (quenched) linear generator of the Moran process with a deterministic fitness process $\tilde{e}\in D_E[0,\infty)$, namely $\tilde{\mathcal{G}}_{\tilde{e}}:\R_+ \times \tilde{\mathfrak{F}}_N \rightarrow \tilde{\mathfrak{F}}_N$, is given by
\begin{equation}
\tilde{\mathcal{G}}_{\tilde{e}}^N:=\tilde{\mathcal{G}}^{res,N}+\tilde{\mathcal{G}}^{mut,N}+\tilde{\mathcal{G}}_{\tilde{e}}^{sel,N}.
\end{equation}

\begin{proposition}\label{Proposition-well-posedness of Moran martingale problem}
Let $\tilde{\textbf{P}}_0^N: D_E[0,\infty) \rightarrow \mathcal{P}(\mathcal{P}^N(I))$ be measurable and $\tilde{\mathcal{G}}^N$ be as defined above. The $(\tilde{\mathcal{G}}^N,\tilde{\mathfrak{F}}_N,\tilde{\textbf{P}}_0^N,\mathbb{P}^{env})$-martingale problem is well-posed, and $\mu_N^e$ is identified as the solution of this martingale problem.
\end{proposition}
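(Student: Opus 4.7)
The plan is to reduce the quenched-in-random-environment martingale problem to a $\mathbb{P}^{env}$-almost sure family of deterministic-environment time-inhomogeneous martingale problems, and to dispatch each of the latter by Lemma \ref{constructive uniqueness of martingale problem}. This reduction is legitimate by the remark following Definition \ref{Quenched martingale problem in r.e.}, which asserts that existence/uniqueness of the quenched-in-r.e.\ problem is equivalent to $\mathbb{P}^{env}$-a.s.\ existence/uniqueness of the family $\{(\tilde{\mathcal{G}}^N_{\tilde{e}}, \tilde{\mathfrak{F}}_N, \tilde{\textbf{P}}_0^N(\tilde{e}))\}_{\tilde{e}\in D_E[0,\infty)}$ of ordinary (deterministic-environment) time-inhomogeneous martingale problems.

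For existence, I would fix an arbitrary trajectory $\tilde{e}\in D_E[0,\infty)$ and construct the deterministic-environment Moran process $\mu_N^{\tilde{e}}$ by the standard bounded-rate pure-jump recipe: the resampling, mutation, and possible selection clocks ring at bounded total rates $\gamma N(N-1)/2$, $\beta N$, and $\alpha(N-1)$ respectively, and at each putative jump time $t$ the outcome is sampled using the current (right-continuous) fitness value $\tilde{e}(t)$. The resulting $\mathcal{P}^N(I)$-valued process is a time-inhomogeneous Markov process, and Dynkin's formula for bounded-rate jump processes directly yields that for every $f\in \tilde{\mathfrak{F}}_N$,
\begin{equation}
f(\mu_N^{\tilde{e}}(t)) - \int_0^t \tilde{\mathcal{G}}^N_{\tilde{e}}(s) f(\mu_N^{\tilde{e}}(s)) \, ds
\end{equation}
is a martingale with respect to the canonical filtration, where I use that $\tilde{e}$ is Borel measurable and bounded so the integrand is well-defined. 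Thus $\mu_N^{\tilde{e}}$ is a Markov solution of $(\tilde{\mathcal{G}}^N_{\tilde{e}}, \tilde{\mathfrak{F}}_N, \tilde{\textbf{P}}_0^N(\tilde{e}))$.

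For uniqueness, I would invoke Lemma \ref{constructive uniqueness of martingale problem} on each quenched problem. The state space $\mathcal{P}^N(I)$ is compact (as the continuous image of the compact $I^N$), hence Polish; the domain satisfies $\tilde{\mathfrak{F}}_N \subset \mathcal{C}(\mathcal{P}^N(I)) = \bar{\mathcal{C}}(\mathcal{P}^N(I))$ and is an algebra that separates points by Proposition \ref{Proposition-algebra separates points-Moran}; and a Markov solution exists by the previous step. All the hypotheses of the lemma are satisfied, so the time-inhomogeneous martingale problem $(\tilde{\mathcal{G}}^N_{\tilde{e}}, \tilde{\mathfrak{F}}_N, \tilde{\textbf{P}}_0^N(\tilde{e}))$ is well-posed for every $\tilde{e}\in D_E[0,\infty)$, and in particular for $\mathbb{P}^{env}$-a.e.\ $\tilde{e}$.

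The only point requiring some care, and the main technical obstacle, is to lift this pathwise well-posedness to identify $\mu_N^e$ as a genuine solution of the quenched-in-r.e.\ problem in the sense of Definition \ref{Quenched martingale problem in r.e.}: this needs the Borel measurability of the map $\tilde{e}\mapsto \mathbb{P}(\mu_N^{\tilde{e}})^{-1}\in \mathcal{P}(D_{\mathcal{P}^N(I)}[0,\infty))$ so that the quenched laws form a valid stochastic process in random environment. I would handle this by performing the above pathwise construction on a single probability space, using a common realization of Poisson point processes together with independent uniform marks governing the mutation kernel and the selection Bernoulli choices, all independent of $e$; then $\tilde{e}\mapsto \mu_N^{\tilde{e}}$ is manifestly measurable, substituting $\tilde{e}=e(\omega)$ produces $\mu_N^e$ with the required quenched-annealed decomposition, and the $\mathbb{P}^{env}$-a.s.\ quenched uniqueness established above gives uniqueness of $\mu_N^e$ as the solution of the quenched-in-r.e.\ martingale problem.
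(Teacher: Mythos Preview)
Your proposal is correct and follows essentially the same approach as the paper: reduce to the family of quenched time-inhomogeneous problems, note that the constructed Moran process $\mu_N^{\tilde e}$ is a Markov solution, and conclude uniqueness via Lemma~\ref{constructive uniqueness of martingale problem} together with Proposition~\ref{Proposition-algebra separates points-Moran}. The paper's proof is terser and does not spell out the measurability of $\tilde e\mapsto \mathbb{P}(\mu_N^{\tilde e})^{-1}$, which you correctly flag and resolve by a common-randomness coupling; this is a useful addition rather than a different route.
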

\begin{proof}
The existence has been shown by construction. Note that the constructed quenched solutions $\mu_N^{\tilde{e}}$, for every $\tilde{e}\in D_E[0,\infty)$, is also Markov. It suffices to prove that, for any $\tilde{e}\in D_E[0,\infty)$, the time-inhomogeneous martingale problem $(\tilde{\mathcal{G}}_{\tilde{e}}^N, \tilde{\mathfrak{F}}_N,\tilde{\textbf{P}}_0^N(\tilde{e}))$ is well-posed
. But this follows from Lemma \ref{constructive uniqueness of martingale problem} and the fact that the algebra $\tilde{\mathfrak{F}}_N$ separates points.
\end{proof}
\begin{remark}
The latter proposition can also be proved using the duality method in the same way that we show the uniqueness of the quenched martingale problem for Fleming-Viot process in r.e.. See sections \ref{Section-general duality} and \ref{Section-FV Duality}.
\end{remark}
\subsection{Fleming-Viot process in random envirnoments}
Identifying MRE as a solution of a well-posed quenched martingale problem, we prove that the FVRE process arises as the weak limit (in $D_{\mathcal{P}(I)}[0,\infty)$) of MRE processes with $N$ individuals as $N\rightarrow \infty$. In fact, we prove the following stronger weak convergence. For a sequence of fitness processes $e_N$ converging weakly to a fitness process $e$, in $D_E[0,\infty )$, FVRE process with the fitness process $e$, namely $\mu^e$, arises as the weak limit of MRE processes $\mu_N^{e_N}$ in $D_{\mathcal{P}(I)}[0,\infty)$. The first step to prove this kind of theorems is to introduce the FVRE martingale problem. Here we set up the quenched martingale problem for FVRE.

Let $\mathcal{B}_n(I^\N)$, $\mathcal{C}_n(I^\N)$, and $\mathcal{C}_n^k(I^\N)$ be the subsets of $\mathcal{B}(I^\N)$, $\mathcal{C}(I^\N)$, and $\mathcal{C}^k(I^\N)$, respectively, depending on the first $n$ variables of $I^\N$.
\begin{definition}
For $f \in \mathcal{B}_n(I^{\N})$, a polynomial is a function 
\begin{equation}
\tilde\Phi^f=\tilde\Phi: \mathcal{P}(I) \rightarrow \R
\end{equation}
defined by
\begin{equation} \label{polynomial}
\tilde\Phi^f(m):=<m^{\otimes \N}, f> \ \ for \ m \in \mathcal{P}(I),
\end{equation}
where $m^{\otimes \N}$ is the $\N$-fold product measure of $m$. The smallest number $n$ for which (\ref{polynomial}) holds is called the degree of $\tilde\Phi^f$. 
\end{definition}

Let
\begin{equation}
\tilde{\mathfrak{F}}^k=\{\tilde\Phi^f: f\in \bar{\mathcal{C}}_n^k(I^{\N}) \ for \ some \ n\in\N\}
\end{equation}
for $k\in \N\cup\{0,\infty\}$, and let $\tilde{\mathfrak{F}}=\tilde{\mathfrak{F}}^0$. 
\begin{proposition}\label{Proposition-algebra separates points-FV}
For $k\in \N\cup \{0,\infty\}$, $\tilde{\mathfrak{F}}^k$ is an algebra of functions that separates points and vanishes nowhere, therefore it is measure and convergence-determining.
\end{proposition}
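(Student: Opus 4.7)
The plan is to verify directly that $\tilde{\mathfrak{F}}^k$ is an algebra, separates points and vanishes nowhere, and then to obtain the measure- and convergence-determining properties essentially for free from Stone--Weierstrass plus the compactness of $\mathcal{P}(I)$.

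\textbf{Algebra structure.} Given $\tilde\Phi^f, \tilde\Phi^g \in \tilde{\mathfrak{F}}^k$ with $f \in \bar{\mathcal{C}}_n^k(I^{\N})$ and $g \in \bar{\mathcal{C}}_m^k(I^{\N})$, linear combinations are again polynomials after viewing both $f$ and $g$ as depending on the first $n\vee m$ variables. For products, the key identity is
\begin{equation}
\tilde\Phi^f(m)\,\tilde\Phi^g(m)=\langle m^{\otimes\N}, f\otimes_{\mathrm{sh}} g\rangle,
\end{equation}
where $(f\otimes_{\mathrm{sh}} g)(x_1,x_2,\ldots):=f(x_1,\ldots,x_n)\,g(x_{n+1},\ldots,x_{n+m})$; this lies in $\bar{\mathcal{C}}_{n+m}^k(I^{\N})$ because $I^\N$ has product structure and the two factors depend on disjoint coordinates. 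Vanishing nowhere is immediate since $1\in \bar{\mathcal{C}}_1^k(I^\N)$ yields $\tilde\Phi^1\equiv 1$.

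\textbf{Separation of points.} For distinct $m_1,m_2 \in \mathcal{P}(I)$, since $\mathcal{C}(I)$ separates probability measures on the compact metric space $I$, there exists $f\in\mathcal{C}(I)$ with $\langle m_1,f\rangle\neq \langle m_2,f\rangle$. Regarded as a function in $\bar{\mathcal{C}}_1^0(I^\N)$ depending only on the first coordinate, this gives $\tilde\Phi^f(m_1)\neq \tilde\Phi^f(m_2)$ and settles the case $k=0$. For $k\geq 1$ (and $k=\infty$), one approximates $f$ uniformly by a function in $\mathcal{C}^k(I)$ -- available through the smooth structure under which $\mathcal{C}^k(I)$ is taken to be defined in the paper -- and the resulting perturbation of the inequality still separates $m_1$ from $m_2$. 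This is the step that is slightly delicate: it is the only place where one needs more than bare continuity on $I$, and it reduces to the density of $\mathcal{C}^k(I)$ in $\mathcal{C}(I)$.

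\textbf{Measure- and convergence-determining.} Since $I$ is compact, $\mathcal{P}(I)$ is a compact metric space in the weak topology, and each $\tilde\Phi^f$ with $f\in \bar{\mathcal{C}}_n^k(I^\N)$ is continuous on $\mathcal{P}(I)$ (because $m\mapsto m^{\otimes n}$ is weakly continuous and $f$ is bounded continuous in its relevant variables). Thus $\tilde{\mathfrak{F}}^k\subset \bar{\mathcal{C}}(\mathcal{P}(I))$ is a point-separating, nowhere-vanishing subalgebra; the Stone--Weierstrass theorem gives its uniform density in $\bar{\mathcal{C}}(\mathcal{P}(I))$. Uniform density implies measure-determining, and since $\mathcal{P}(I)$ is compact, Lemma 3.4.3 of \cite{EK86-book} (as already cited in Definition \ref{Definition-convergence and measure-determining general}) promotes measure-determining to convergence-determining. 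This completes the argument; I expect the only real subtlety to be the smoothness step in the separation argument, while everything else is essentially a repetition of Proposition \ref{Proposition-algebra separates points-Moran} adapted from $\mathcal{P}^N(I)$ to $\mathcal{P}(I)$.
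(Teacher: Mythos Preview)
Your proof is correct and follows essentially the same route as the paper's: the algebra structure via the shift trick $\tilde\Phi^f\tilde\Phi^g=\tilde\Phi^{f\cdot(g\circ\tau_n)}$, separation of points via functions depending only on the first coordinate, and the measure/convergence-determining conclusion via Stone--Weierstrass on the compact space $\mathcal{P}(I)$ combined with Lemma~3.4.3 of \cite{EK86-book} (the paper cites Theorem~3.4.5 of \cite{EK86-book} in place of invoking Stone--Weierstrass directly, which amounts to the same thing). You are in fact slightly more explicit than the paper about the $k\geq 1$ approximation step, but the arguments otherwise coincide.
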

\begin{proof}
To show that $\tilde{\mathfrak{F}}^k$ is an algebra of functions for every $k\in \N\cup \{0,\infty\}$, observe that for $\Phi^f, \Phi^g \in \tilde{\mathfrak{F}}^k$ with degree $n_1$ and $n_2$, respectively, we have
\begin{equation}
\Phi^f+ \Phi^g=\Phi^{f+g} \in \tilde{\mathfrak{F}}^k
\end{equation}
where $f+g\in C_n(I^{\N})$, for $n=max\{n_1,n_2\}$. Also
\begin{equation}\label{product of functions in the algebra}
\Phi^f\Phi^g=\Phi^{f.(g\circ \tau_{n_1})}\in \tilde{\mathfrak{F}}^k,
\end{equation}
where $\tau_k:I^{\N}\rightarrow I^{\N}$ is the translation operator on $I^{\N}$ defined by $\tau_r(a_1,a_2,...)=(a_{r+1}, a_{r+2},...)$ for $r\in \N$. Note that being convergence-determining and measure-determining are equivalent for $\tilde{\mathfrak{F}}^k \subset C(\mathcal{P}(I))$, by Lemma 3.4.3 \cite{EK86-book}. Thus, by Theorem 3.4.5 \cite{EK86-book}, it suffices to show that $\tilde{\mathfrak{F}}^k$ separates points. The latter follows from the fact that for any $m_1,m_2\in \mathcal{P}(I)$, there exists $f\in \mathcal{C}_1^k(I^{\N})$, for $k\in \N\cup \{0,\infty\}$, such that
\begin{equation}
\Phi^f(m_1)=<m_1,f>\neq <m_2,f>=\Phi^f(m_2).
\end{equation}
Also $1\in \mathcal{C}_0^k(I^{\N})$, and for any $m\in \mathcal{P}(I)$, we have $1=<m,1>\neq 0$. This proves the proposition.
\end{proof}
\begin{remark}
$\tilde{\mathfrak{F}}$ is dense in $\mathcal{C}(\mathcal{P}(I))$ in the topology of uniform convergence on compact sets.
\end{remark}
We are ready to define the generator of FVRE and state the quenched martingale problem in r.e. for it. For $n\in \N$ and $f\in \mathcal{C}_n(I^\N)$, let $\tilde{\Phi}^f$ be a polynomial. The generator of the FVRE with a fitness process $e$ is the operator process
\begin{equation}
\tilde{\mathcal{G}}^e:\Omega \times \R_+ \times \tilde{\mathfrak{F}} \rightarrow \tilde{\mathfrak{F}}
\end{equation}
also denoted by $\tilde{\mathcal{G}}=(\tilde{\mathcal{G}}_t)_{t\geq 0}$, defined as
\begin{equation}
\tilde{\mathcal{G}}^e=\tilde{\mathcal{G}}^{res}+\tilde{\mathcal{G}}^{mut}+\tilde{\mathcal{G}}^{sel,e},
\end{equation}
where the first and the second terms on the right hand side are the linear operators corresponding to resampling and mutation (generators) from $\tilde{\mathfrak{F}}$ to $\tilde{\mathfrak{F}}$, and the third one is an operator process serving as the selection generator. Usually, we drop the superscript $e$, when there is no risk of confusion. For $\tilde{\Phi}^f\in \tilde{\mathfrak{F}}$, $m\in \mathcal{P}(I)$ and $x=(x_1,x_2,...)$, the operator process is defined as follows.

The resampling generator is defined by 
\begin{equation}
\tilde{\mathcal{G}}^{res} \tilde{\Phi}^f(m)= \frac{\gamma}{2} \sum\limits_{i,j=1}^n <m^{\otimes \N},f \circ \sigma_{i,j}-f>.
\end{equation}

For mutation, put
\begin{multline}
\tilde{\mathcal{G}}^{mut} \tilde\Phi^f(m)=\beta \sum\limits_{i=1}^n <m^{\otimes \N},B_i f-f> \\ =\beta' \sum\limits_{i=1}^n <m^{\otimes \N},B_i^{'} f-f>+\beta'' \sum\limits_{i=1}^n <m^{\otimes \N},B_i^{''} f-f>
\end{multline}
Recall that
\begin{equation}
B_i f(x)=\int\limits_I f \circ \sigma_i^y (x)q(x_i,dy)
\end{equation}
\begin{equation}
B_i' f(x)=\int\limits_I f\circ \sigma_i^y (x)q'(dy)
\end{equation}
\begin{equation}
B_i'' f(x)=\int\limits_I f\circ \sigma_i^y (x)q''(x_i,dy).
\end{equation}

For the selection generator, define the following operator process
\begin{equation}
\tilde{\mathcal{G}}^{sel}:\Omega \times \R_+ \times \tilde{\mathfrak{F}} \rightarrow \tilde{\mathfrak{F}}
\end{equation}
consistent with $e$ such that $\tilde{\mathcal{G}}^{sel}(\omega,t)$ is defined to be a linear operator from $\tilde{\mathfrak{F}}$ to $\tilde{\mathfrak{F}}$ as
\begin{equation}
\tilde{\mathcal{G}}^{sel}(\omega,t) \tilde\Phi^f(m)= {\alpha} \sum\limits_{i=1}^n <m^{\otimes \N},e_i(\omega,t) f -e_{n+1}(\omega,t)f>.
\end{equation}
Recall that $e_i(\omega,t)(x)=e(\omega,t,x_i)$ and $\tilde{e}_i(t)(x)=\tilde{e}(t)(x_i)$ for a given 
trajectory $\tilde{e}\in D_E[0,\infty)$. Then as denoted in Definitions \ref{Definition-operator process} and \ref{Quenched martingale problem in r.e.}, we have
\begin{equation}
\tilde{\mathcal{G}}_{\tilde{e}}^{sel}(t) \tilde\Phi^f(m)=\tilde{\mathcal{G}}^{sel}(e^{-1}(\tilde{e}),t) \tilde\Phi^f(m)= \alpha \sum\limits_{i=1}^n <m^{\otimes \N},\tilde{e}_i(t) f - \tilde{e}_{n+1}(t)f>.
\end{equation}
Also, for a given trajectory $\tilde{e}\in D_E[0,\infty)$, let $\tilde{\mathcal{G}}_{\tilde{e}}=\tilde{\mathcal{G}}_{\tilde{e}}^e:\R_+\times\tilde{\mathfrak{F}}\rightarrow \tilde{\mathfrak{F}}$ be defined by
\begin{equation}
\tilde{\mathcal{G}}_{\tilde{e}}:=\tilde{\mathcal{G}}^{res}+\tilde{\mathcal{G}}^{mut}+\tilde{\mathcal{G}}_{\tilde{e}}^{sel}.
\end{equation}
The following theorems state the wellposedness of FVRE martingale problem and identify the limit of the measure-valued Moran processes in r.e. as the unique solution of a quenched martingale problem in r.e..
\begin{theorem}\label{Theorem-Wellposedness of FVRE martingale problem}
Let $e$ be a fitness process and let $\tilde{\textbf{P}}_0:D_E[0,\infty) \rightarrow \mathcal{P}(\mathcal{P}(I))$ be measurable, and $\mathbb{P}^{env,e}\in \mathcal{P}(D_E[0.\infty))$. The $(\tilde{\mathcal{G}}^e,\tilde{\mathfrak{F}},\tilde{\textbf{P}}_0,\mathbb{P}^{env,e})$-martingale problem is well-posed. Furthermore, the unique solution is a strong Markov process.
\end{theorem}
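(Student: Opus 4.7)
The plan is to reduce the quenched well-posedness to a statement about individual environment trajectories and then reassemble using measurability, with existence coming from the Moran approximation of Section 5 and uniqueness from the function-valued dual of Section 4. By the remark following Definition \ref{Quenched martingale problem in r.e.}, well-posedness of $(\tilde{\mathcal{G}}^e,\tilde{\mathfrak{F}},\tilde{\textbf{P}}_0,\mathbb{P}^{env,e})$ is equivalent to $\mathbb{P}^{env,e}$-a.s. well-posedness of the family of time-inhomogeneous problems $(\tilde{\mathcal{G}}_{\tilde{e}},\tilde{\mathfrak{F}},\tilde{\textbf{P}}_0(\tilde{e}))$ together with a measurable selection $\tilde{e}\mapsto \textbf{P}^{\tilde{e}}$.

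\textbf{Step 1 (Existence for fixed $\tilde{e}$).} Fix $\tilde{e}\in D_E[0,\infty)$ and set $p_0=\tilde{\textbf{P}}_0(\tilde{e})$. Choose $p_0^N\in \mathcal{P}(\mathcal{P}^N(I))$ converging weakly to $p_0$ (possible since $\mathcal{P}(I)$ is compact and empirical measures are dense), and let $\mu_N^{\tilde{e}}$ be the quenched Moran process with initial law $p_0^N$ given by Proposition \ref{Proposition-well-posedness of Moran martingale problem}. Since $\mathcal{P}(I)$ is compact and $\tilde{\mathcal{G}}^N_{\tilde{e}}\tilde\Phi^f$ is uniformly bounded in $N$ for each $\tilde\Phi^f\in \tilde{\mathfrak{F}}$, the Aldous–Rebolledo criterion, applied to the measure-determining family $\tilde{\mathfrak{F}}$, yields tightness of $(\mu_N^{\tilde{e}})_N$ in $D_{\mathcal{P}(I)}[0,\infty)$. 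A subsequential weak limit $\mu^{\tilde{e}}$ solves the martingale problem $(\tilde{\mathcal{G}}_{\tilde{e}},\tilde{\mathfrak{F}},p_0)$ provided $\tilde{\mathcal{G}}^N_{\tilde{e}}\tilde\Phi^f \to \tilde{\mathcal{G}}_{\tilde{e}}\tilde\Phi^f$ uniformly on $\mathcal{P}^N(I)$ in a sense compatible with the convergence $\mu_N^{\tilde{e}}\Rightarrow \mu^{\tilde{e}}$; this is precisely the content promised in Section \ref{Section-proof of convergence of MRE to FVRE and wellposedness} (convergence of generators in Section 5).

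\textbf{Step 2 (Uniqueness and strong Markov property for fixed $\tilde{e}$).} For the same trajectory $\tilde{e}$, let $H^{\tilde{e}}_{s,t}$ denote the function-valued dual of Section \ref{Section-FV Duality}, evolving backward in time from $t$ to $s$. The duality relation for any solution $\mu^{\tilde{e}}$ of $(\tilde{\mathcal{G}}_{\tilde{e}},\tilde{\mathfrak{F}},p_0)$ takes the form
\begin{equation}
\mathbb{E}^{p_0}\bigl[\tilde\Phi^f(\mu^{\tilde{e}}(t))\bigr]=\mathbb{E}\bigl[\langle p_0^{\otimes \N}, H^{\tilde{e}}_{0,t}(f)\rangle\bigr],
\end{equation}
whose right-hand side depends only on $p_0$, $\tilde{e}$ and the law of the dual. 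Since $\tilde{\mathfrak{F}}$ is measure-determining (Proposition \ref{Proposition-algebra separates points-FV}), this determines the one-dimensional distributions of $\mu^{\tilde{e}}$. Invoking Proposition \ref{Proposition-uniqueness-one dimensional distributions}, uniqueness of one-dimensional distributions forces uniqueness of the martingale problem and Markovianity of the solution; moreover, because $\tilde{\mathcal{G}}_{\tilde{e}}(t)$ maps $\tilde{\mathfrak{F}}\subset \bar{\mathcal{C}}(\mathcal{P}(I))$ into $\bar{\mathcal{C}}(\mathcal{P}(I))$ (as $\tilde{e}$ is càdlàg with values in $\mathcal{C}(I,[0,1])$), the same proposition upgrades this to the strong Markov property of each quenched solution.

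\textbf{Step 3 (Measurable assembly).} To obtain a bona fide quenched solution in the sense of Definition \ref{Quenched martingale problem in r.e.}, I verify that the map $\tilde{e}\mapsto \textbf{P}^{\tilde{e}}\in \mathcal{P}(D_{\mathcal{P}(I)}[0,\infty))$ is Borel measurable. This reduces, via the duality formula above, to the measurable dependence of the dual jump rates on $\tilde{e}$ (the selection rates are explicit integrals of $\tilde{e}$ along segments of the trajectory), combined with measurability of $\tilde{\textbf{P}}_0$; standard arguments then promote measurability of finite-dimensional distributions to measurability of the path law on $D_{\mathcal{P}(I)}[0,\infty)$. Averaging yields the annealed law $\textbf{P}=\int \textbf{P}^{\tilde{e}}\,\mathbb{P}^{env,e}(d\tilde{e})$ of $\mu^e$, and the strong Markov property of the quenched processes is inherited by the family.

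\textbf{Main obstacle.} The routine parts are the tightness estimate in Step 1 and the measurability bookkeeping in Step 3. The real work is Step 2: constructing a function-valued dual that is genuinely defined pathwise in the random environment, controlling the integrability needed for the duality identity (especially in the presence of the unbounded selection operator acting degree-increasingly on polynomials), and confirming that the resulting one-dimensional marginals uniquely pin down the law. Once the dual of Section \ref{Section-FV Duality} is in hand and the generator convergence of Section \ref{Section-Continuity-proof} is invoked, the remaining pieces assemble directly.
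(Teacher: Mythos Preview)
Your proposal is correct and follows essentially the same architecture as the paper: existence for each fixed $\tilde{e}$ via tightness of the Moran approximants and convergence of generators (the paper's Lemma~\ref{Lemma-tightness of Moran and FV} and Proposition~\ref{Proposition-convergence of generators-general} in place of your Aldous--Rebolledo invocation), uniqueness via the function-valued dual and Proposition~\ref{Proposition-existence of dual gives uniqueness}, and the strong Markov property from Proposition~\ref{Proposition-uniqueness-one dimensional distributions}. Two small remarks: in your displayed duality identity the object $p_0^{\otimes\N}$ is a type error (since $p_0\in\mathcal{P}(\mathcal{P}(I))$, the right-hand side should read $\int_{\mathcal{P}(I)}\mathbb{E}[\langle m_0^{\otimes\N},H^{\tilde{e}}_{0,t}(f)\rangle]\,p_0(dm_0)$, which is how the paper handles it by first reducing to Dirac initial conditions via compactness of $\mathcal{P}(I)$); and your Step~3 on measurable dependence $\tilde{e}\mapsto\textbf{P}^{\tilde{e}}$ is more explicit than the paper, which simply asserts that quenched well-posedness for every $\tilde{e}$ ``immediately'' yields the quenched martingale problem.
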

\begin{definition}
Let $e$ be an $E$-valued stochastic process with sample paths in $D_E[0,\infty)$ and law $\mathbb{P}^{env}$. The unique $\mathcal{P}(I)$-valued process which is the solution of the  martingale problem $(\tilde{\mathcal{G}}^e,\tilde{\mathfrak{F}},\tilde{\textbf{P}}_0,\mathbb{P}^{env,e})$, denoted by $\mu^e$, is called Fleming-Viot process in r.e. $e$ ($FVRE$). When there is no risk of ambiguity, we drop $e$ from the superscripts. For a given  trajectory of $e$, namely $\tilde{e}$ picked by $\mathbb{P}^{env}$, $\mu^{\tilde{e}}$ represents the quenched FV process with the deterministic (fixed) environment $\tilde{e}$.
\end{definition}

Recall that we frequently denote by $e, e_N$ stochastic fitness processes, and by $\tilde{e}$ a fixed time-dependent fitness function (an element of $D_E[0,\infty)$). Also note that measurable functions $\tilde{\mathbf{P}}_0:D_E[0,\infty)\rightarrow \mathcal{P}(\mathcal{P}(I))$ and $\tilde{\mathbf{P}}_0^N:D_E[0,\infty)\rightarrow \mathcal{P}(\mathcal{P}^N(I))$, for $N \in \N$, are initial distributions of FVRE and MRE, respectively. Also, in the following, we assume
\begin{equation}
\begin{array}{l}
\mathbb{P}^{env}=\mathbb{P}^{env,e}\in \mathcal{P}(D_E[0,\infty))\\
\mathbb{P}^{env,N}= \mathbb{P}^{env,e_N}\in \mathcal{P}(D_E[0,\infty))
\end{array}
\end{equation}
are the laws of fitness processes $e$ and $e_N$, for $N\in \N$, respectively. We usually use the environment $e$ for FVRE and $e_N$ for MRE with $N$ individuals and assume $e_N$ converges to $e$ in Skororkhod topology. In particular, let $\mu_N^{e_N}$ be the unique solution to the quenched martingale $(\tilde{\mathcal{G}}^{N,e_N},\tilde{\mathcal{F}}_N,\tilde{\mathbf{P}}_0^N, \mathbb{P}^{env,N})=(\tilde{\mathcal{G}}^N,\tilde{\mathcal{F}}_N,\tilde{\mathbf{P}}_0^N, \mathbb{P}^{env,N})$ and $\mu^e$ be the unique solution to $(\tilde{\mathcal{G}}^e,\tilde{\mathcal{F}},\tilde{\mathbf{P}}_0, \mathbb{P}^{env})=(\tilde{\mathcal{G}},\tilde{\mathcal{F}},\tilde{\mathbf{P}}_0, \mathbb{P}^{env})$.
\begin{theorem}\label{Theorem-continuity of sample paths of FVRE}
Let $e$ be a fitness process and $\textbf{P}$ be the law of $\mu^e$ with the family of quenched measures $\{\textbf{P}^{\tilde{e}}\}_{\tilde{e}\in D_E[0,\infty)}$ for $\{\mu^{\tilde{e}}\}_{\tilde{e}\in D_E[0,\infty)}$. Then for $\mathbb{P}^{env}$-a.e. $\tilde{e} \in D_E[0,\infty)$, the process $\mu^{\tilde{e}}$ has continuous sample paths (in $\mathcal{C}([0,\infty),\mathcal{P}(I))$) $\textbf{P}^{\tilde{e}}$-a.s., that is
\begin{equation}
\mathbb{P}^{env}(\tilde{e}\in D_E[0,\infty): \textbf{P}^{\tilde{e}}(t\mapsto\mu^{\tilde{e}}(t) \ is \ continuous)=1)=1.
\end{equation}
Therefore,
\begin{equation}
\textbf{P}(t\mapsto\mu^e(t) \ is \ continuous)=1.
\end{equation}
\end{theorem}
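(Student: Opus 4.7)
The plan is to bootstrap continuity from the weak convergence of the measure-valued Moran process to the Fleming--Viot process in random environment; the key observation is that each jump of the Moran process has size $O(1/N)$, so the weak limit ought to inherit continuous paths.

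First, I would apply the Moran-to-FV convergence theorem (proved in Section \ref{Section-proof of convergence of MRE to FVRE and wellposedness}) specialized to the constant sequences $e_N\equiv \tilde{e}$ and $e\equiv \tilde{e}$, for a fixed trajectory $\tilde{e}\in D_E[0,\infty)$ in the support of $\mathbb{P}^{env}$. This yields the quenched weak convergence
\[
\mu_N^{\tilde{e}}\Rightarrow \mu^{\tilde{e}} \qquad \text{in } D_{\mathcal{P}(I)}[0,\infty).
\]

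Next, I would control the jump sizes of $\mu_N^{\tilde{e}}$. Since every resampling, mutation, or selection event modifies only a single individual's type, $\|\mu_N^{\tilde{e}}(t)-\mu_N^{\tilde{e}}(t-)\|_{TV}\leq 2/N$ at every jump time. Because total variation dominates any metric $d$ on $\mathcal{P}(I)$ compatible with weak convergence (e.g.\ Prokhorov or bounded-Lipschitz; recall that $I$ compact makes $\mathcal{P}(I)$ a compact Polish space),
\[
\sup_{t\geq 0} d\bigl(\mu_N^{\tilde{e}}(t),\mu_N^{\tilde{e}}(t-)\bigr) \leq \tfrac{2}{N} \longrightarrow 0.
\]
I would then invoke the standard Skorokhod-topology fact (the ``vanishing-jump'' criterion, as in \cite{EK86-book}, Ch.\ 3): if $X_N\Rightarrow X$ in $D_S[0,\infty)$ and the maximal jump of $X_N$ tends to $0$ in probability, then $X$ has sample paths in $\mathcal{C}([0,\infty),S)$ almost surely. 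Applied to $X_N=\mu_N^{\tilde{e}}$ and $X=\mu^{\tilde{e}}$, this gives
\[
\textbf{P}^{\tilde{e}}\bigl(t\mapsto \mu^{\tilde{e}}(t) \text{ is continuous}\bigr)=1
\]
for $\mathbb{P}^{env}$-a.e.\ $\tilde{e}$. Integrating against $\mathbb{P}^{env}$ then yields the annealed statement.

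The principal obstacle I anticipate is bookkeeping around measurability: one must verify that $\mathcal{C}([0,\infty),\mathcal{P}(I))$ is a Borel subset of $D_{\mathcal{P}(I)}[0,\infty)$ (standard, via the Skorokhod metric) and that the map $\tilde{e}\mapsto \textbf{P}^{\tilde{e}}(\text{paths continuous})$ is measurable, which follows from the hypotheses built into Definition \ref{quenched process}. One should also check that the Section \ref{Section-proof of convergence of MRE to FVRE and wellposedness} convergence argument does not itself invoke continuity of $\mu^e$---it should rely only on tightness and identification via the quenched martingale problem---so that the present reasoning is not circular.
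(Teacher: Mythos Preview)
Your argument is correct and takes a genuinely different route from the paper. The paper works intrinsically with the generator: it shows (Lemma~\ref{Lemma-order of FVRE generator}) that $\tilde{\mathcal{G}}^{mut}$ and $\tilde{\mathcal{G}}_{\tilde{e}}^{sel}(t)$ are first-order operators while $\tilde{\mathcal{G}}^{res}$ is second-order on the algebra $\tilde{\mathfrak{F}}$, and then invokes a Kolmogorov-type criterion (Lemma~\ref{Lemma-Criteria for continuity of sample paths}, following \cite{DGP12}) which derives a fourth-moment bound $\mathbb{E}[(f(Z_t)-f(Z_s))^4]\leq C(t-s)^2$ directly from the order decomposition. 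Your approach instead recycles the Moran-to-FV convergence already established in Section~\ref{Section-proof of convergence of MRE to FVRE and wellposedness} and applies the vanishing-jump criterion (Theorem 3.10.2 in \cite{EK86-book}); since each Moran jump alters one coordinate, the maximal jump is at most $2/N$ in total variation, hence in the Prokhorov metric, and the criterion applies. Your route is shorter and avoids the operator-order machinery entirely, at the cost of depending on the approximation theorem; the paper's route is self-contained at the level of the limiting martingale problem and would survive even without a particle approximation. Your circularity check is warranted and correct: the convergence proof in Section~\ref{Section-proof of convergence of MRE to FVRE and wellposedness} uses only tightness (Lemma~\ref{Lemma-tightness of Moran and FV}), generator convergence (Proposition~\ref{Proposition-convergence of generators Moran to FV-uniform boundedness}), and uniqueness via duality (Proposition~\ref{Proposition-uniqueness of FVRE martingale problem}), none of which invoke path continuity. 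One small point to tidy: you should note that for arbitrary $\tilde{\mathbf{P}}_0(\tilde{e})\in\mathcal{P}(\mathcal{P}(I))$ one can always choose approximating $\tilde{\mathbf{P}}_0^N(\tilde{e})\in\mathcal{P}(\mathcal{P}^N(I))$ converging weakly (e.g.\ via empirical measures of i.i.d.\ samples), so the hypothesis of the convergence theorem is met.
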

\begin{theorem}\label{Theorem-convergence of Moran to FV by quench processes}
Suppose $\tilde{\mathbf{P}}_0$ and $\tilde{\mathbf{P}}_0^N$ are continuous, for any $N\in \N$.
\begin{enumerate}[(i)]
\item Let $\tilde{e}_N,\tilde{e}\in D_E[0,\infty)$, for $N\in \N$, such that $\tilde{e}_N\rightarrow \tilde{e}$ in $D_E[0,\infty)$. Then
\begin{enumerate}[a)]
\item If $\tilde{\textbf{P}}_0^N(\tilde{e}_N) \rightarrow \tilde{\textbf{P}}_0(\tilde{e})$ in $\mathcal{P}(\mathcal{P}(I))$, as $N\rightarrow \infty$, then $\mu_N^{\tilde{e}_N}\Rightarrow \mu^{\tilde{e}}$ in $D_{\mathcal{P}(I)}[0,\infty)$, as $N\rightarrow \infty$.
\item For $M\in \N$,
\begin{equation}
\mu_M^{\tilde{e}_N}\Rightarrow \mu_M^{\tilde{e}} \ \ as \ \ N\rightarrow \infty
\end{equation}
in $D_{\mathcal{P}^M(I)}[0,\infty)$.
\item $\mu^{\tilde{e}_N}\Rightarrow \mu^{\tilde{e}}$ in $D_{\mathcal{P}(I)}[0,\infty)$, as $N\rightarrow \infty$.
\end{enumerate}
\item Let $e$ and $\{e_N\}_{N \in \N}$ be fitness processes (not necessarily Markov) such that $e_N \rightarrow e$ in $D_E[0,\infty)$ a.s., as $N\rightarrow \infty$. Then
\begin{enumerate}[a)]
\item If $\tilde{\textbf{P}}_0^N(e_N) \rightarrow \tilde{\textbf{P}}_0(e)$ 
in $\mathcal{P}(\mathcal{P}(I))$, as $N\rightarrow \infty$, a.s. 
 then
\begin{equation}
\mu_N^{e_N}\Rightarrow \mu^e
\end{equation}
in $D_{\mathcal{P}(I)}[0,\infty)$, as $N\rightarrow \infty$.
\item For $M\in \N$,
\begin{equation}
\mu_M^{e_N}\Rightarrow \mu_M^e
\end{equation}
in $D_{\mathcal{P}^M(I)}[0,\infty)$, as $N\rightarrow \infty$.
\item $\mu^{e_N}\Rightarrow \mu^e$ in $D_{\mathcal{P}(I)}[0,\infty)$, as $N\rightarrow \infty$.
\end{enumerate}
\end{enumerate}
\end{theorem}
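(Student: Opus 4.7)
The plan is to prove each assertion via the standard three-step program for convergence of measure-valued Markov processes to solutions of well-posed martingale problems: (a) show convergence of the relevant time-inhomogeneous generators on the algebra $\tilde{\mathfrak{F}}^{\infty}$, (b) establish tightness in Skorokhod space, and (c) identify every limit point through Proposition~\ref{Proposition-well-posedness of Moran martingale problem} or Theorem~\ref{Theorem-Wellposedness of FVRE martingale problem}. Step (a), whose detailed treatment the paper defers to Section~\ref{Section-proof of convergence of MRE to FVRE and wellposedness}, amounts to showing that for every polynomial $\tilde{\Phi}^{f}\in\tilde{\mathfrak{F}}^{\infty}$ and every $T>0$,
\begin{equation}
\int_{0}^{T}\bigl\|\tilde{\mathcal{G}}^{N}_{\tilde{e}_{N}}(s)\tilde{\Phi}^{f}-\tilde{\mathcal{G}}_{\tilde{e}}(s)\tilde{\Phi}^{f}\bigr\|_{\infty}\,ds \longrightarrow 0.
\end{equation}
The resampling and mutation contributions converge by the elementary expansion $m^{(N)}(da)=m^{\otimes\mathbb{N}}(da)+O(1/N)$, uniformly on $\mathcal{P}^{N}(I)$; the selection contribution reduces to showing that $\tilde{e}_{N}\to\tilde{e}$ in $D_{E}[0,\infty)$ implies $\tilde{e}_{N}\to\tilde{e}$ in $L^{1}_{\mathrm{loc}}$ with values in the sup-norm topology of $E$. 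This in turn uses the Skorokhod characterisation via continuous bijections $\lambda_{N}:\R_{+}\to\R_{+}$ with $\lambda_{N}\to\mathrm{id}$ and $\tilde{e}_{N}\circ\lambda_{N}\to\tilde{e}$ uniformly on compacts, plus a change of variable inside the time integral to absorb $\lambda_{N}-\mathrm{id}$, with compactness of $E$ providing uniform boundedness throughout.

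For step (b) I would apply Jakubowski's tightness criterion on the compact space $\mathcal{P}(I)$ using the convergence-determining algebra $\tilde{\mathfrak{F}}^{1}$ of Proposition~\ref{Proposition-algebra separates points-FV}: for each $\tilde{\Phi}\in\tilde{\mathfrak{F}}^{1}$ the real-valued process $\tilde{\Phi}(\mu_{N}^{\tilde{e}_{N}})$ has uniformly bounded drift (compactness of $E$ makes the selection rates uniformly bounded) and uniformly bounded jump sizes, so Aldous' criterion yields tightness; compact containment is automatic since $\mathcal{P}(I)$ is itself compact. For step (c), I would pass to a convergent subsequence via Skorokhod's representation theorem and, using (a), take limits inside the martingales
\begin{equation}
\tilde{\Phi}^{f}\bigl(\mu_{N}^{\tilde{e}_{N}}(t)\bigr)-\int_{0}^{t}\tilde{\mathcal{G}}^{N}_{\tilde{e}_{N}}(s)\tilde{\Phi}^{f}\bigl(\mu_{N}^{\tilde{e}_{N}}(s)\bigr)\,ds
\end{equation}
to conclude that any limit point solves the quenched time-inhomogeneous martingale problem $(\tilde{\mathcal{G}}_{\tilde{e}},\tilde{\mathfrak{F}}^{\infty},\tilde{\mathbf{P}}_{0}(\tilde{e}))$; Theorem~\ref{Theorem-Wellposedness of FVRE martingale problem} then forces uniqueness and yields (i)(a). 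Parts (i)(b) and (i)(c) follow from the same three steps with no population scaling, so the resampling and mutation generators coincide in the pre-limit and limit and only the Skorokhod-continuity of the selection generator in the environment variable needs to be invoked, together with Proposition~\ref{Proposition-well-posedness of Moran martingale problem} or Theorem~\ref{Theorem-Wellposedness of FVRE martingale problem} respectively.

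For (ii), I would deduce the annealed statements from (i) by disintegrating with respect to the environment. For any bounded continuous $F$ on $D_{\mathcal{P}(I)}[0,\infty)$,
\begin{equation}
\mathbb{E}\bigl[F(\mu_{N}^{e_{N}})\bigr]=\int_{\Omega}\mathbb{E}^{e_{N}(\omega)}\bigl[F(\mu_{N}^{e_{N}(\omega)})\bigr]\,d\mathbb{P}(\omega);
\end{equation}
the hypothesis $e_{N}\to e$ almost surely together with the quenched convergence from (i)(a) yields $\mathbb{P}$-a.s.\ pointwise convergence of the integrand to $\mathbb{E}^{e(\omega)}[F(\mu^{e(\omega)})]$, and bounded convergence then gives the annealed limit; the same argument covers (ii)(b) and (ii)(c) with (i)(b) and (i)(c) in place of (i)(a). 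The main obstacle is step (a) for the selection generator: Skorokhod convergence of $\tilde{e}_{N}\to\tilde{e}$ does not give uniform-in-$t$ convergence and can fail at jump times of $\tilde{e}$, so pointwise-in-$t$ passage to the limit is illegitimate. The time-change $\lambda_{N}$ device described above is the essential technical content of Section~\ref{Section-proof of convergence of MRE to FVRE and wellposedness}; once the integral-form generator convergence is established, the remainder is a direct application of a time-inhomogeneous variant of Ethier--Kurtz Theorem~4.8.10.
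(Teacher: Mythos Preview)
Your three-step program (generator convergence, tightness, identification via well-posedness) and your derivation of (ii) from (i) by disintegration plus bounded convergence match the paper's structure. The main difference is in how you handle the selection generator, and here you over-engineer and also misidentify what the paper actually does. You assert that the time-change device $\lambda_N$ is ``the essential technical content'' of Section~\ref{Section-proof of convergence of MRE to FVRE and wellposedness}, but the paper never uses it. Instead, the paper simply observes (Propositions~\ref{Proposition-convergence of generators Moran to FV-uniform boundedness}--\ref{Proposition-convergence of generators-uniform boundedness Moran process}) that Skorokhod convergence $\tilde{e}_N\to\tilde{e}$ implies $\tilde{e}_N(s)\to\tilde{e}(s)$ in $E$ at every continuity point $s$ of $\tilde{e}$, hence for all but countably many $s$; this yields $\|\tilde{\mathcal{G}}^N_{\tilde{e}_N}(s)\tilde{\Phi}_N-\tilde{\mathcal{G}}_{\tilde{e}}(s)\tilde{\Phi}\|_N\to 0$ for a.e.\ $s$. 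Combined with the uniform-in-$(N,s)$ bound on the generators (from compactness of $E$), dominated convergence in the time integral is immediate, and this is exactly what the general Proposition~\ref{Proposition-convergence of generators-general} packages: its hypothesis (ii) asks only for a.e.-in-$s$ convergence, not $L^1_{\mathrm{loc}}$ or uniform convergence. So your concern that ``pointwise-in-$t$ passage to the limit is illegitimate'' is misplaced---a.e.\ pointwise convergence plus a uniform bound is all that is needed. Your $L^1_{\mathrm{loc}}$ route via $\lambda_N$ would also work, but it is strictly more laborious. For tightness the paper invokes an Ethier--Kurtz criterion (Lemma~\ref{Lemma-criteria for tightness-general}, built on Theorems~3.9.1 and 3.9.4 of \cite{EK86-book}) rather than Jakubowski/Aldous; the inputs are the same (uniformly bounded generators, compact state space), so this is a cosmetic difference.
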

\begin{remark}
We summarize the convergence theorem in the following diagram. If  $e_M\rightarrow e$, as $M\rightarrow \infty$, a.s. in $D_E[0,\infty)$, then
\begin{equation}
\begin{array}{ccc}
\mu_N^{e_M} & \Rightarrow & \mu^{e_M}\\
\Downarrow & \rotmedgeq & \Downarrow \\
\mu_N^e & \Rightarrow & \mu^e
\end{array}
\end{equation}
as $M\rightarrow \infty$ and $N \rightarrow \infty$, appropriately.

\end{remark}

\begin{definition}
We say an $S$-valued Markov process $Z$ is weakly ergodic if there exists $m\in \mathcal{P}(S)$ such that for every initial distribution of $Z$
\begin{equation}
\lim\limits_{t \rightarrow \infty} \int\limits_{\Omega} f(Z_t(\omega))d \mathbb{P}=<m,f>,\ \ \ \ f \in \bar{\mathcal{C}}(S).
\end{equation}
In other words, letting $\{T_t^Z\}$ be the semigroup of $Z$ on $\bar{\mathcal{C}}(S)$, there exists $m\in \mathcal{P}(S)$ such that
\begin{equation}
\lim\limits_{t\rightarrow \infty} T_t^Z f(x)=<m,f>
\end{equation}
for any $x\in S$ and $f\in \bar{\mathcal{C}}(S)$.
\end{definition}
\begin{theorem}\label{Theorem-main ergodic theorem}
Suppose there exists a parent-independent component in the mutation process, i.e. $\beta'>0$, and let $e$ either be a stationary fitness process (not necessarily Markov) or a weakly ergodic Markov fitness with semigroup $\{T_t^{env}\}$ such that $T_t^{env}:\mathcal{C}(E) \rightarrow \mathcal{C}(E)$ for any $t\geq 0$
. Then the following statement holds.
\begin{enumerate}[(i)]
\item There exists a $\mathcal{P}(I)$-valued random variable $\mu^e(\infty)$ such that
\begin{equation}
\mu^e(t)\Rightarrow \mu^e(\infty)
\end{equation}
as $t\rightarrow \infty$, in $\mathcal{P}(I)$.
\item By assumption on weak ergodicity of $e$, there exists an $E$-valued random variable $e(\infty)$ such that the annealed-environment process converges weakly, that is
\begin{equation}
(\mu^e(t),e(t))\Rightarrow (\mu^e(\infty), e(\infty))
\end{equation}
as $t\rightarrow \infty$, in $\mathcal{P}(I)\times E$, and the law of $(\mu^e(\infty), e(\infty))$ is the unique invariant distribution of $(\mu^e(t),e(t))_{t\geq 0}$.
\end{enumerate}
\end{theorem}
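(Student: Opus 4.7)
The plan is to exploit the function-valued dual for FVRE developed in Section~4, combined with the parent-independent mutation assumption $\beta'>0$. The duality formula will express
\[
\mathbb{E}^{\tilde{\mathbf{P}}_0(\tilde{e})}\bigl[\tilde{\Phi}^f(\mu^{\tilde{e}}(t))\bigr]
\]
for each polynomial $\tilde{\Phi}^f\in\tilde{\mathfrak{F}}$ in terms of a backward-running function-valued process whose dynamics at time $r\in[0,t]$ apply resampling and mutation operators to the test function and use $\tilde{e}(t-r)$ for the selection steps. Uniqueness of FVRE (Theorem~\ref{Theorem-Wellposedness of FVRE martingale problem}) together with the convergence-determining property of $\tilde{\mathfrak{F}}$ (Proposition~\ref{Proposition-algebra separates points-FV}) then reduces the asserted weak convergences to convergence of the duality expectations for each fixed $\tilde{\Phi}^f$.

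For part~(i), I would show that, with probability one, the backward dual is eventually ``flattened'' by the parent-independent mutation. Each active variable in the dual is subject to an independent Poisson clock of rate at least $\beta'$, and upon a ring the operator $B_i'$ replaces the corresponding slot of the test function by an independent sample from $q'$, erasing the dependence of the dual function on that slot of the initial condition. While selection creates new variables at rate proportional to the current number of variables, that number is stochastically dominated by a linear pure-birth process, so the total refreshment time $\tau$ of all active variables is a.s.\ finite and has exponential tails uniformly in $\tilde{e}$. For $t>\tau$, the duality expectation no longer depends on $\tilde{\mathbf{P}}_0(\tilde{e})$ but only on the segment of environment near the terminal time~$t$. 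Invoking either stationarity of $e$ or the weak-ergodicity assumption, the law of this terminal-environment segment has a weak limit as $t\to\infty$, yielding a limit law $\mu^e(\infty)\in\mathcal{P}(I)$.

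For part~(ii), I enlarge the class of test functions to product form $\tilde{\Phi}^f(\mu)\,g(\hat{e})$ with $g\in\mathcal{C}(E)$, which is convergence-determining on $\mathcal{P}(I)\times E$. Weak ergodicity of the Markov fitness, together with the hypothesis that $T^{env}_t$ preserves $\mathcal{C}(E)$, gives $e(t)\Rightarrow e(\infty)$; conditioning on the environment and applying part~(i) quenchedly yields the joint convergence $(\mu^e(t),e(t))\Rightarrow (\mu^e(\infty),e(\infty))$. Invariance of the limiting joint law under the semigroup of the annealed-environment process follows from the Markov property, and uniqueness of the invariant law is deduced from the fact that any such law must project to the unique invariant of $e$ in the second coordinate and, conditionally, to the quenched limit obtained in part~(i) in the first.

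The principal difficulty lies in the time-inhomogeneous nature of the dual: selection operators depend on $\tilde{e}(t-r)$, which for a non-stationary Markov~$e$ is not itself a stationary trajectory when viewed backward. Here the hypothesis $T^{env}_t:\mathcal{C}(E)\to\mathcal{C}(E)$ is essential, since it ensures that expectations of continuous functionals of the (backward) environment are continuous in the terminal state and hence convergent under weak ergodicity. Reconciling the two time scales---the $O(1)$-time refreshment of the dual coming from $\beta'>0$, and the $t\to\infty$ mixing of the environment---is the crux of the argument, and is what allows the two limit operations to be interchanged.
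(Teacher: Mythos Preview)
Your overall architecture---duality, absorption of the dual in finite time, then convergence of the environment-dependent residue---matches the paper's. But the mechanism you give for the a.s.\ finiteness of the absorption time $\tau$ has a gap. You argue that each active variable carries an independent $\beta'$-clock and that the total number of variables is dominated by a linear pure-birth process; from this you conclude $\tau<\infty$ a.s. That implication fails: selection adds variables at rate $n\alpha$ while parent-independent mutation removes them at rate $n\beta'$, so if $\alpha\ge\beta'$ the degree process, ignoring resampling, is a random walk with non-negative drift and need not hit $0$. Domination by a pure-birth process only tells you the count stays finite at finite times, not that it is ever absorbed. The paper's Proposition~\ref{Proposition-dual absorbing time} closes this by using the \emph{resampling} jumps, which kill a variable at rate $\binom{n}{2}\gamma/2$: the quadratic death rate dominates the linear birth rate $n\alpha$, forcing the degree process to hit $1$ in finite time regardless of the size of $\alpha$, after which $\beta'>0$ takes it to $0$ with positive probability. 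You need this quadratic mechanism (or some equivalent control) for the argument to go through without a restriction like $\alpha<\beta'$.

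A second, smaller issue is in part~(ii): you write that joint convergence follows by ``conditioning on the environment and applying part~(i) quenchedly.'' Part~(i) is an \emph{annealed} statement; for a fixed non-stationary trajectory $\tilde e$ there is no reason $\mu^{\tilde e}(t)$ converges as $t\to\infty$, since the environment keeps moving. The paper instead works directly with $\mathbb{E}[\langle\mu_t^e,\hat\psi_0\rangle\, f_2(e_t)]$, conditions on the Poisson skeleton $(\tau,J_\tau,\kappa_\tau,T_\tau^{sel})$ of the dual (which is independent of $e$), and then applies Lemma~\ref{Lemma-weakly ergodicity and convergence of joint distributions} to the finite-dimensional environment marginals $(e_{t-\varsigma_k},\dots,e_{t-\varsigma_1},e_t)$. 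Your last paragraph correctly identifies this interchange as the crux, but the sentence ``applying part~(i) quenchedly'' does not implement it.
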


The strategy to prove these theorems for the annealed processes $\mu^e$ and $\mu_N^{e_N}$ is to prove them  for quenched processes (processes with fixed environments), first, and then integrating over the elements of $D_E[0,\infty)$ we get the result for the annealed process. As each quenched (fixed) environment is a deterministic process and thus Markov, one can characterize the quenched processes as a quenched martingale problem in r.e. regardless of having Markovian property for the environments. This is one important advantage of this method. The technique that we apply is the combination of martingale problem and duality method. As the fitness process and hence the quenched generators depend on time, the dual process also must do so. Therefore, we need to understand the behaviour of the time-dependent dual process. The next section prepares some generalities about dual processes for time-inhomogeneous Markov processes.
\section{Duality method for stochastic processes in random environments}\label{Section-general duality}
One application of the duality method for martingale problems is to transform the uniqueness problem into the existence problem. Furthermore, in many cases, studying the dual is relatively simpler than studying the main Markov process. This gives more information about the main process which is harder to study directly. Duality method has been developed for many time-homogeneous Martingale problems. In this section, we extend the method of duality for time-inhomogeneous and quenched martingale problems, and generalize the notion of time-dependent Feynman-Kac duals, namely we study general duals in which an exponential term appears. In fact, some important aspects of duality relation only appears for time-inhomogeneous martingale problems. Roughly speaking, the evolution of the dual is backward in time with respect to the main Markov process. In practice, we usually cannot avoid appearance of Feynman-Kac term in dual processes. However, in the case of Fleming-Viot process when the fitness process is bounded, one can give duals in which there is no exponential term. In fact, when the fitness function (process) is unbounded, the existence of Feynman-Kac term is unavoidable
.

In this section we assume that $S'$ and $S_1$ are Polish metric spaces, $S_2$ is a separable metric space, and $\mathbb{P}^{env}$ is the law of the environment $X:\Omega \rightarrow S'$ which is an $S'$-valued random variable. Let $\mathcal{D}_i \subset \mathcal{B}(S_i)$ for $i=1,2$. We assume $\mathcal{G}^{i,t}:\R_+ \times \mathcal{D}_i \rightarrow \mathcal{B}(S_i)$ to be time-dependent linear operators, for $i=1,2$, and let $m_0^i \in \mathcal{P}(S_i)$. Also, we assume, for $i=1,2$ and for any real number $t\geq 0$, $G^{i,t}:\Omega\times \R_+\times \mathcal{D}_i \rightarrow \mathcal{B}(S_i)$ are operator processes with domain $\mathcal{D}_i$, and $\textbf{P}_0^i:S'\rightarrow \mathcal{P}(S_i)$ are measurable. Let $f\in \mathcal{B}(S_1\times S_2)$ be such that $f(.,v) \in \mathcal{D}_1$ and $f(u,.) \in \mathcal{D}_2$ for any $v \in S_2$ and $u \in S_1$. Let $g:\R_+ \times S_2 \rightarrow \R$ be a Borel measurable function. We start with the definition of duality for two families of time-inhomogeneous problems.

Two families of time-dependent martingale problems $\mathscr G^{*1}=\{(\mathcal{G}^{1,t}, \mathcal{D}_1, m_0^1)\}_{t\in \R_+}$ and $\mathscr G^{*2}=\{(\mathcal{G}^{2,t}, \mathcal{D}_2, m_0^2)\}_{t\in \R_+}$ are said to be dual with respect to $(f,g)$, if for each family of solutions $\{\zeta^t\}_{t\in \R_+}$ to the martingale problem $\mathscr G^{*1}$, with respective laws $\{m^{1,t}\}_{t\in \R_+}$, and each family of solutions $\{\xi^t\}_{t\in \R_+}$ to the martingale problem $\mathscr G^{*2}$, with respective laws $\{m^{2,t}\}_{t\in \R_+}$, we have 
\begin{equation}\label{FK-finite-1}
\int\limits_{S_1}\mathbb{E}_{m^{2,t}}[|f(u,\xi^t(t))| \exp \{\int\limits_0^t g(s,\xi^t(s))ds\}] m_0^1(du)] <\infty
\end{equation}
for any $t\in \R_+$, and
\begin{equation}
\int\limits_{S_2}\mathbb{E}_{m^{1,t}}[f(\zeta^t(t),v)] m_0^2(dv) =
\int\limits_{S_1}\mathbb{E}_{m^{2,t}}[f(u,\xi^t(t)) \exp \{\int\limits_0^t g(s,\xi^t(s))ds\}] m_0^1(du).
\end{equation}

We extend this idea to two families of quenched martingale problems in random environment. Let $M^i(x)\subset \mathcal{P}(S_i)$ be a collection of measures on $S_i$, for any $x\in S'$ and $i=1,2$. Set 
\begin{equation}
M^i:=\{\textbf{P}_0^i \in \mathscr{B}(S',\mathcal{P}(S_i)): \forall x\in S', \ \textbf{P}_0^i(x)\in M^i(x)\},
\end{equation}
where $\mathscr{B}(S',\mathcal{P}(S_i))$ is the set of all Borel measurable functions from $S'$ to $\mathcal{P}(S_i)$ for $i=1,2$.\\

Two families of quenched martingale problems in r.e. $X$, namely
\[\mathscr{G}^1=\mathscr{G}^1(M^1):=\{(G^{1,t}, \mathcal{D}_1, \textbf{P}_0, \mathbb{P}^{env})\}_{(t,\textbf{P}_0)\in \R_+\times M^1}\]
 and 
\[\mathscr{G}^2=\mathscr{G}^2(M^2):= \{(G^{2,t}, \mathcal{D}_2, \textbf{Q}_0, \mathbb{P}^{env})\}_{(t,\textbf{Q}_0)\in \R_+\times M^2},\]
are said to be (strongly) dual with respect to $(f,g)$, if for each family of solutions $\{\zeta^{t,\textbf{P}_0,X}\}_{(t,\textbf{P}_0)\in \R_+\times M^1}$ to $\mathscr G^1$, where each solution $\zeta^{t,\textbf{P}_0,X}$ has the family of quenched laws $\{\textbf{P}^{t,\textbf{P}_0,x}\}_{x\in S'}$, and for each family of solutions $\{\xi^{t,\textbf{Q}_0,X}\}_{(t,\textbf{Q}_0)\in \R_+\times M^2}$ to $\mathscr G^2$, where each solution $\xi^{t,\textbf{Q}_0,X}$ has the family of quenched laws $\{\textbf{Q}^{t,\textbf{Q}_0,x}\}_{x\in S'}$, we have:
\begin{equation}\label{FK-finite}
\mathbb{E}_{\mathbb{P}^{env}}[\int\limits_{S_1}\mathbb{E}_{\textbf{Q}^{t,\textbf{Q}_0,X}}[|f(u,\xi^{t,\textbf{Q}_0,X}(t))| \exp \{\int\limits_0^t g(s,\xi^{t,\textbf{Q}_0,X}(s))ds\}] \textbf{P}_0(X)(du)] <\infty
\end{equation}
for any $t \in \R_+$, $\textbf{P}_0\in M^1$, $\textbf{Q}_0\in M^2$ (Recall $\textbf{P}^{t,\textbf{P}_0,X}:\omega\mapsto \textbf{P}^{t,\textbf{P}_0,X(\omega)}$ and $\textbf{Q}^{t,\textbf{Q}_0,X}:\omega\mapsto \textbf{Q}^{t,\textbf{Q}_0,X(\omega)}$), and for $\mathbb{P}^{env}$-a.e. $x$ 
\begin{multline}
\int\limits_{S_2}\mathbb{E}_{\textbf{P}^{t,\textbf{P}_0,x}}[f(\zeta^{t,\textbf{P}_0,x}(t),v)] \textbf{Q}_0(x)(dv) = \\
\int\limits_{S_1}\mathbb{E}_{\textbf{Q}^{t,\textbf{Q}_0,x}}[f(u,\xi^{t,\textbf{Q}_0,x}(t)) \exp \{\int\limits_0^t g(s,\xi^{t,\textbf{Q}_0,x}(s))ds\}] \textbf{P}_0(x)(du)
\end{multline}
for any $t\in \R_+$, $\textbf{P}_0\in M^1$ and $\textbf{Q}_0\in M^2$.\\

We say they are dual in average if for any $t\geq 0$, $\textbf{P}_0\in M^1$ and $\textbf{Q}_0\in M^2$ (\ref{FK-finite}) holds, and
\begin{multline}
\mathbb{E}_{\mathbb{P}^{env}}[\int\limits_{S_2}\mathbb{E}_{\textbf{P}^{t,\textbf{P}_0,X}}[f(\zeta^{t,\textbf{P}_0,X}(t),v)] \textbf{Q}_0(X)(dv)] = \\
\mathbb{E}_{\mathbb{P}^{env}}[\int\limits_{S_1}\mathbb{E}_{\textbf{Q}^{t,\textbf{Q}_0,X}}[f(u,\xi^{t,\textbf{Q}_0,X}(t)) \exp \{\int\limits_0^t g(s,\xi^{t,\textbf{Q}_0,X}(s))ds\}] \textbf{P}_0(X)(du)]
\end{multline}
for any $t\in \R_+$, $\textbf{P}_0\in M^1$ and $\textbf{Q}_0\in M^2$.
\begin{remark}
For $i=1,2$, $x\in S'$ and $t\in \R_+$, recall that $G_x^{i,t}:\R_+ \times \mathcal{D}_i \rightarrow \mathcal{B}(S_i)$ is defined by
\begin{equation}
G_x^{i,t}(s,h):=G^{i,t}(X^{-1}(x),s,h).
\end{equation}
For $i=1,2$, $x\in S'$ and $m_0^i \in \mathcal{P}(S_i)$, let $\mathscr G_x^{*i}(m_0^i)$ be the family of martingale problems $\{(G_x^{i,t}, \mathcal{D}_i, m_0^i)\}_{t\in \R_+}$. In fact $\mathscr G^1(M^1)$ and $\mathscr G^2(M^2)$ are dual if and only if for $\mathbb{P}^{env}$-a.e. $x \in S'$, $\mathscr G_x^{*1}(m_0^1)$ and $\mathscr G_x^{*2}(m_0^2)$ are dual for any $m_0^1\in M^1(x)$ and $m_0^2\in M^2(x)$.
\end{remark}
When there exist a time-dependent operator $\mathcal{G}^1:\R_+ \times \mathcal{D}_1 \rightarrow \mathcal{B}(S_1)$ and an operator process
 $G^1:\Omega\times \R_+\times \mathcal{D}_1 \rightarrow \mathcal{B}(S_1)$
 such that for any $t\geq 0$,
 $\mathcal{G}^{1,t}=\mathcal{G}^1$ and $G^{1,t}=G^1$,
 all the martingale problems in the families $\mathscr G^{*1}$ and $\mathscr G^1$ coincide with the ones in the families $\{(\mathcal{G}^1, \mathcal{D}_1, \textbf{P}_0)\}_{\textbf{P}_0\in M^1}$ and $\{(G^1,\mathcal{D}_1, \textbf{P}_0 ,\mathbb{P}^{env})\}_{\textbf{P}_0\in M^1}$, respectively. Because of the importance of these special cases, we give their definitions separately as follows.
\begin{definition}\label{Duality for inhomogeneuos martingale problem}
Suppose $\mathcal{G}^1:\R_+ \times \mathcal{D}_1 \rightarrow \mathcal{B}(S_1)$ is a time dependent linear operator, and let $m_0^i \in \mathcal{P}(S_i)$ for $i=1,2$. The martingale problem $\mathscr G^{*1}=(\mathcal{G}^1, \mathcal{D}_1, m_0^1)$ and the family of martingale problems $\mathscr G^{*2}=\{(\mathcal{G}^{2,t}, \mathcal{D}_2, m_0^2)\}_{t\in \R_+}$ are said to be dual with respect to $(f,g)$, if for each solution $\zeta$ to the martingale problem $\mathscr G^{*1}$, with law $m^1$, and each family of solutions $\{\xi^t\}_{t\in \R_+}$ to the martingale problem $\mathscr G^{*2}$, with respective laws $\{m^{2,t}\}_{t\in \R_+}$, (\ref{FK-finite-1}) holds for any $t\in \R_+$, and
\begin{equation}
\int\limits_{S_2}\mathbb{E}_{m^1}[f(\zeta(t),v)] m_0^2(dv) = 
\int\limits_{S_1}\mathbb{E}_{m^{2,t}}[f(u,\xi^t(t)) \exp \{\int\limits_0^t g(s,\xi^t(s))ds\}] m_0^1(du)
\end{equation}
for any $t\in \R_+$.
\end{definition}
\begin{remark}
If, in addition, we assume that $\mathcal{G}^1:\mathcal{D}_1\rightarrow \mathcal{B}(S_1)$ and there exists a linear operator $\mathcal G^2: \mathcal{D}_2 \rightarrow \mathcal{B}(S_2)$
 such that for any $t\geq 0$,
 $\mathcal{G}^{2,t}=\mathcal{G}^2$, then the duality in Definition \ref{Duality for inhomogeneuos martingale problem} reduces to the classic time-homogeneous duality. In this case, it is still possible to find a family of time-dependent duals (not necessarily one dual).
\end{remark}

For the quenched martingale problem in random environment we have:
\begin{definition}\label{Duality}
Let $f$ and $g$ be as defined above and $G^1:\Omega\times \R_+\times \mathcal{D}_1 \rightarrow \mathcal{B}(S_1)$ be an operator process. We say a family of quenched martingale problems \[\mathscr{G}^1=\mathscr{G}^1(M^1)=\{(G^1,\mathcal{D}_1, \textbf{P}_0 ,\mathbb{P}^{env})\}_{\textbf{P}_0 \in M^1}\] and a family of quenched martingale problems \[\mathscr{G}^2=\mathscr{G}^2(M^2)=\{(G^{2,t}, \mathcal{D}_2, \textbf{Q}_0, \mathbb{P}^{env})\}_{(t,\textbf{Q}_0)\in \R_+\times M^2}\] are (strongly) dual with respect to $(f,g)$ if for each family of solutions $\{\zeta^{\textbf{P}_0,X}\}_{\textbf{P}_0\in M^1}$ to $\mathscr{G}^1$, with the respective families of quenched laws $\{\{\textbf{P}^{\textbf{P}_0,x}\}_{x\in S'}\}_{\textbf{P}_0\in M^1}$, and for each family of solutions $\{\xi^{t,\textbf{Q}_0,X}\}_{(t,\textbf{Q}_0)\in \R_+\times M^2}$ to $\mathscr{G}^2$,
 with respective families of quenched laws $\{\{\textbf{Q}^{t,\textbf{Q}_0,x}\}_{x\in S'}\}_{(t,\textbf{Q}_0)\in \R_+\times M^2}$, we have: for every $t \geq 0$, $\textbf{P}_0\in M^1$ and $\textbf{Q}_0\in M^2$, (\ref{FK-finite}) holds and for  $\mathbb{P}^{env}$-a.e. $x\in S'$
\begin{multline}
\int\limits_{S_2}\mathbb{E}_{\textbf{P}^{\textbf{P}_0,x}}[f(\zeta^{\textbf{P}_0,x}(t),v)] \textbf{Q}_0(x) dv = \\
\int\limits_{S_1}\mathbb{E}_{\textbf{Q}^{t,\textbf{Q}_0,x}}[f(u,\xi^{t,\textbf{Q}_0,x}(t)) \exp \{\int\limits_0^t g(s,\xi^{t,\textbf{Q}_0,x}(s))ds\}] \textbf{P}_0(x) du
\end{multline}
for any $t\in \R_+$, $\textbf{P}_0\in M^1$ and $\textbf{Q}_0\in M^2$.\\

They are said to be dual in average if (\ref{FK-finite}) holds for any $t\geq 0$, $\textbf{P}_0\in M^1$, $\textbf{Q}_0\in M^2$ and
\begin{multline}
\mathbb{E}_{\mathbb{P}^{env}}[\int\limits_{S_2}\mathbb{E}_{\textbf{P}^{\textbf{P}_0,X}}[f(\zeta^{\textbf{P}_0,X}(t),v)] \textbf{Q}_0(X) dv] = \\
\mathbb{E}_{\mathbb{P}^{env}}[\int\limits_{S_1}\mathbb{E}_{\textbf{Q}^{t,\textbf{Q}_0,X}}[f(u,\xi^{t,\textbf{Q}_0,X}(t)) \exp \{\int\limits_0^t g(s,\xi^{t,\textbf{Q}_0,X}(s))ds\}] \textbf{P}_0(X) du]
\end{multline}
for any $t\geq 0$, $\textbf{P}_0\in M^1$ and $\textbf{Q}_0\in M^2$.\\
\end{definition}

\begin{remark}\label{Remark-duality relations-quenched and time-dependnet}
For $i=1,2$, $x\in S'$, and $t\in \R_+$, as we already defined, let $G_x^1:\R_+ \times \mathcal{D}_1 \rightarrow \mathcal{B}(S_1)$ and $G_x^{2,t}:\R_+ \times \mathcal{D}_2 \rightarrow \mathcal{B}(S_2)$ with
\begin{equation}
G_x^1(s,h):=G^1(X^{-1}(x),s,h)
\end{equation}
and
\begin{equation}
G_x^{2,t}(s,h):=G^{2,t}(X^{-1}(x),s,h).
\end{equation}
For $i=1,2$, $x\in S'$ and $m_0^i \in \mathcal{P}(S_i)$, let $\mathscr G_x^{*1}(m_0^1):=(G_x^1, \mathcal{D}_1, m_0^1)$ and $\mathscr G_x^{*2}(m_0^2):=\{(G_x^1, \mathcal{D}_1, m_0^2)\}_{t\in \R_+}$. We have that $\mathscr G^1(M^1)$ and $\mathscr G^2(M^2)$ are dual if and only if for $\mathbb{P}^{env}$-a.e. $x \in S'$, $\mathscr G_x^{*1}(m_0^1)$ and $\mathscr G_x^{*2}(m_0^2)$ are dual for any $m_0^1\in M^1(x)$ and $m_0^2\in M^2(x)$.
\end{remark}


When the family of functions $\{f(.,v): v \in S_2\}$ is sufficiently nice, in other words measure-determining, the duality relation ensures the coincidence of the one-dimensional distributions of any two solutions of the martingale problem, which itself implies the uniqueness of finite dimensional distributions of those which is equivalent to well-posedness of the martingale problem. The following proposition transforms the problem of uniqueness for a martingale problem to the problem of existence of a dual process, or in other words, to the problem of existence of a dual martingale problem. This is a generalization of Lemma 5.5.1\cite{D93}  and Proposition 4.4.7 \cite{EK86-book}.

Let $\mathcal{P}_c(S_1)=\{m\in \mathcal{P}(S_1): m \ has \ a \ compact \ support\}$, and recall that, for $y\in S_2$, $\delta_y$ is the delta measure with the support on $\{y\}$. 

\begin{proposition}\label{Proposition-existence of dual gives uniqueness}
Suppose that, for any $m_0\in \mathcal{P}_c(S_1)$ and $y\in S_2$, the time-dependent martingale problem \[\mathscr{G}^{*1}(m_0)=(\mathcal{G}^1,\mathcal{D}_1,m_0)\] and the families of time-dependent martingale problems \[\mathscr{G}^{*2}(\delta_y)=\{(\mathcal{G}^{2,t},\mathcal{D}_2,\delta_y)\}_{t\in \R_+}\] are dual with respect to $(f,g)$. Consider a collection of measures $\mathcal{M}\subset \mathcal{P}(S_1)$ containing $\mathbb{P}\zeta(s)^{-1}$ for every $s\geq 0$ and every solution $\zeta$ of $\mathscr{G}^{*1}(m_0)$ with $m_0 \in \mathcal{P}_c(S_1)$. Suppose that $\{f(.,y):y\in S_2\}$ is measure-determining on $\mathcal{M}$. If for every $y\in S_2$ and $t\geq 0$ the martingale problem $(\mathcal{G}^{2,t},\mathcal{D}_2,\delta_y)$ has a solution, then for any initial distribution $m_0 \in \mathcal{P}(S_1)$, the time-dependent martingale problem $(\mathcal{G}^1,\mathcal{D}_1,m_0)$ has at most one solution (a unique solution).
\end{proposition}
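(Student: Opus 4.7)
The plan is to reduce uniqueness of the full time-inhomogeneous martingale problem to uniqueness of one-dimensional distributions, which is then delivered by Proposition \ref{Proposition-uniqueness-one dimensional distributions}, and to obtain the latter directly from the duality identity. The crucial observation is that once the initial measure $m_0$, the time $t$, and a ``test point'' $y \in S_2$ are fixed, the right-hand side of the duality relation depends only on $m_0$, $t$, $y$ and on the (assumed-existing) solution of the dual problem $(\mathcal{G}^{2,t},\mathcal{D}_2,\delta_y)$, but is insensitive to the choice of primal solution. So any two primal solutions must integrate $f(\cdot,y)$ at time $t$ to the same number.

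First I handle the compactly supported case. Fix $m_0\in\mathcal{P}_c(S_1)$ and let $\zeta^{(1)},\zeta^{(2)}$ be two solutions of $(\mathcal{G}^1,\mathcal{D}_1,m_0)$ with respective laws $m^{(1)},m^{(2)}$. For each $t\ge 0$ and $y\in S_2$, let $\xi^{t,y}$ be a solution of $(\mathcal{G}^{2,t},\mathcal{D}_2,\delta_y)$, which exists by hypothesis. Applying Definition \ref{Duality for inhomogeneuos martingale problem} with $m_0^2=\delta_y$ to each primal solution gives
\begin{equation*}
\mathbb{E}_{m^{(i)}}[f(\zeta^{(i)}(t),y)]
=\int_{S_1}\mathbb{E}\!\left[f(u,\xi^{t,y}(t))\exp\Big\{\int_0^t g(s,\xi^{t,y}(s))\,ds\Big\}\right] m_0(du),
\end{equation*}
and the right-hand side is independent of $i\in\{1,2\}$. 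Consequently $\mathbb{E}_{m^{(1)}}[f(\zeta^{(1)}(t),y)]=\mathbb{E}_{m^{(2)}}[f(\zeta^{(2)}(t),y)]$ for every $y\in S_2$. By hypothesis both one-dimensional marginals $\mathbb{P}\zeta^{(i)}(t)^{-1}$ lie in the collection $\mathcal{M}$, and $\{f(\cdot,y):y\in S_2\}$ is measure-determining on $\mathcal{M}$; therefore $\mathbb{P}\zeta^{(1)}(t)^{-1}=\mathbb{P}\zeta^{(2)}(t)^{-1}$.

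Next I extend this conclusion from $m_0\in\mathcal{P}_c(S_1)$ to an arbitrary $m_0\in\mathcal{P}(S_1)$ by a regular-conditional-probability argument. Given a solution $\zeta$ with law $\mathbf{P}\in\mathcal{P}(D_{S_1}[0,\infty))$, Polishness of $D_{S_1}[0,\infty)$ provides a regular version $\mathbf{P}(\,\cdot\mid\zeta(0)=x)$ for $m_0$-a.e.\ $x\in S_1$; a standard computation (see e.g.\ \cite{EK86-book}, Theorem 4.4.6) shows these conditional laws are solutions of $(\mathcal{G}^1,\mathcal{D}_1,\delta_x)$, and $\delta_x\in\mathcal{P}_c(S_1)$. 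Applying the compact-support case to any two solutions $\zeta^{(1)},\zeta^{(2)}$ with initial law $m_0$ yields, for $m_0$-a.e.\ $x$ and every $t$, equality of the conditional one-dimensional distributions at time $t$; integrating against $m_0$ gives $\mathbb{P}(\zeta^{(1)}(t)\in A)=\mathbb{P}(\zeta^{(2)}(t)\in A)$ for every $A\in\mathscr{B}(S_1)$. Proposition \ref{Proposition-uniqueness-one dimensional distributions} then upgrades uniqueness of one-dimensional distributions to uniqueness of the martingale problem.

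The main obstacle is the measurability and martingale bookkeeping in the conditioning step: one must verify that for $m_0$-a.e.\ $x$ the regular conditional law is supported on $D_{S_1}[0,\infty)$ and that the tower property preserves the defining martingale property under the filtration, so that the disintegrated measures really are solutions of $(\mathcal{G}^1,\mathcal{D}_1,\delta_x)$. Two minor points are easy to discharge once this is in place: the conditional one-dimensional marginals land in $\mathcal{M}$ because the hypothesis was designed to contain marginals of all solutions with compactly supported (in particular, delta) initial laws; and the measurability of $x\mapsto\mathbf{P}(\zeta(t)\in A\mid \zeta(0)=x)$, needed to integrate against $m_0$, follows from the construction of the regular conditional distribution.
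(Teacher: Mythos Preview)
Your proof is correct and matches the paper's argument exactly in the compactly supported case: both use the duality identity to pin down $\mathbb{E}[f(\zeta(t),y)]$ and then invoke the measure-determining hypothesis on $\mathcal{M}$ to conclude equality of one-dimensional laws.

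The extension to general $m_0\in\mathcal{P}(S_1)$ is where you and the paper diverge. You disintegrate the law of a solution over $\zeta(0)=x$ via regular conditional probabilities, obtaining solutions with initial law $\delta_x\in\mathcal{P}_c(S_1)$, and then integrate back. The paper instead conditions on the event $\{\zeta(0)\in K\}$ for a compact $K$ with $m_0(K)>0$, obtaining a solution with initial law $m_0(\cdot\mid K)\in\mathcal{P}_c(S_1)$; inner regularity of $m_0$ on the Polish space $S_1$ then furnishes compacts $K_n$ with $m_0(K_n)\to 1$, and one passes to the limit. The paper's route is more elementary: conditioning on an $\mathcal{F}_0$-measurable event of positive probability preserves the martingale property by a direct calculation, with no appeal to the existence or measurability of regular conditional distributions (your ``main obstacle''). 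Your route is conceptually cleaner once the machinery of Theorem~4.4.6 in \cite{EK86-book} is granted, and it has the virtue of reducing immediately to the most extreme compactly supported case $\delta_x$. Both arguments ultimately feed into Proposition~\ref{Proposition-uniqueness-one dimensional distributions} in the same way.
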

\begin{proof}
For $m_0\in \mathcal{P}_c(S_1)$, let $\zeta$ and $\zeta'$ be two solutions to $\mathscr{G}^{*1}(m_0)$, and denote by $\xi^{t,y}$ an arbitrary solution to the martingale problem $(\mathcal{G}^{2,t},\mathcal{D}_2,\delta_y)$ for $t\geq 0$ and $y\in S_2$. By the duality relation
\begin{equation}
\mathbb{E}[f(\zeta(t),y)] =
\int\limits_{S_1}\mathbb{E}[f(u,\xi^{t,y}(t)) \exp \{\int\limits_0^t g(s,\xi^{t,y}(s))ds\}] m_0^1(du)=
\mathbb{E}[f(\zeta'(t),y)]
\end{equation}
that, as $\{f(.,y)\}_{y\in S_2}$ is measure-determining on $\mathcal{M}$, implies the uniqueness of one-dimensional distributions, i.e. $\mathbb{P}\zeta(t)^{-1}$ and $\mathbb{P}\zeta'(t)^{-1}$ coincide for any $t\in \R_+$. Hence, $\mathbb{P}\zeta^{-1}=\mathbb{P}\zeta'^{-1}$ that means uniqueness.

For general $m_0\in \mathcal{P}(S_1)$, let $\zeta$ and $\zeta'$ be solutions to the martingale problem $\mathscr{G}^{*1}(m_0)$, and let $K\subset S_1$ be compact with $m_0(K)>0$. Denote by $\zeta_K$ and $\zeta_K'$, the processes $\zeta$ and $\zeta'$ conditioned on $\{\zeta(0)\in K\}$ and $\{\zeta'(0)\in K\}$, respectively. It is clear that $\zeta_K$ and $\zeta_K'$ are solutions to the martingale problem $\mathscr{G}^{*1}(m_0(.|K))$ with
\begin{equation}
m_0(.|K)=\frac{m_0(.\cap K)}{m_0(K)}\in \mathcal{P}_c(S_1).
\end{equation}
Thus, as proved above, $\mathbb{P}\zeta_K(t)^{-1}=\mathbb{P}\zeta_K'(t)^{-1}$ which means
\begin{equation}
\mathbb{P}(\zeta(t)\in A|\zeta(0)\in K)=\mathbb{P}(\zeta'(t)\in A|\zeta'(0)\in K)
\end{equation}
for any Borel measurable subset $A$ of $S_1$. Since $S_1$ is a Polish space, from regularity of $m_0$, there exist a sequence of compact sets $(K_n)_{n\in \N}$ such that $m_0(K_n)\rightarrow 1$ as $n\rightarrow \infty$. Therefore,
\begin{multline}
\mathbb{P}(\zeta(t)\in A)=\lim\limits_{n\rightarrow \infty} \mathbb{P}(\zeta(t)\in A|\zeta(0)\in K_n)=
\lim\limits_{n\rightarrow \infty} \mathbb{P}(\zeta'(t)\in A|\zeta'(0)\in K_n)=\mathbb{P}(\zeta'(t)\in A)
\end{multline}
which implies uniqueness, by Proposition \ref{Proposition-uniqueness-one dimensional distributions}. 
\end{proof}

We easily generalize the last Proposition to the case of quenched martingale problems. For every $x\in S'$, let $M_c(x)=\mathcal{P}_c(S_1)$, and let $M_\delta(x)=\{\delta_y \in \mathcal{P}(S_2): y \in S_2\}$. We define $M_c$ and $M_\delta$ from $M_c(x)$ and $M_\delta(x)$ as already defined.
\begin{proposition}
Suppose that the families of quenched martingale problems \[\mathscr{G}^1(M_c)=\{(G^1,\mathcal{D}_1,\textbf{P}_0, \mathbb{P}^{env})\}_{\textbf{P}_0\in M_c}\] and \[\mathscr{G}^2(M_\delta)=\{(G^{2,t},\mathcal{D}_2,\textbf{P}_0,\mathbb{P}^{env})\}_{(t,\textbf{P}_0)\in \R_+ \times M_\delta}\] are dual with respect to $(f,g)$. Consider a collection of measures $\mathcal{M}\subset \mathcal{P}(S_1)$ such that for $\mathbb{P}^{env}$-a.e. $x\in S'$ contains $\mathbb{P}\zeta^x(s)^{-1}$ for all $s\geq 0$ and all solutions $\zeta^x$ of $(G_x^1,\mathcal{D}_1,\mathbb{P}\zeta^x(0)^{-1})$ for which $\mathbb{P}\zeta^x(0)^{-1}\in \mathcal{P}_c(S_1)$. Suppose that $\{f(.,y):y\in S_2\}$ is measure-determining on $\mathcal{M}$. If for $\mathbb{P}^{env}$-a.e. $x\in S'$, for every $y\in S_2$ and $t\geq 0$ the martingale problem $(G_x^{2,t},\mathcal{D}_2,\delta_y)$ has a solution, then for any initial distribution function $\textbf{P}_0:S' \rightarrow \mathcal{P}(S_1)$ the quenched martingale problem $(G^1,\mathcal{D}_1,\textbf{P}_0, \mathbb{P}^{env})$ has at most one solution (a unique solution).
\end{proposition}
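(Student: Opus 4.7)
The plan is to lift the preceding Proposition to the quenched setting by applying it pointwise in the environment. Fix an initial-distribution function $\textbf{P}_0:S'\to\mathcal{P}(S_1)$ and let $\zeta^X$ and $\zeta'^X$ be any two solutions of the quenched martingale problem $(G^1,\mathcal{D}_1,\textbf{P}_0,\mathbb{P}^{env})$, with respective families of quenched laws $\{\textbf{P}^x\}_{x\in S'}$ and $\{\textbf{P}'^x\}_{x\in S'}$. What must be shown is that $\textbf{P}^x=\textbf{P}'^x$ for $\mathbb{P}^{env}$-a.e.\ $x\in S'$, which by Definition \ref{Quenched martingale problem in r.e.} is precisely the well-posedness of the quenched problem.

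First I would collect the three $\mathbb{P}^{env}$-null exceptional sets that govern the reduction: a set $N_0$ outside of which, by the definition of quenched solution, both $\zeta^x$ and $\zeta'^x$ solve the time-inhomogeneous martingale problem $(G_x^1,\mathcal{D}_1,\textbf{P}_0(x))$; a set $N_1$ outside of which the time-inhomogeneous martingale problem $(G_x^1,\mathcal{D}_1,m_0)$ is dual with respect to $(f,g)$ to the family $\{(G_x^{2,t},\mathcal{D}_2,\delta_y)\}_{t\in\R_+}$ for every $m_0\in \mathcal{P}_c(S_1)$ and $y\in S_2$, furnished by the duality hypothesis together with Remark \ref{Remark-duality relations-quenched and time-dependnet}; and a set $N_2$ outside of which the martingale problem $(G_x^{2,t},\mathcal{D}_2,\delta_y)$ admits a solution for every $t\geq 0$ and $y\in S_2$, furnished by the existence hypothesis. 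Set $N=N_0\cup N_1\cup N_2$, still $\mathbb{P}^{env}$-null.

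For every $x\notin N$, all hypotheses of Proposition \ref{Proposition-existence of dual gives uniqueness} are satisfied for the time-inhomogeneous martingale problem $(G_x^1,\mathcal{D}_1,\textbf{P}_0(x))$, with the measure-determining class $\{f(\cdot,y):y\in S_2\}$ on $\mathcal{M}$. That Proposition therefore yields $\mathbb{P}\zeta^x(t)^{-1}=\mathbb{P}\zeta'^x(t)^{-1}$ for every $t\geq 0$, and Proposition \ref{Proposition-uniqueness-one dimensional distributions} promotes equality of one-dimensional marginals to equality of the full path laws on $D_{S_1}[0,\infty)$, so $\textbf{P}^x=\textbf{P}'^x$ for every $x\notin N$. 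This is exactly $\mathbb{P}^{env}$-a.s.\ uniqueness of the quenched family, which is the required well-posedness.

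The only subtlety I anticipate is packaging the ``$\mathbb{P}^{env}$-a.s.'' exceptional set $N_1$ coming out of the duality hypothesis: that duality is parametrised by $(t,\textbf{P}_0,\textbf{Q}_0)\in \R_+\times M_c\times M_\delta$, which is uncountable, so a priori each parameter choice may carry its own null set. However, Remark \ref{Remark-duality relations-quenched and time-dependnet} already asserts the equivalence between the quenched duality and the $\mathbb{P}^{env}$-a.s.\ time-inhomogeneous duality (valid simultaneously across all parameters), so this technicality is absorbed into the Remark and need not be reproved here. Apart from this bookkeeping point, the proof is a direct quenched transcription of the argument for the preceding Proposition, with every assertion prefaced by ``for $\mathbb{P}^{env}$-a.e.\ $x$.''
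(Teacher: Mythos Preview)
Your proof is correct and follows essentially the same route as the paper: reduce the quenched problem to the time-inhomogeneous problems $(G_x^1,\mathcal{D}_1,\textbf{P}_0(x))$ for $\mathbb{P}^{env}$-a.e.\ $x$, invoke Remark \ref{Remark-duality relations-quenched and time-dependnet} to obtain pointwise duality, and then apply Proposition \ref{Proposition-existence of dual gives uniqueness}. Your explicit collection of the null sets $N_0,N_1,N_2$ and the separate appeal to Proposition \ref{Proposition-uniqueness-one dimensional distributions} are slightly more detailed than the paper (the latter step is already contained in Proposition \ref{Proposition-existence of dual gives uniqueness}), but the argument is the same.
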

\begin{proof}
First note that, as mentioned in Remark \ref{Remark-duality relations-quenched and time-dependnet}, $\mathscr{G}^1(M_c)$ and $\mathscr{G}^2(M_\delta)$ are dual with respect to $(f,g)$ if and only if, for $\mathbb{P}^{env}$-a.e. $x\in S'$, $\mathscr{G}_x^{*1}=(G_x^1,\mathcal{D}_1,m_0)$ and $\mathscr{G}^{*2}(\delta_y)=\{(G_x^{2,t},\mathcal{D}_2,\delta_y)\}_{t\geq 0}$ are dual with respect to $(f,g)$ for every $m_0\in \mathcal{P}_c(S_1)$ and $y\in S_2$. For any initial distribution function $\mathbf{P}_0:S'\rightarrow \mathcal{P}(S_1)$, the quenched martingale problem $(G^1,\mathcal{D}_1,\mathbf{P}_0,\mathbb{P}^{env})$ has at most one solution if and only if $(G_x^1,\mathcal{D}_1,\mathbf{P}_0(x))$ has at most one solution for $\mathbb{P}^{env}$-a.e. $x\in S'$. But the latter follows from  Proposition \ref{Proposition-existence of dual gives uniqueness} and this finishes the proof.
\end{proof}


Now we try to find conditions that guarantee the duality relation between two families of martingale problems. The following proposition is a natural extension of a theorem by D. Dawson and T. Kurtz \cite{DK82} to the case of time-dependent duality relations.

\begin{proposition}\label{Proposition-criteria for duality-time dependent}
Let $S_1$ and $S_2$ be two metric spaces, and let $\mathcal{D}_1\subset \mathcal{B}(S_1)$ and $\mathcal{D}_2\subset \mathcal{B}(S_2)$. Let $\mathcal{G}^1:\R_+\times \mathcal{D}_1 \rightarrow \mathcal{B}(S_1)$ and $\mathcal{G}^{2,t}:\R_+\times \mathcal{D}_2 \rightarrow \mathcal{B}(S_2)$, for $t\geq 0$, be time-dependent linear operators. Consider functions $g\in \mathcal{B}(\R_+\times S_2)$ and $f\in \mathscr{B}(S_1\times S_2)$ such that, for any $u\in S_1$ and $v\in S_2$, $f(.,v)\in \mathcal{D}_1$ and $f(u,.)\in \mathcal{D}_2$, and for any $t\geq s\geq 0$
\begin{equation}
\mathcal{G}^1(s)f\in \mathscr{B}(S_1\times S_2)
\end{equation}
and
\begin{equation}
\mathcal{G}^{2,t}(s)f\in \mathscr{B}(S_1\times S_2),
\end{equation}
where for $(u,v)\in S_1\times S_2$
\begin{equation}
\mathcal{G}^1(s)f(u,v):=\mathcal{G}^1(s)f(.,v)(u)
\end{equation}
and
\begin{equation}
\mathcal{G}^{2,t}(s)f(u,v):=\mathcal{G}^{2,t}(s)f(u,.)(v).
\end{equation}
Let $m_0^i\in \mathcal{P}(S_i)$, for $i=1,2$. Let $\zeta^1$ and $\zeta^{2,t}$ be solutions to martingale problems $(\mathcal{G}^1,\mathcal{D}_1,m_0^1)$ and $(\mathcal{G}^{2,t},\mathcal{D}_2,m_0^2)$, for any $t\geq 0$, respectively. Assume that for any $t\geq 0$, there exists an integrable random variable $C_t$ such that
\begin{enumerate}[(i)]
\item 
\begin{equation}
\sup\limits_{s,r\leq t} |f(\zeta^1(s),\zeta^{2,t}(r))|\leq C_t
\end{equation}
\item 
\begin{equation}
\sup\limits_{s,r\leq t} |\mathcal{G}^1(s)f(\zeta^1(s),\zeta^{2,t}(r))|\leq C_t
\end{equation}
\item\begin{equation}
\sup\limits_{s,r\leq t} |\mathcal{G}^{2,t}(r)f(\zeta^1(s),\zeta^{2,t}(r))|\leq C_t.
\end{equation}
\end{enumerate}
If, for any $t\geq 0$, for a.e. $0\leq s\leq t$
\begin{equation}\label{criteria for duality-time dependent}
\mathcal{G}^1(s)f(.,v)(u)=\mathcal{G}^{2,t}(t-s)f(u,.)(v)+g(t-s,v)
\end{equation}
for every $u\in S_1$ and every $v\in S_2$,
then
\[
\mathscr{G}^{*1}(m_0^1)=(\mathcal{G}^1,\mathcal{D}_1,m_0^1)
\]
and
\[
\mathscr{G}^{*2}(m_0^2)=\{(\mathcal{G}^{2,t},\mathcal{D}_2,m_0^2)\}_{t\geq 0}
\]
are dual with respect to $(f,g)$.
\end{proposition}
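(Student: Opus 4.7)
The plan is the classical interpolation technique of Dawson and Kurtz, suitably modified to accommodate the time-inhomogeneity and the Feynman--Kac exponential weight. Fix $t\ge 0$ and realise the two solutions $\zeta^1$ (solving $(\mathcal{G}^1,\mathcal{D}_1,m_0^1)$) and $\zeta^{2,t}$ (solving $(\mathcal{G}^{2,t},\mathcal{D}_2,m_0^2)$) on a product probability space so that they are independent, with initial laws $m_0^1$ and $m_0^2$ respectively. Introduce, for $0\le s\le t$, the one-parameter interpolant
\[
H(s):=\mathbb{E}\!\left[f\bigl(\zeta^1(s),\zeta^{2,t}(t-s)\bigr)\exp\!\left\{\int_0^{t-s} g\bigl(r,\zeta^{2,t}(r)\bigr)\,dr\right\}\right].
\]
By Fubini (recalling that $\zeta^1(0)\sim m_0^1$ and $\zeta^{2,t}(0)\sim m_0^2$), $H(t)$ is exactly the left-hand side $\int_{S_2}\mathbb{E}_{m_0^1}[f(\zeta^1(t),v)]\,m_0^2(dv)$ of the claimed duality identity, and $H(0)$ is its right-hand side. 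The entire proposition therefore reduces to proving that $H$ is constant on $[0,t]$.

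To establish constancy, form the increment $H(s+h)-H(s)$ and expand it by the product rule applied to its three $s$-dependent factors: $f(\zeta^1(s),\cdot)$, which evolves forward via the $\mathcal{G}^1$-martingale of $\zeta^1$; $f(\cdot,\zeta^{2,t}(t-s))$, which evolves ``backwards in $s$'' via the $\mathcal{G}^{2,t}$-martingale of $\zeta^{2,t}$, producing a sign reversal $-\mathcal{G}^{2,t}(t-s)$; and the exponential Feynman--Kac weight $E_s$, whose derivative in $s$ equals $-g(t-s,\zeta^{2,t}(t-s))\,E_s$ by the fundamental theorem of calculus. Conditioning on the appropriate $\sigma$-algebras, applying Dynkin's formula on $[s,s+h]$ to both martingale problems, and collecting terms shows
\[
\frac{1}{h}\bigl(H(s+h)-H(s)\bigr)\;\xrightarrow[h\to 0]{}\;\mathbb{E}\!\left[\Bigl(\mathcal{G}^1(s)f\,-\,\mathcal{G}^{2,t}(t-s)f\,-\,g(t-s,\cdot)\Bigr)\!\bigl(\zeta^1(s),\zeta^{2,t}(t-s)\bigr)\,E_s\right].
\]
Hypothesis (\ref{criteria for duality-time dependent}), applied pointwise at $(u,v)=(\zeta^1(s),\zeta^{2,t}(t-s))$, forces the bracket to vanish, hence $\frac{d}{ds}H(s)=0$ for a.e.\ $s$.

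The main technical obstacle is rigorising the product-rule manipulation: the interchange of time-integration with expectation, the control of cross-terms of order $O(h^2)$ as $h\downarrow 0$, and the suppression of the local-martingale remainders supplied by Dynkin's formula. This is exactly the role of the integrability hypotheses (i)--(iii): the common envelope $C_t$ dominates $f$, $\mathcal{G}^1(s)f$ and $\mathcal{G}^{2,t}(r)f$ uniformly along both trajectories for $s,r\in[0,t]$, and together with a standing bound on $g$ on $[0,t]\times S_2$ (which can be absorbed into $C_t$ so that the exponential weight $E_s$ is bounded) provides the $L^1$ dominating functions needed for Fubini and dominated convergence. Concretely, I would work on a Riemann partition of $[0,t]$: on each subinterval express the increment of $H$ as a single expectation of the bracket above against the exponential weight, observe that the bracket vanishes identically by (\ref{criteria for duality-time dependent}), and then send the mesh to zero. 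The absolute continuity of $H$ with zero a.e.\ derivative yields $H(0)=H(t)$, which is precisely the duality of $\mathscr{G}^{*1}(m_0^1)$ and $\mathscr{G}^{*2}(m_0^2)$ with respect to $(f,g)$.
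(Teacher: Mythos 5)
Your proposal is correct and follows essentially the same route as the paper: the paper works with the two-variable function $F(s,r)=\mathbb{E}[f(\zeta^1(s),\zeta^{2,t}(r))\exp\{\int_0^r g(u,\zeta^{2,t}(u))du\}]$ on independent copies of the two solutions, computes $F_1$ and $F_2$ via the martingale property and the Feynman--Kac product expansion (with the $O(h^2)$ cross-terms controlled by the envelope $C_t$), and applies Lemma 3.1.1 of Dawson--Kurtz to write $F(t,0)-F(0,t)=\int_0^t(F_1(s,t-s)-F_2(s,t-s))ds$, which vanishes by the intertwining relation. Your one-parameter interpolant $H(s)=F(s,t-s)$ is the same argument in collapsed form, and your appeal to the Riemann-partition decomposition of the increments plus dominated convergence is exactly what the cited Dawson--Kurtz lemmas formalize.
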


\begin{proof}
We assume $\zeta^1$ and $\{\zeta^{2,t}\}_{t\geq 0}$ are independent. For $t\geq 0$ and $s,r\leq t$ define
\begin{equation}
F(s,r)=\mathbb{E}[f(\zeta^1(s),\zeta^{2,t}(r)).\exp(\int\limits_0^rg(u,\zeta^{2,t}(u))du)].
\end{equation}
Therefore, by martingale property
\begin{equation}
\begin{array}{c}
F(s,r)-F(0,r) =  \int\limits_0^s \mathbb{E}[\mathcal{G}^1(u)f(\zeta^1(u),\zeta^{2,t}(r))\exp\{\int\limits_0^r g(\tilde{u},\zeta^{2,t}(\tilde{u}))d\tilde{u}\}]du\\
\end{array}
\end{equation}
Let $F_1$ and $F_2$ be partial derivatives of $F$. Then
\begin{equation}
F_1(s,r)=\mathbb{E}[\mathcal{G}^1(s)f(\zeta^1(s),\zeta^{2,t}(r))\exp\{\int\limits_0^r g(\tilde{u},\zeta^{2,t}(\tilde{u}))d\tilde{u}\}].
\end{equation}
We must also compute $F(s,r)-F(s,0)$. In order to do so, applying lemma $3.1.2$ in \cite{DK82}, for $h\geq 0$ with $r+h\leq t$, we can write
\begin{equation}
\begin{array}{l}
F(s,r+h)-F(s,r)=\\
\mathbb{E}[f(\zeta^1(s),\zeta^{2,t}(r+h))(\int\limits_r^{r+h} g(u,\zeta^{2,t}(u))\exp \{\int\limits_0^u g(\tilde u,\zeta^{2,t}(\tilde u))d\tilde u\}du)]+\\
\mathbb{E}[\int\limits_r^{r+h} \mathcal{G}^{2,t}(u)(\zeta^1(s),\zeta^{2,t}(u))du. \exp\{\int\limits_0^r g(\tilde u,\zeta^{2,t}(\tilde u))d\tilde u\}]=\\
\mathbb{E}[\int\limits_r^{r+h} f(\zeta^1(s),\zeta^{2,t}(u)).g(u,\zeta^{2,t}(u)).\exp(\int\limits_0^u g(\tilde{u},\zeta^{2,t}(\tilde{u}))d\tilde{u})du]+\\
\mathbb{E}[\int\limits_r^{r+h} \int\limits_u^{r+h} \mathcal{G}^{2,t}(v)f(\zeta^1(s),\zeta^{2,t}(v))dv.g(u,\zeta^{2,t}(u).\exp(\int\limits_0^u g(\tilde{u},\zeta^{2,t}(\tilde{u}))d\tilde{u})du]+\\
\mathbb{E}[\int\limits_r^{r+h} \mathcal{G}^{2,t}(u)f(\zeta^1(s),\zeta^{2,t}(u))\exp(\int\limits_0^u g(\tilde{u},\zeta^{2,t}(\tilde{u}))d\tilde{u})du]+\\
\mathbb{E}[\int\limits_r^{r+h} \mathcal{G}^{2,t}(u)f(\zeta^1(s),\zeta^{2,t}(u)).\\
\{\exp(\int\limits_0^r g(\tilde{u},\zeta^{2,t}(\tilde{u}))d\tilde{u})-\exp(\int\limits_0^u g(\tilde{u},\zeta^{2,t}(\tilde{u}))d\tilde{u})\}du].
\end{array}
\end{equation}
Under the assumptions above integrals exist, and the second and the forth terms in the last equation are bounded by 
\begin{equation}
\frac{1}{2}h^2\mathbb{E}[C_t].
\end{equation}
Writing $F(s,r)-F(s,0)$ as
\begin{equation}
\sum\limits_{i=1}^l (F(s,r_i)-F(s,r_{i-1}))
\end{equation}
for $l\in \N$ and an increasing sequence of real numbers $0=r_0<r_1<...<r_l=r$, and letting $l\rightarrow 0$ and $max_i(r_i-r_{i-1}) \rightarrow 0$, we get
\begin{equation}
\begin{array}{llr}
F(s,r)-F(s,0)=\\
\int\limits_0^r \mathbb{E}[\{f(\zeta^1(s),\zeta^{2,t}(u))g(u,\zeta^{2,t}(u))+\mathcal{G}^{2,t}(u)f(\zeta^1(s),\zeta^{2,t}(u))\}.\\
\exp\{\int\limits_0^u g(\tilde{u},\zeta^{2,t}(\tilde{u}))d\tilde{u}\}]du.
\end{array}
\end{equation}
Thus the partial derivative $F_2$ exist for a.e. $r\leq t$ and
\begin{equation}
\begin{array}{l}
F_2(s,r)=\\
\mathbb{E}[\{f(\zeta^1(s),\zeta^{2,t}(r))g(r,\zeta^{2,t}(r))+\mathcal{G}^{2,t}(r)f(\zeta^1(s),\zeta^{2,t}(r))\}.\\
\exp\{\int\limits_0^r g(\tilde{u},\zeta^{2,t}(\tilde{u}))d\tilde{u}\}].
\end{array}
\end{equation}
By Lemma $3.1.1$ \cite{DK82}
\begin{equation}
\begin{array}{l}
F(t,0)-F(0,t)=\\
\int\limits_0^t F_1(s,t-s)-F_2(s,t-s))ds=\\
\int\limits_0^t \{\mathbb{E}[\mathcal{G}^1(s)f(\zeta^1(s),\zeta^{2,t}(t-s))\exp\{\int\limits_0^{t-s} g(\tilde{u},\zeta^{2,t}(\tilde{u}))d\tilde{u}\}]-\\
\mathbb{E}[\{f(\zeta^1(s),\zeta^{2,t}(t-s))g(t-s,\zeta^{2,t}(t-s))+\mathcal{G}^{2,t}(t-s)f(\zeta^1(s),\zeta^{2,t}(t-s))\}.\\
\exp\{\int\limits_0^{t-s} g(\tilde{u},\zeta^{2,t}(\tilde{u}))d\tilde{u}\}]\}dr.

\end{array}
\end{equation}
But this vanishes for a.e. $t\geq 0$ and a.e. $0 \leq s\leq t$ by (\ref{criteria for duality-time dependent}). The statement follows, since $F(t,0)$ and $F(0,t)$ are continuous for $t\in \R_+$.

\end{proof}
The following is an automatic extension of the last proposition to the case of quenched martingale problems.
\begin{proposition}\label{Proposition-criteria for duality-quenched}
Let $S_1$ and $S_2$ be two metric spaces, and let $\mathcal{D}_1\subset \mathcal{B}(S_1)$ and $\mathcal{D}_2\subset \mathcal{B}(S_2)$. Let $G^1:\Omega \times \R_+\times \mathcal{D}_1 \rightarrow \mathcal{B}(S_1)$ and $G^{2,t}:\Omega \times \R_+\times \mathcal{D}_2 \rightarrow \mathcal{B}(S_2)$, for $t\geq 0$, be operator processes. Consider functions $g\in \mathcal{B}(\R_+\times S_2)$ and $f\in \mathscr{B}(S_1\times S_2)$ such that, for any $u\in S_1$ and $v\in S_2$, $f(.,v)\in \mathcal{D}_1$ and $f(u,.)\in \mathcal{D}_2$. Let $\mathbb{P}^{env}\in \mathcal{P}(S')$, and $m_0^i\in \mathcal{P}(S_i)$, for $i=1,2$. Suppose $\{\zeta^{1,x}\}_{x\in S'}$ and $\{\zeta^{2,t,x}\}_{x\in S'}$ are solutions to quenched martingale problems $(G^1,\mathcal{D}_1,m_0^1,\mathbb{P}^{env})$ and $(G^{2,t},\mathcal{D}_2,m_0^2,\mathbb{P}^{env})$, for any $t\geq 0$, respectively.
Assume that for $\mathbb{P}^{env}$-a.e. $x\in S'$, for any $t\geq s\geq 0$
\begin{equation}
G_x^1(s)f\in \mathscr{B}(S_1\times S_2)
\end{equation}
and
\begin{equation}
G_x^{2,t}(s)f\in \mathscr{B}(S_1\times S_2),
\end{equation}
and $\mathbb{P}^{env}$-a.s. for any $t\geq 0$, there exists an integrable random variable $C_t^x$ such that
\begin{enumerate}[(i)]
\item 
\begin{equation}
\sup\limits_{s,r\leq t} |f(\zeta^{1,x}(s),\zeta^{2,t,x}(r))|\leq C_t^x
\end{equation}
\item 
\begin{equation}
\sup\limits_{s,r\leq t} |G_x^1(s)f(\zeta^{1,x}(s),\zeta^{2,t,x}(r))|\leq C_t^x
\end{equation}
\item\begin{equation}
\sup\limits_{s,r\leq t} |G_x^{2,t}(r)f(\zeta^{1,x}(s),\zeta^{2,t,x}(r))|\leq C_t^x
\end{equation}
\end{enumerate}
If for $\mathbb{P}^{env}$-a.e. $x\in S'$, for any $t\geq 0$ and for a.e. $0\leq s\leq t$
\begin{equation}\label{criteria for duality-quenched}
G_x^1(s)f(.,v)(u)=G_x^{2,t}(t-s)f(u,.)(v)+g(t-s,v)
\end{equation}
for every $u\in S_1$ and every $v\in S_2$,
then
\[
\mathscr{G}^{1}(m_0^1)=(G^1,\mathcal{D}_1,m_0^1,\mathbb{P}^{env})
\]
and
\[
\mathscr{G}^{2}(m_0^2)=\{(G^{2,t},\mathcal{D}_2,m_0^2,\mathbb{P}^{env})\}_{t\geq 0}
\]
are dual with respect to $(f,g)$.

\end{proposition}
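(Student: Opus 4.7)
The plan is to reduce the statement to its deterministic counterpart, Proposition \ref{Proposition-criteria for duality-time dependent}, applied fibre-by-fibre over the environment space. Indeed, as recorded in Remark \ref{Remark-duality relations-quenched and time-dependnet}, quenched duality is equivalent to the time-inhomogeneous duality of every frozen-environment pair for $\mathbb{P}^{env}$-a.e.\ $x\in S'$, so the strategy is to fix such an $x$, verify that the hypotheses of Proposition \ref{Proposition-criteria for duality-time dependent} hold for the frozen operators $G_x^1$ and $G_x^{2,t}$, and then reassemble the information over $x$.

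First I would extract a single Borel set $N\subset S'$ of full $\mathbb{P}^{env}$-measure on which, simultaneously, the measurability conditions $G_x^1(s)f,\,G_x^{2,t}(s)f\in\mathscr{B}(S_1\times S_2)$ hold, the pointwise generator identity (\ref{criteria for duality-quenched}) holds for every $t\geq 0$ and a.e.\ $s\in[0,t]$, and the three uniform bounds (i)--(iii) hold with an integrable $C_t^x$. The $\mathbb{P}^{env}$-a.s.\ assumptions of the proposition give each of these individually; taking a countable intersection over rational $t$ (and exploiting that the two sides of (\ref{criteria for duality-quenched}) are linear in $f$ and fixed in $u,v$) produces the desired single set $N$. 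For each $x\in N$ the frozen operators $G_x^1:\R_+\times\mathcal{D}_1\to\mathcal{B}(S_1)$ and $G_x^{2,t}:\R_+\times\mathcal{D}_2\to\mathcal{B}(S_2)$ are deterministic time-dependent linear operators, the quenched processes $\zeta^{1,x}$ and $\zeta^{2,t,x}$ under their respective quenched laws $\textbf{P}^x$ and $\textbf{Q}^{t,x}$ solve the time-inhomogeneous martingale problems $(G_x^1,\mathcal{D}_1,m_0^1)$ and $(G_x^{2,t},\mathcal{D}_2,m_0^2)$, and the hypotheses of Proposition \ref{Proposition-criteria for duality-time dependent} are fulfilled verbatim. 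That proposition then yields, for every $x\in N$, the pointwise duality identity
\[
\int_{S_2}\mathbb{E}_{\textbf{P}^x}[f(\zeta^{1,x}(t),v)]\,m_0^2(dv)=\int_{S_1}\mathbb{E}_{\textbf{Q}^{t,x}}\bigl[f(u,\zeta^{2,t,x}(t))\exp\{\textstyle\int_0^t g(s,\zeta^{2,t,x}(s))\,ds\}\bigr]\,m_0^1(du),
\]
together with the finiteness of both sides.

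The last ingredient is the global integrability condition (\ref{FK-finite}). The right-hand side above is dominated $x$-pointwise by $\mathbb{E}_{\textbf{P}^x\otimes\textbf{Q}^{t,x}}[C_t^x]$, which is finite for $\mathbb{P}^{env}$-a.e.\ $x$; by the measurability of $x\mapsto\textbf{P}^x(B)$ and $x\mapsto\textbf{Q}^{t,x}(B)$ recorded in Definition \ref{quenched process} the map $x\mapsto\mathbb{E}_{\textbf{P}^x\otimes\textbf{Q}^{t,x}}[C_t^x]$ is Borel, and a Fubini argument against $\mathbb{P}^{env}$ delivers the finite expectation demanded by (\ref{FK-finite}). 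Invoking Remark \ref{Remark-duality relations-quenched and time-dependnet} once more converts the $x$-pointwise duality into the asserted quenched duality of $\mathscr{G}^1(m_0^1)$ and $\mathscr{G}^2(m_0^2)$ with respect to $(f,g)$. I do not expect any substantive mathematical obstacle here: the only delicate point is the bookkeeping of $\mathbb{P}^{env}$-null sets and the measurability step needed for Fubini, both of which are routine once Definition \ref{quenched process} is in force; everything genuinely analytic (the differentiation-in-$(s,r)$ argument of Dawson--Kurtz type) is already absorbed into Proposition \ref{Proposition-criteria for duality-time dependent}.
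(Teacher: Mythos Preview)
Your approach is exactly that of the paper, whose proof consists of a single sentence: apply Proposition~\ref{Proposition-criteria for duality-time dependent} fibre-by-fibre over $x\in S'$ and invoke Remark~\ref{Remark-duality relations-quenched and time-dependnet} to convert the resulting pointwise duality into quenched duality. Your more careful bookkeeping of the $\mathbb{P}^{env}$-null sets is fine and matches the intended argument.

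One minor caveat on your treatment of (\ref{FK-finite}). First, the right-hand side is dominated not by $\mathbb{E}_{\textbf{P}^x\otimes\textbf{Q}^{t,x}}[C_t^x]$ alone but by $e^{t\|g\|_\infty}\,\mathbb{E}_{\textbf{P}^x\otimes\textbf{Q}^{t,x}}[C_t^x]$; this is harmless since $g\in\mathcal{B}(\R_+\times S_2)$ is bounded, but you should say so. Second, and more substantively, knowing that $\mathbb{E}_{\textbf{P}^x\otimes\textbf{Q}^{t,x}}[C_t^x]<\infty$ for $\mathbb{P}^{env}$-a.e.\ $x$ does \emph{not} imply $\int\mathbb{E}_{\textbf{P}^x\otimes\textbf{Q}^{t,x}}[C_t^x]\,\mathbb{P}^{env}(dx)<\infty$; Tonelli gives equality of iterated integrals, not finiteness. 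The paper's one-line proof does not address (\ref{FK-finite}) at all, and in its only application (Proposition~\ref{Proposition-duality relation FVRE}, where $g\equiv 0$ and $\tilde{H}$ is uniformly bounded by $\|\hat{\psi}_0\|_\infty$) the condition is trivial. So this is a lacuna in the hypotheses of the proposition as stated rather than a flaw in your reduction strategy.
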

\begin{proof}
The proof is an automatic application of Proposition \ref{Proposition-criteria for duality-time dependent} and Remark \ref{Remark-duality relations-quenched and time-dependnet}.
\end{proof}

\section{A function-valued dual for FVRE}\label{Section-FV Duality}
The goal of this section is to construct a dual process in r.e. $e$ which is a fitness process (not necessarily Markov).
 Recall that $\mathbb{P}^{env}$ is the law of $e$. For any $\tilde{e}\in D_E[0,\infty)$, we define the quenched dual family $\{\psi^{t,\tilde{e}}\}_{t\in \R_+}$ of Markov processes with the deterministic environment $\tilde{e} \in D_E[0,\infty)$, where $\psi^{t,\tilde{e}}=(\psi_s^{t,\tilde{e}})_{s\in \R_+}$. The process $\psi^{t,\tilde{e}}$ is a Markov jump process with the state space $\mathcal{C}^*=\bigcup\limits_{n\in \N}\mathcal{C}_n(I^\N)$ without any jumps after time $t\in \R_+$, i.e. $\psi^t(t+s)=\psi^t(t)$ for any $s\geq 0$ (The process stays forever in its location at time $t$). Also, as before, we assume that $e$ is independent of Poisson times of jumps, the mutation kernel, and the initial distribution of the process. In order to define the transition probabilities of $\psi^{t,\tilde{e}}$ at times of jumps, we need the following notations. For $\underline{a}=(a_1,a_2,...)\in I^\N$, define the insertion function $\varrho_i^{ins}:I^\N \rightarrow I^\N$ to be
\begin{equation}
\varrho_i^{ins}(\underline{a})=(a_{j-\textbf{1}_{\{j>i\}}})_{j\in \N},
\end{equation}
where the value of $\textbf{1}_{\{j>i\}}$ is $1$ if $j>i$, and it is $0$, otherwise.
Also, the deletion function $\varrho_i^{del}:I^\N \rightarrow I^\N$ is defined by
\begin{equation}
\varrho_i^{del}(\underline{a})=(a_{j+\textbf{1}_{\{j>i\}}})_{j\in \N}.
\end{equation}
For $n\in\N$, the process $\psi^{t,\tilde{e}}$ jumps from state $f\in \mathcal{C}_n(I^\N) \subset \mathcal{C}^*$ to
\begin{equation}
f\circ \sigma_{ij}\circ \varrho_j^{ins} \ \ \ \ at \ rate \ \frac{\gamma}{2} \ for \ i,j=1,...,n \ (resampling)
\end{equation}
to
\begin{equation}
B_i'f \ \ \ \ at \ rate \ \beta' \ for \ i=1,..,n \ (parent-independent \ mutation)
\end{equation}
to
\begin{equation}
B_i''f \ \ \ \ at \ rate \ \beta'' \ for \ i=1,..,n \ (parent-dependent \ mutation)
\end{equation}
(Recall $\beta=\beta'+\beta''$ and also recall the definition from (\ref{mutation integral-2}) and (\ref{mutation integral-3})),\\
to
\begin{equation}\label{equation-dual selection }
\tilde{e}_i(t-s) f+(1-\tilde{e}_i(t-s))(f \circ \varrho_i^{del}) \ \ \ \ at \ rate \ \alpha \ for \ i=1,...,n \ (selection)
\end{equation}
for a jump occurring at time $s\leq t$.

Having the fitness Markov $e$ with law $\mathbb{P}^{env}$ which can be considered as a $D_E[0,\infty)$-valued random variable, we can think of a family of stochastic processes in random environment $e$, namely $\{\psi^{t,e}\}_{t\in \R_+}$. In fact, for $t\geq 0$, $\psi^{t,e}$ is a stochastic process in random environment $e$ whose quenched processes, $\{\psi^{t,\tilde{e}}\}_{\tilde{e}\in D_E[0,\infty)}$, are defined as above.

We define the duality function $\tilde H:\mathcal{P}(I)\times \mathcal{C}^* \rightarrow \R$ by
\begin{equation}
\tilde H(m,f)=<m^{\otimes \N}, f>
\end{equation}
for $f \in \mathcal{C}^*$ and $m\in \mathcal{P}(I)$.

For $f\in \mathcal{C}_n(I^\N) \subset \mathcal{C}^*$, in fact $\tilde{H}(m,f)=\tilde\Phi^f(m)$, where by definition $\Phi^f\in \tilde{\mathfrak{F}}$. Note that $\tilde{H}$ is a continuous function and hence measureable function but not bounded. The following is automatic.
\begin{proposition}\label{Proposition-H is measure-determining}
The collection of functions $\{\tilde H(.,f):f\in \mathcal{C}^*\}$ is measure-determining on $\mathcal{P}(I)$.
\end{proposition}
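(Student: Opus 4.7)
The plan is to observe that this statement is essentially an immediate consequence of Proposition \ref{Proposition-algebra separates points-FV}, together with the compactness of the type space $I$. First I would note that since $I$ is compact, $I^{\N}$ is compact, and any $f \in \mathcal{C}_n(I^{\N})$ is automatically bounded, so $\mathcal{C}_n(I^{\N}) = \bar{\mathcal{C}}_n(I^{\N})$. Consequently,
\[
\mathcal{C}^* = \bigcup_{n \in \N} \mathcal{C}_n(I^{\N}) = \bigcup_{n \in \N} \bar{\mathcal{C}}_n(I^{\N}),
\]
and for every $f \in \mathcal{C}^*$ the function $m \mapsto \tilde{H}(m,f) = \langle m^{\otimes \N}, f\rangle$ coincides with the polynomial $\tilde{\Phi}^f \in \tilde{\mathfrak{F}} = \tilde{\mathfrak{F}}^0$.

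The key step is then just the inclusion
\[
\tilde{\mathfrak{F}} \subseteq \{\tilde{H}(\cdot, f) : f \in \mathcal{C}^*\}.
\]
Since Proposition \ref{Proposition-algebra separates points-FV} already asserts that $\tilde{\mathfrak{F}}$ is a measure-determining algebra on $\mathcal{P}(I)$, any superset of $\tilde{\mathfrak{F}}$ inherits the measure-determining property, which gives the claim.

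As a cleaner alternative that avoids invoking the algebra structure, I would restrict attention to $\mathcal{C}_1(I^{\N}) \subset \mathcal{C}^*$. A function in $\mathcal{C}_1(I^{\N})$ is of the form $f(x_1,x_2,\ldots) = g(x_1)$ for some $g \in \mathcal{C}(I)$, and then $\tilde{H}(m,f) = \langle m, g\rangle$. Hence
\[
\{\tilde{H}(\cdot, f) : f \in \mathcal{C}^*\} \supseteq \{m \mapsto \langle m, g\rangle : g \in \mathcal{C}(I)\},
\]
and because $I$ is a compact (hence Polish) metric space, $\mathcal{C}(I) = \bar{\mathcal{C}}(I)$ is well-known to be measure-determining on $\mathcal{P}(I)$ (e.g.\ via the Riesz representation theorem, or Theorem 3.4.5 of \cite{EK86-book}). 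This concludes the proof.

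There is no real obstacle here: the content of the proposition is entirely contained in earlier results, and the only subtlety worth spelling out is the trivial identification of bounded continuous functions with continuous functions on the compact space $I^{\N}$, which is why the unboundedness of $\tilde{H}$ on all of $\mathcal{P}(I) \times \mathcal{C}^*$ mentioned in the preceding paragraph does not obstruct measure-determination on $\mathcal{P}(I)$.
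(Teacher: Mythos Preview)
Your first argument is correct and is exactly the paper's own proof: the set $\{\tilde H(\cdot,f):f\in\mathcal C^*\}$ coincides with (or at least contains) $\tilde{\mathfrak F}$, which was shown to be measure-determining in Proposition~\ref{Proposition-algebra separates points-FV}.

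Your proposed ``cleaner alternative,'' however, is wrong. The phrase ``measure-determining on $\mathcal P(I)$'' here means that the functions $\tilde H(\cdot,f):\mathcal P(I)\to\R$ determine probability measures \emph{on} $\mathcal P(I)$, i.e.\ elements of $\mathcal P(\mathcal P(I))$; this is exactly what Proposition~\ref{Proposition-existence of dual gives uniqueness} requires with $S_1=\mathcal P(I)$. Restricting to $f\in\mathcal C_1(I^{\N})$ yields only the \emph{linear} functionals $m\mapsto\langle m,g\rangle$, and these do \emph{not} determine elements of $\mathcal P(\mathcal P(I))$. For a counterexample take $I=\{0,1\}$: the law $\delta_{m_{1/2}}$ (point mass at the uniform measure on $I$) and the law $\tfrac12(\delta_{\delta_0}+\delta_{\delta_1})$ assign the same expectation $\tfrac12(g(0)+g(1))$ to every such linear functional, yet they are distinct random measures. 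The fact you cite---that $\mathcal C(I)$ determines elements of $\mathcal P(I)$---is one level down and does not suffice. The algebra structure (equivalently, access to polynomials of all degrees) is essential and cannot be bypassed.
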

\begin{proof}
The set in the statement of proposition is in fact $\tilde{\mathfrak{F}}$ and, it was already proved that $\tilde{\mathfrak{F}}$ is measure-determining.
\end{proof}
Before proving the duality relation, we find the generator of $\psi^{t,\tilde{e}}$ for $t\geq 0$. For $\tilde{m}\in \mathcal{P}(I^\N)$, define $\phi^{\tilde{m}}\in \mathcal{C}(\mathcal{C}^*)$ (remember $\mathcal{C}^*$ is equipped with sup-norm topology) as
\begin{equation}
\phi^{\tilde{m}}(f)=<\tilde{m},f> \ \ f\in \mathcal{C}^*.
\end{equation}
From construction, for $t\geq 0$ and $\tilde{m}\in \mathcal{P}(I^{\N})$, the time-dependent generator of $\psi^{t,\tilde{e}}$ on function $\phi^{\tilde{m}}$, namely $\mathcal{G}_{\tilde{e}}^{*t}$, is computed as follows. For $0\leq s\leq t$ and $f\in \mathcal{C}_n(I^\N)$ for $n\in \N$
\begin{equation}
\begin{array}{l}
\mathcal{G}_{\tilde{e}}^{*t}(s)\phi^{\tilde{m}}(f)=\frac{\gamma}{2}\sum\limits_{0\leq i,j\leq n}<\tilde{m},f\circ \sigma_{ij}\circ \varrho_j^{ins}-f> \\
+\beta'\sum\limits_{i=1}^n<\tilde{m},B_i'f-f>+\beta''\sum\limits_{i=1}^n<\tilde{m},B_i''f-f>\\
+\mathcal{G}_{\tilde{e}}^{*t,sel}(s)\phi^{\tilde{m}}(f)
\end{array}
\end{equation}
To continue, we must compute the last term that is the generator of the dual process $\psi^{t,\tilde{e}}$ corresponding to the selection jumps (\ref{equation-dual selection }). Recall that the probability measure $P_{s,s+r}^*$ on $[s,s+r]$, for any $t-s\geq r\geq 0$, is the law of choosing one Poisson point in the interval $[s,s+r]$ conditioned on having only one Poisson point in that interval. For $x\in I^\N$, let
\begin{equation}
\overleftarrow{e}_i^{s,s+r}(x)=\int\limits_s^{s+r} \tilde{e}_i(t-u)(x) P_{s,s+r}^*(du).
\end{equation}
As before, since $\tilde{e}\in D_E[0,\infty)$, the left limit of $\tilde{e}$ exists for any $i\leq n$ and any time $s\in \R_+$ and
\begin{equation}
\inf\limits_{s\leq u\leq s+r}\tilde{e}_i(t-u)(x)\leq \overleftarrow{e}_i^{t,t+s}(x)\leq \sup\limits_{s\leq u\leq s+r}\tilde{e}_i(t-u)(x). 
\end{equation}
Therefore
\begin{equation}
\lim\limits_{r\rightarrow 0} \int\limits_s^{s+r} \tilde{e}(t-u)(.) P_{s,s+r}^*(du)=\tilde{e}(s^-)(.)
\end{equation}
pointwisely, (and furthermore in sup-norm topology), where
\begin{equation}
\begin{array}{l}
\tilde{e}_i(s^-)(x)=\lim\limits_{u\rightarrow s^-} \tilde{e}_i(u)(x).
\end{array}
\end{equation}
Thus
\begin{equation}
\begin{array}{l}
\mathcal{G}_{\tilde{e}}^{*t,sel}(s)\phi^{\tilde{m}}(f)=\lim\limits_{r\rightarrow 0} \frac{1}{r}\{\alpha r \sum\limits_{i=1}^n \int\limits_s^{s+r}<\tilde{m},\tilde{e_i}(t-u)f+\\
(1-\tilde{e}_i(t-u))f\circ \varrho_i^{del}> P_{s,s+r}^*(du)+(1-\alpha r)\sum\limits_{i=1}^n<\tilde{m},f>-\sum\limits_{i=1}^n <\tilde{m},f>\}=\\
\lim\limits_{r\rightarrow 0} \alpha\sum\limits_{i=1}^n<\tilde{m},f\int\limits_s^{s+r}\tilde{e_i}(t-u)P_{s,s+r}^*(du)+f\circ \varrho_i^{del} (1-\int\limits_s^{s+r}\tilde{e_i}(t-u)P_{s,s+r}^*(du)>)\\
-\alpha \sum\limits_{i=1}^n <\tilde{m},f>=\\
\alpha \sum\limits_{i=1}^n <\tilde{m},f\tilde{e}_i(s^-)+f\circ \varrho_i^{del}(1-\tilde{e}_i(s^-))-f>=\\
\alpha \sum\limits_{i=1}^n <\tilde{m},f\tilde{e}_i(s^-)-f\circ \varrho_i^{del}\tilde{e}_i(s^-)>+\alpha\sum\limits_{i=1}^n (<\tilde{m},f\circ \varrho_i^{del}>-<\tilde{m},f>).
\end{array}
\end{equation}
If $\tilde{m}=m^{\otimes\N}$ for $m\in \mathcal{P}(I)$, then the right hand side of the equality will be
\begin{equation}
\alpha \sum\limits_{i=1}^n <m^{\otimes\N},f\tilde{e}_i(s^-)-f\tilde{e}_{n+1}(s^-)>
\end{equation}
\begin{remark}
As we already mentioned, assuming that $e$ has sample paths in $D_E[0,\infty)$ is essential in order to compute the operator process. Also note that under this assumption the operator processes $(\mathcal{G}_s^e)_{s\geq 0}$ and $(\mathcal{G}_s^{*t,e})_{s\geq 0}$ take sample paths in $D_E[0,\infty)$, a.s..
\end{remark}

Applying above computations on $\tilde{H}(.,.)$, we have

\begin{equation}
\begin{array}{l}
\mathcal{G}_{\tilde{e}}^{*t}(s)\tilde{H}(m^{\otimes\N},.)(f)=\frac{\gamma}{2}\sum\limits_{0\leq i,j\leq n}<m^{\otimes\N},f\circ \sigma_{ij}\circ \varrho_j^{ins}-f> \\
+\beta'\sum\limits_{i=1}^n<m^{\otimes\N},B_i'f-f>+\beta''\sum\limits_{i=1}^n<m^{\otimes\N},B_i''f-f>\\
+\alpha \sum\limits_{i=1}^n <m^{\otimes\N},f\tilde{e}_i(s^-)-f\tilde{e}_{n+1}(s^-)>.
\end{array}
\end{equation}
Because of exchangeability, we can rewrite the first term of the last equation as
\begin{equation}
\frac{\gamma}{2}\sum\limits_{0\leq i,j\leq n}<m^{\otimes\N},f\circ \sigma_{ij}\circ \varrho_j^{ins}-f>=\frac{\gamma}{2}\sum\limits_{0\leq i,j\leq n}<m^{\otimes\N},f\circ \sigma_{ij}-f>.
\end{equation}
On the other hand, for the function $\tilde{H}(.,f)=\Phi^f(.)$, where $f\in \mathcal{C}_n(I^\N)$, we already saw that
\begin{equation}
\begin{array}{l}
\tilde{\mathcal{G}}_{\tilde{e}} \tilde{H}(.,f)(m)=\\
\frac{\gamma}{2}\sum\limits_{0\leq i,j\leq n}<m^{\otimes\N},f\circ \sigma_{ij}-f>+\\
+\beta'\sum\limits_{i=1}^n<m^{\otimes\N},B_i'f-f>+\beta''\sum\limits_{i=1}^n<m^{\otimes\N},B_i''f-f>\\
+\alpha \sum\limits_{i=1}^n <m^{\otimes\N},f\tilde{e}_i(s)-f\tilde{e}_{n+1}(s)>.
\end{array}
\end{equation}
Since $\tilde{e}$  is in $D_E[0,\infty)$, it is right continuous with left limit. Also the number of discontinuity points of $\tilde{e}$ is at most countable. For any $t\geq 0$, this yields the equality
\begin{equation}\label{the equality for operators of FVRE and dual-in quenched env}
\tilde{\mathcal{G}}_{\tilde{e}}(s)=\mathcal{G}_{\tilde{e}}^{*t}(t-s)
\end{equation}
for every $s\leq t$ except possibly at most countable points of discontinuity of $\tilde{e}$. Furthermore, constructing the corresponding operator process of $\psi^{t,\tilde{e}}$, namely $\tilde{\mathcal{G}}^{*t,e}$, which is consistent with $e$, for any $t\geq 0$, there exist at most countable times $s\leq t$ for which
\begin{equation}\label{the equality for operators of FVRE and dual}
\tilde{\mathcal{G}}^e(s)=\mathcal{G}^{*t,e}(t-s)
\end{equation}
does not hold almost surely.

In fact we can deduce the duality relation between FV in environment $\tilde{e}$ and jump Markov processes $\{\psi^{t,\tilde{e}}\}_{t\geq 0}$. Before doing this, we need to know an easy property of the dual processes whose proof will be postponed, namely, for $t\geq 0$, starting at the state $\hat{\psi}_0\in \mathcal{C}^*$, 
$\|\psi_s^{t,\tilde{e}}\|_{\infty}$ ($\|.\|_\infty$-supnorm on $\mathcal{C}^*$) remains bounded by $\|\hat{\psi}_0\|_{\infty}$ for any $s\geq 0$, a.s.. The following proposition states that for any $\tilde{e} \in D_E[0,\infty)$ and $t\geq 0$, conditioning on $\mu_0^{\tilde{e}}=m_0\in \mathcal{P}(I)$ and $\psi_0^{t,\tilde{e}}=\hat{\psi}_0\in \mathcal{C}^*$,   the duality relation holds for $\mu^{\tilde{e}}$ and $\{\psi^{t,\tilde{e}}\}_{t\in \R_+}$. 
\begin{proposition}\label{Proposition-duality relation FVRE}
For every $\tilde{e}\in  D_E[0,\infty)$, $m_0\in \mathcal{P}(I)$, and $\hat{\psi}_0\in \mathcal{C}^*$, the time-dependent martingale problem
\begin{equation}
(\tilde{\mathcal{G}}_{\tilde{e}},\tilde{\mathfrak{F}},\delta_{m_0})
\end{equation}
and one-parameter family of time-dependent martingale problems
\begin{equation}
\{(\mathcal{G}_{\tilde{e}}^{*t},\{<m^{\otimes \N},.>\}_{m\in \mathcal{P}(I)}, \delta_{\hat{\psi}_0})\}_{t\geq 0}
\end{equation}
are dual with respect to $(\tilde{H},0)$, that is for 
every $t\geq 0$
\begin{equation}
\mathbb{E}^{m_0}[\tilde H(\mu_t^{\tilde{e}},\hat{\psi}_0)]=\mathbb{E}^{\hat{\psi}_0}[\tilde H(m_0,\psi_t^{t,\tilde{e}})]
\end{equation}
\end{proposition}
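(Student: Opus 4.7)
The plan is to apply Proposition~\ref{Proposition-criteria for duality-time dependent} with $S_1=\mathcal{P}(I)$, $S_2=\mathcal{C}^*$, time-dependent generators $\mathcal{G}^1=\tilde{\mathcal{G}}_{\tilde e}$ on $\mathcal{D}_1=\tilde{\mathfrak{F}}$ and $\mathcal{G}^{2,t}=\mathcal{G}_{\tilde e}^{*t}$ on $\mathcal{D}_2=\{\langle m^{\otimes\N},\cdot\rangle:m\in\mathcal{P}(I)\}$, duality pairing $f(m,\hat f):=\tilde H(m,\hat f)=\langle m^{\otimes\N},\hat f\rangle$, weight $g\equiv 0$, and driving processes $\zeta^1=\mu^{\tilde e}$ with initial law $\delta_{m_0}$ and $\zeta^{2,t}=\psi^{t,\tilde e}$ with initial law $\delta_{\hat\psi_0}$. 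Because $g\equiv 0$ the exponential weight in the conclusion of that proposition disappears, and what remains is precisely the asserted identity $\mathbb{E}^{m_0}[\tilde H(\mu_t^{\tilde e},\hat\psi_0)]=\mathbb{E}^{\hat\psi_0}[\tilde H(m_0,\psi_t^{t,\tilde e})]$.

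The generator identity~\eqref{criteria for duality-time dependent} has essentially already been established: equation~\eqref{the equality for operators of FVRE and dual-in quenched env} gives $\tilde{\mathcal{G}}_{\tilde e}(s)=\mathcal{G}_{\tilde e}^{*t}(t-s)$ as operators whenever these are evaluated on the pairings $\tilde H(\cdot,\hat f)$ and $\tilde H(m,\cdot)$ respectively, for every $s\le t$ outside the at-most-countable discontinuity set of $\tilde e$, a Lebesgue-null set---which is all that the criterion requires. Only the domination conditions (i)--(iii) of Proposition~\ref{Proposition-criteria for duality-time dependent} are left to verify. For (i), combining $|\tilde H(m,\hat f)|\leq\|\hat f\|_\infty$ with the postponed auxiliary estimate $\|\psi_r^{t,\tilde e}\|_\infty\leq\|\hat\psi_0\|_\infty$ yields $\sup_{s,r\le t}|\tilde H(\mu_s^{\tilde e},\psi_r^{t,\tilde e})|\leq\|\hat\psi_0\|_\infty$. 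For (ii)--(iii), a direct term-by-term bound on the three contributions of each generator, applied to a polynomial $\hat f$ of degree $n$, gives
\[
|\tilde{\mathcal{G}}_{\tilde e}(s)\tilde H(\cdot,\hat f)(m)|\,,\ \ |\mathcal{G}_{\tilde e}^{*t}(r)\tilde H(m,\cdot)(\hat f)|\ \leq\ K\bigl(\gamma n^2+(\alpha+\beta)n\bigr)\|\hat f\|_\infty,
\]
so the dominating variable $C_t:=K'\bigl(1+\sup_{r\le t}n_r^2\bigr)\|\hat\psi_0\|_\infty$, with $n_r:=\deg(\psi_r^{t,\tilde e})$, will serve provided $\mathbb{E}[\sup_{r\le t}n_r^2]<\infty$.

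The main (and only nontrivial) obstacle is this moment bound on the degree. Inspecting the four jump types of $\psi^{t,\tilde e}$: the resampling jump $f\mapsto f\circ\sigma_{ij}\circ\varrho_j^{ins}$ never raises the degree (the insertion duplicates a coordinate, so $f$ is then evaluated on one fewer free variable---this is the coalescence dual to Fleming--Viot resampling), and both mutation jumps $f\mapsto B_i'f,\,B_i''f$ preserve the degree; only the selection jump, through its $f\circ\varrho_i^{del}$ summand, can raise the degree by one, and this happens with total rate $\alpha n$. Consequently $(n_r)_{r\ge 0}$ is stochastically dominated by a pure-birth Yule process started at $n(\hat\psi_0)$ with per-lineage rate $\alpha$, whose supremum over any bounded interval has finite moments of every order. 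Once this domination is recorded, hypotheses (i)--(iii) are all verified, and Proposition~\ref{Proposition-criteria for duality-time dependent} yields the duality identity.
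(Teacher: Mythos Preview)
Your proof is correct and follows the same route as the paper: verify the hypotheses of Proposition~\ref{Proposition-criteria for duality-time dependent} using the generator identity~\eqref{the equality for operators of FVRE and dual-in quenched env} and the a.s.\ sup-norm bound $\|\psi_r^{t,\tilde e}\|_\infty\le\|\hat\psi_0\|_\infty$. You are in fact more careful than the paper on one point: the paper asserts a constant bound $C\|\hat\psi_0\|_\infty$ for the generator terms, glossing over the fact that the bound scales like $n_r^2$ in the (random) degree $n_r$ of $\psi_r^{t,\tilde e}$; your Yule-process domination argument supplies exactly the missing integrability of $\sup_{r\le t}n_r^2$ needed to make $C_t$ an honest integrable random variable.
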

\begin{remark}
The statement of the theorem is stronger than Definition \ref{Duality} as it guarantees the duality relation for every fixed (quenched) environment. Also, from the proof, it will be clear that, for any integrable $\mathcal{C}^*$-valued random variable $\underline{\psi}$, the duality relation holds between $(\tilde{\mathcal{G}}_{\tilde{e}},\tilde{\mathfrak{F}},\delta_{m_0})$ and
\begin{equation}
\{(\mathcal{G}_{\tilde{e}}^{*t},\{<m^{\otimes \N},.>\}_{m\in \mathcal{P}(I)}, \mathbb{P}\underline{\psi}^{-1})\}_{t\geq 0}.
\end{equation} 
\end{remark}
\begin{proof}
Boundedness of $\psi_s^{t,\tilde{e}}$ a.s. for any $t,s\geq 0$ yields that for any $t\geq 0$ and for any $s,r\leq t$, there exists a constant $C>1$ such that
\begin{equation}
<\mu_s^{\tilde{e}},\psi_r^{t,\tilde{e}}>\leq \|\hat{\psi}_0\|_{\infty} \leq C\|\hat{\psi}_0\|_{\infty} \ \ a.s.
\end{equation}
\begin{equation}
\begin{array}{l}
\|\tilde{\mathcal{G}}_{\tilde{e}}(s)\tilde{H}(\mu_s^{\tilde{e}},\psi_r^{t,\tilde{e}})\|_\infty\leq C <\mu_s^{\tilde{e}},\psi_r^{t,\tilde{e}}>\leq C\|\hat{\psi}_0\|_{\infty} \ \ a.s. 
\end{array}
\end{equation}
and
\begin{equation}
\|\tilde{\mathcal{G}}^{*t}_{\tilde{e}}(s)\tilde{H}(\mu_s^{\tilde{e}},\psi_r^{t,\tilde{e}})\|_\infty\leq C <\mu_s^{\tilde{e}},\psi_r^{t,\tilde{e}}>\leq C\|\hat{\psi}_0\|_{\infty} \ \ a.s. 
\end{equation}
since for any $f\in \mathcal{C}^*$, $\|f\circ \sigma_{ij}\circ \varrho_j^{ins}\|_{\infty}$, $\|B_i'f\|_{\infty}$, $\|B_i''f\|_{\infty}$, and $\|f\tilde{e}_i(s)-f\tilde{e}_{n+1}(s)\|_{\infty}$ are bonded by $\|f\|_{\infty}$ (cf. Proposition \ref{Proposition-dual-non-decreasing}). Therefore the assumptions of Proposition \ref{Proposition-criteria for duality-time dependent} hold. Then the statement of theorem follows from (\ref{the equality for operators of FVRE and dual-in quenched env}) (for every $t$ and a.e. $s\leq t$), and Proposition \ref{Proposition-criteria for duality-time dependent}.
\end{proof}
\begin{remark}\label{Remark-duality relation FVRE for times s and t}
The same argument as the one in the proof of the last proposition shows that, in fact the following more general duality relation is true. For $r\geq 0$, let $\tilde{\mathcal{G}}^{+r}:\Omega \times \R_+\times \tilde{\mathfrak{F}} \rightarrow \tilde{\mathfrak{F}}$ be an operator process defined by
\begin{equation}
\tilde{\mathcal{G}}^{+r}(\omega,s):=\tilde{\mathcal{G}}(\omega,s+r) \ for \ s\geq 0 \ and \ \omega\in\Omega.
\end{equation}
Then, the simple observation that for $t\geq 0$ and for a.e. $s\leq t$
\begin{equation}
\mathcal{G}^{*t}(s)=\tilde{\mathcal{G}}(t-s)=\tilde{\mathcal{G}}^{+r}(t-r-s) \ \ \ \ \mathbb{P}^{env}-a.s.
\end{equation}
shows that for every $\tilde{e}\in  D_E[0,\infty)$, $m_0\in \mathcal{P}(I)$, $r\geq 0$, and $\hat{\psi}_0\in \mathcal{C}^*$, the time-dependent martingale problem
\begin{equation}
(\tilde{\mathcal{G}}_{\tilde{e}}^{+r},\tilde{\mathfrak{F}},\delta_{m_0})
\end{equation}
and one-parameter family of time-dependent martingale problems
\begin{equation}
\{(\mathcal{G}_{\tilde{e}}^{*t},\{<m^{\otimes \N},.>\}_{m\in \mathcal{P}(I)}, \delta_{\hat{\psi}_0})\}_{t\geq r}
\end{equation}
are dual with respect to $(\tilde{H},0)$, that is for 
every $t\geq r$
\begin{equation}
\mathbb{E}^{m_0}[\tilde H(\mu_{t-r}^{+r,\tilde{e}},\hat{\psi}_0)]=\mathbb{E}^{\hat{\psi}_0}[\tilde H(m_0,\psi_{t-r}^{t,\tilde{e}})],
\end{equation}
where $\mu_s^{+r,\tilde{e}}=\mu_{s+r}^{\tilde{e}}$ for $s\geq 0$.
\end{remark}
The family of annealed stochastic processes $\{\psi^{t,e}\}_{t\in \R_+}$ is called the dual in r.e. $e$. Also for any $\tilde{e}\in D_E[0,\infty)$, the family of time-inhomogeneous Markov processes $\{\psi^{t,\tilde{e}}\}_{t\in \R_+}$ is called the dual in quenched environment (or with quenched fitness process) $\tilde{e}$.
\begin{proposition}\label{Proposition-uniqueness of FVRE martingale problem}
For any measurable map $$\tilde{\mathbf{P}}_0:D_E[0,\infty)\rightarrow \mathcal{P}(\mathcal{P}(E)),$$ the $(\tilde{\mathcal{G}},\tilde{\mathfrak{F}},\tilde{\textbf{P}}_0,\mathbb{P}^{env})$-martingale problem has at most one solution.
\end{proposition}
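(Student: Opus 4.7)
My plan is to apply the quenched duality-implies-uniqueness principle established earlier in the excerpt (the quenched version of Proposition \ref{Proposition-existence of dual gives uniqueness}, stated immediately after it). I would take $S_1=\mathcal{P}(I)$, $S_2=\mathcal{C}^*$, $\mathcal{D}_1=\tilde{\mathfrak{F}}$, $\mathcal{D}_2=\{\langle m^{\otimes\N},\cdot\rangle:m\in\mathcal{P}(I)\}$, $f=\tilde{H}$, and $g\equiv 0$, with primary operator process $\tilde{\mathcal{G}}^e$ and, for each $t\geq 0$, dual operator process $\mathcal{G}^{*t,e}$ corresponding to the jump process $\psi^{t,e}$ constructed in this section.

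The three ingredients this reduction needs are already in hand. First, the duality relation is supplied by Proposition \ref{Proposition-duality relation FVRE}, which holds for \emph{every} quenched environment $\tilde{e}\in D_E[0,\infty)$ (not merely $\mathbb{P}^{env}$-a.e.\ one), so no measurability worry arises when recombining across environments. Second, existence of a solution to each dual martingale problem $(\mathcal{G}_{\tilde{e}}^{*t},\mathcal{D}_2,\delta_{\hat{\psi}_0})$ is automatic, since $\psi^{t,\tilde{e}}$ has been constructed explicitly as a pure-jump Markov process starting from any $\hat{\psi}_0\in\mathcal{C}^*$ and for any $t\geq 0$. Third, the test family $\{\tilde{H}(\cdot,\hat{\psi}):\hat{\psi}\in\mathcal{C}^*\}$ coincides with $\tilde{\mathfrak{F}}$ and is measure-determining on $\mathcal{P}(\mathcal{P}(I))$ by Proposition \ref{Proposition-H is measure-determining}.

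Assembling these ingredients, for each fixed $\tilde{e}\in D_E[0,\infty)$ the time-inhomogeneous martingale problem $(\tilde{\mathcal{G}}_{\tilde{e}},\tilde{\mathfrak{F}},\tilde{\mathbf{P}}_0(\tilde{e}))$ has at most one solution. I note that compactness of $I$ forces $\mathcal{P}(I)$ to be compact, so $\mathcal{P}_c(\mathcal{P}(I))=\mathcal{P}(\mathcal{P}(I))$ and the ``compactly supported initial distribution'' hypothesis in Proposition \ref{Proposition-existence of dual gives uniqueness} is trivial. Piecing this together over a full $\mathbb{P}^{env}$-measure set of environments and invoking the equivalence in the remark following Definition \ref{Quenched martingale problem in r.e.} delivers uniqueness of the quenched problem $(\tilde{\mathcal{G}},\tilde{\mathfrak{F}},\tilde{\mathbf{P}}_0,\mathbb{P}^{env})$.

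The one delicate point is bookkeeping rather than conceptual: the dual $\psi^{t,\tilde{e}}$ takes values in $\mathcal{C}^*$ with its sup-norm topology, and $\tilde{H}$ is not bounded on $\mathcal{C}^*$. To verify the domination conditions of Proposition \ref{Proposition-criteria for duality-time dependent} that underlie Proposition \ref{Proposition-duality relation FVRE}, I would invoke the almost-sure bound $\|\psi_s^{t,\tilde{e}}\|_\infty\leq\|\hat{\psi}_0\|_\infty$ cited as Proposition \ref{Proposition-dual-non-decreasing}. Once this monotonicity is granted, the required integrability and boundedness of $\tilde{\mathcal{G}}_{\tilde{e}}(s)\tilde{H}$ and $\mathcal{G}_{\tilde{e}}^{*t}(r)\tilde{H}$ are immediate from the form of the generators, and the rest of the argument is mechanical.
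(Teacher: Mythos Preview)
Your proposal is correct and follows essentially the same route as the paper: reduce to the time-inhomogeneous problem for each fixed $\tilde{e}$, invoke the duality relation (Proposition~\ref{Proposition-duality relation FVRE}), the measure-determining property of $\{\tilde{H}(\cdot,\hat{\psi})\}$ (Proposition~\ref{Proposition-H is measure-determining}), and the duality-implies-uniqueness principle (Proposition~\ref{Proposition-existence of dual gives uniqueness}), noting that compactness of $\mathcal{P}(I)$ trivializes the compact-support hypothesis. Your final paragraph on the domination conditions is already absorbed into the proof of Proposition~\ref{Proposition-duality relation FVRE}, so it need not be repeated here.
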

\begin{proof}
Stronger than the statement of the theorem, we show that for every $\tilde{e}\in D_E[0,\infty)$, the time-dependent martingale problem $(\tilde{\mathcal{G}}_{\tilde{e}},\tilde{\mathfrak{F}},m_0)$ for any $m_0\in \mathcal{P}(\mathcal{P}(I))$ is well-posed. Since $\mathcal{P}(I)$ is compact, this is equivalent to well-posedness of $(\tilde{\mathcal{G}}_{\tilde{e}},\tilde{\mathfrak{F}},\delta_{\nu})$ for every $\nu\in \mathcal{P}(I)$. The latter is an immediate consequence of the duality relation (Proposition \ref{Proposition-duality relation FVRE}), Proposition \ref{Proposition-H is measure-determining}, and Proposition \ref{Proposition-existence of dual gives uniqueness}.
\end{proof}
In order to prove an ergodic theorem for FVRE we study the long-time behaviour of the dual family.
\begin{proposition}\label{Proposition-dual absorbing time}
Suppose there exists a parent-independent mutation component, that is $\beta' >0$. Then there exists an almost surely finite random time $\tau$ at which, for every $t\geq 0$ and $\tilde{e}\in D_E[0,\infty)$,  $\psi_\tau^{t,\tilde{e}}$ does not depend on variables of $I^\N$, i.e. $\psi_\tau^{t,\tilde{e}}$ is a random constant function (a $\mathcal{C}_0(I^N)$-valued random variable), and $\tau$ is independent of $t$ and $\tilde{e}$.
\end{proposition}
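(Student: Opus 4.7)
The plan is to couple all the quenched duals $\{\psi^{t,\tilde{e}}: t \geq 0,\ \tilde{e} \in D_E[0,\infty)\}$ on a single probability space via a common graphical construction, and then to dominate their degrees uniformly in $(t,\tilde{e})$ by a skeleton-measurable birth-death chain which reaches $0$ in finite time almost surely. Setting $\tau$ equal to the hitting time of $0$ for the dominating chain will produce the required random time.

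Concretely, on a common probability space I would place independent Poisson processes $\Pi^{res}_{i,j}$ of rate $\gamma/2$, $\Pi^{mut'}_i$ of rate $\beta'$, $\Pi^{mut''}_i$ of rate $\beta''$, and $\Pi^{sel}_i$ of rate $\alpha$, for $(i,j)\in\N^2$ and $i\in\N$. For each $(t,\tilde{e})$ I realise $\psi^{t,\tilde{e}}$ by executing the prescribed jump at each atom of these processes whenever the relevant indices do not exceed the current degree $\deg \psi^{t,\tilde{e}}_{s^-}$, where $\deg f$ denotes the least $n\in\N$ with $f \in \mathcal{C}_n(I^\N)$. The rates resulting from this thinning match exactly those given in the definition of $\psi^{t,\tilde{e}}$, so each such process has the correct law, and the only dependence on $(t,\tilde{e})$ left is in the values (not the times) of the selection jumps.

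Next I introduce a single $\{0,1,2,\ldots\}$-valued process $N_s$ started at $\deg\hat{\psi}_0$ and driven by the same Poisson atoms: at an atom of $\Pi^{sel}_i$ with $i \leq N_{s^-}$ set $N_s = N_{s^-}+1$; at an atom of $\Pi^{res}_{i,j}$ with $i,j \leq N_{s^-}$ set $N_s = N_{s^-}-1$; at an atom of $\Pi^{mut'}_i$ with $i = N_{s^-}$ set $N_s = N_{s^-}-1$; leave $N_s$ unchanged otherwise. An induction on successive jump epochs yields the key coupling inequality $\deg \psi^{t,\tilde{e}}_s \leq N_s$ a.s., for every $s, t, \tilde{e}$. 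The inductive step rests on four easily verified facts about jumps applied to $f \in \mathcal{C}_n(I^\N)$: a resampling jump with $i,j\leq n$ brings the degree from $n$ to at most $n-1$; a parent-independent mutation at $i = n$ strictly lowers the degree; a selection jump at $i\leq n$ raises the degree by at most one; and any jump whose triggering index exceeds the current degree of $\psi^{t,\tilde{e}}$ leaves $\psi^{t,\tilde{e}}$ untouched. The main technical step is the case analysis showing that the specific thresholds ($i=N_{s^-}$ for decrementing via parent-independent mutation, and $i,j\leq N_{s^-}$ for resampling, rather than using the actual degree of $\psi^{t,\tilde{e}}$) preserve dominance in every possible scenario, in particular in the delicate case $N_{s^-}>\deg \psi^{t,\tilde{e}}_{s^-}$ where only $N$ moves.

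Finally, $N_s$ is a birth-death chain on $\{0,1,2,\ldots\}$ with $0$ absorbing, birth rate $\lambda_n = \alpha n$ and death rate $\mu_n = (\gamma/2)n^2 + \beta'$ at state $n\geq 1$. Since $\beta' > 0$, state $1$ has positive rate of transition to $0$; and $\mu_n/\lambda_n \sim \gamma n/(2\alpha)\to\infty$, so the classical criterion $\sum_{n\geq 1} \prod_{k=1}^n \mu_k/\lambda_k = +\infty$ for almost-sure absorption at $0$ is immediate (in fact the summand grows super-factorially), and the expected absorption time is finite. Setting $\tau := \inf\{s\geq 0 : N_s = 0\}$ therefore yields an a.s.\ finite random time which is measurable with respect to the common Poisson skeleton, and in particular does not depend on $t$ or $\tilde{e}$, such that $\deg \psi^{t,\tilde{e}}_\tau \leq N_\tau = 0$ for every $t$ and $\tilde{e}$, as required.
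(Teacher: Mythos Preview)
Your approach is essentially the same as the paper's: reduce to a birth--death analysis of the degree and show absorption at $0$ a.s. The paper argues somewhat informally that the degree process $\varphi_s=\deg\psi_s^{t,\tilde e}$ is itself a $(t,\tilde e)$-independent birth--death chain; your explicit graphical coupling together with the dominating chain $N_s$ is a cleaner way to make this rigorous, since in fact the exact degree after a selection jump can depend on the value $\tilde e(t-s)$ (for instance if $\tilde e(t-s)\equiv 1$ the degree does not increase), so only an upper bound on the degree is genuinely free of $(t,\tilde e)$. The domination argument buys you exactly this.

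One small gap to repair: your claim that ``a resampling jump with $i,j\le n$ brings the degree from $n$ to at most $n-1$'' fails in the single case $i=j=n$, where $f\circ\sigma_{nn}\circ\varrho_n^{ins}=f$. As written, at such an atom your chain $N$ decrements while $\deg\psi$ does not, which breaks the domination precisely when $\deg\psi_{s^-}=N_{s^-}$. The fix is immediate: decrement $N$ only on atoms of $\Pi^{res}_{i,j}$ with $i\neq j$ and $i,j\le N_{s^-}$, giving death rate $\mu_n=(\gamma/2)\,n(n-1)+\beta'$; the absorption criterion and the rest of the argument are unaffected.
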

\begin{proof}
First note that if there exists such a random time, then it is independent of the choice of $t\geq 0$ and $\tilde{e}\in D_E[0,\infty)$. This is true since the random time is only a function of Poisson jump processes which by assumption are independent of $e$ and $t$. So it is enough to show the existence of $\tau$ for an arbitrary quenched process $\psi=(\psi_s)_{s\geq 0}:=\psi_s^{t,\tilde{e}}$ for $\tilde{e}\in D_E[0,\infty)$ and $t\geq 0$. Note that constant functions , i.e. the elements of $\mathcal{C}_0(I^\N)$, are absorbing states. For an arbitrary initial state $f\in \mathcal{C}_n(I^{\N})$, we prove that there exists a random almost surely finite time at which the process hits $\mathcal{C}_0(I^{\N})$. The degree of a function in $\mathcal{C}^*$ is the maximum number of variables (possibly $0$) on which the function depends. Consider the natural surjective mapping from $\mathcal{C}^*$ onto  $\N\cup\{0\}$ that corresponds to each function in $\mathcal{C}^*$, its degree in $\N\cup\{0\}$. This mapping induces a continuous time random walk on the state space $\N\cup\{0\}$, more precisely defined by $\varphi_t=n$ if the degree of $\psi_t$ is $n$.
 for $t\geq 0$. Note that $\psi$ hits a constant if and only if $\varphi=(\varphi_s)_{s\geq 0}$ hits $0$. In fact, we can see that $\varphi$ is a birth-death process with a quadratic rate of death and a linear rate of birth. In order to see this, we need to determine the degree of all the states to which $\psi$ can jump from an arbitrary state $f\in \mathcal{C}_n(I^\N)$. It is clear that, at any time $s\leq t$, $f$ can jump only to states
\begin{equation}
\begin{array}{l}
f\circ\sigma_{ij}\circ\varrho_j^{ins} \in \mathcal{C}_{n-1} \ with \ rate \ \frac{\gamma}{2} \ \ for \  i,j=1,..,n\\
B_i'f\in \mathcal{C}_{n-1} \ \ \ with \ rate \ \beta' \ for \ i=1,..,n\\
B_i''f\in \mathcal{C}_{n} \ \ \ with \ rate \ \beta'' \ for \ i=1,..,n\\
\tilde{e}(t-s)f+(1-\tilde{e}(t-s))f \in \mathcal{C}_{n+1} \ \ \ with \ rate \ \alpha \ for \ i=1,..,n\\.
\end{array}
\end{equation}
Again, it is clear that time $s$ and environment $\tilde{e}$ do not affect on the birth and death rates. Let $g$ be a polynomial with degree $n$. There exists an $\tilde{n}\geq n$ such that $g\in \mathcal{C}_{\tilde{n}}(I^\N)$. Therefore, at a time of jump, a birth occurs at state $n$ with probability
\begin{equation}
\frac{n\alpha}{{\tilde{n} \choose 2}\frac{\gamma}{2}+n\beta'+n\alpha},
\end{equation}
and a  death occur with probability
\begin{equation}
\frac{{\tilde{n} \choose 2}\frac{\gamma}{2}+n\beta'}{{\tilde{n} \choose 2}\frac{\gamma}{2}+n\beta'+n\alpha}.
\end{equation}
Thus, for any $n\geq 2$, $\varphi$ starting in $n$ will hit $1$ in an a.s. finite random time. Suppose $\varphi$ does not hit $0$. Then the last line of argument implies that $\varphi$ hits $1$ infinitely many times without jumping afterwards to $0$. But this is not possible due to the existence of the parent-independent mutation component which gives a positive probability, $\frac{\beta'}{\beta'+\alpha}$, of jumps from $1$ to $0$.
\end{proof}
Similarly to \cite{DGP12}, we show that the dual process is non-increasing a.s..
\begin{proposition}\label{Proposition-dual-non-decreasing}
For any $\tilde{e}\in D_E[0,\infty)$ and $t\geq 0$, the dual process $\psi^{t,\tilde{e}}$, starting in $\hat{\psi}_0$, is non-increasing and bounded by $\|\hat{\psi}_0\|_{\infty}$ a.s..
\end{proposition}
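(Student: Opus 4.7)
The plan is to interpret ``non-increasing'' in the sup-norm sense, namely that $s \mapsto \|\psi_s^{t,\tilde{e}}\|_\infty$ is a.s.\ non-increasing, and then to verify that each of the four possible jump maps in the construction of $\psi^{t,\tilde{e}}$ is a sup-norm contraction on $\mathcal{C}^*$. Since the process is pure jump and the sup-norm is constant between jumps, contractivity at every jump yields both the monotonicity in $s$ and the uniform bound $\|\psi_s^{t,\tilde{e}}\|_\infty \leq \|\hat{\psi}_0\|_\infty$ for all $s\geq 0$, a.s.

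The first jump type is resampling, sending $f$ to $f \circ \sigma_{ij}\circ \varrho_j^{ins}$, a composition of $f$ with a measurable map $I^\N \to I^\N$; hence trivially $\|f\circ \sigma_{ij}\circ \varrho_j^{ins}\|_\infty \leq \|f\|_\infty$. The second and third jump types are the mutations $B_i'f$ and $B_i''f$, which by their defining integrals (\ref{mutation integral-2}) and (\ref{mutation integral-3}) are averages of $f\circ \sigma_i^y$ against probability kernels $q'(dy)$, respectively $q''(x_i,dy)$. Since $\|f\circ \sigma_i^y\|_\infty \leq \|f\|_\infty$ for every $y\in I$, Jensen / the triangle inequality for integrals gives $\|B_i'f\|_\infty, \|B_i''f\|_\infty \leq \|f\|_\infty$.

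The fourth jump type, selection, sends $f$ to $\tilde{e}_i(t-s)f + (1-\tilde{e}_i(t-s))(f\circ \varrho_i^{del})$. The key point here is the standing assumption $E \subset \mathcal{C}(I,[0,1])$, which ensures $\tilde{e}_i(t-s)(x) \in [0,1]$ for every $x \in I^\N$. Consequently, at every $x$ the post-jump value is a pointwise convex combination of $f(x)$ and $(f\circ\varrho_i^{del})(x)$, each of which is bounded in modulus by $\|f\|_\infty$; hence
\begin{equation*}
\bigl|\tilde{e}_i(t-s)(x)f(x) + (1-\tilde{e}_i(t-s)(x))(f\circ \varrho_i^{del})(x)\bigr| \;\leq\; \|f\|_\infty
\end{equation*}
for every $x$, and the sup-norm bound follows.

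Since the four bulleted jump types exhaust the transitions listed in the construction of $\psi^{t,\tilde{e}}$, each jump maps $\{g \in \mathcal{C}^* : \|g\|_\infty \leq M\}$ into itself for any $M > 0$. Therefore $\|\psi_s^{t,\tilde{e}}\|_\infty$ is a.s.\ non-increasing in $s$ and is bounded above by $\|\hat{\psi}_0\|_\infty$ for all $s \geq 0$. There is no real obstacle in this argument; the only point that requires care is that the selection contraction depends crucially on $\tilde{e}$ being $[0,1]$-valued, and that the bound produced is uniform in $t$, $s$, and $\tilde{e}$, which is exactly what is needed to apply Proposition \ref{Proposition-criteria for duality-time dependent} in the proof of Proposition \ref{Proposition-duality relation FVRE}.
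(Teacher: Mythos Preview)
Your proof is correct and follows essentially the same approach as the paper: both arguments verify that each of the four jump maps (resampling, parent-independent mutation, parent-dependent mutation, selection) is a sup-norm contraction, and then conclude that the pure-jump process has non-increasing sup-norm. Your treatment of the selection jump via the pointwise convex combination is in fact slightly cleaner than the paper's case-split argument, but the substance is identical.
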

\begin{proof}
Let $n\in \N$ and $f\in \mathcal{C}_n(I^\N)\subset \mathcal{C}^*$ be arbitrary. For any $i,j\leq n$ and $a\in I$, $f\circ \sigma_{ij}\circ \varrho_i^{ins}$, $f\circ \sigma_i^a$, and $f\circ \varrho_i^{del}$ are defined by setting restrictions on the first $n$ variables of $f$, that is they are restrictions of $f$ on a subdomain and therefore
\begin{equation}
\begin{array}{l}
\|f\circ \sigma_i^a\|_{\infty}\leq \|f\|_{\infty}\\
\|f\circ \varrho_i^{del}\|_{\infty}\leq \|f\|_{\infty}\\
\|f\circ\sigma_{ij}\circ\varrho_i^{ins}\|_{\infty}\leq \|f\|_{\infty}\\.
\end{array}
\end{equation}
Also
\begin{equation}
\begin{array}{l}
\|B'f\|_{\infty}=\\
\sup\limits_{x\in I^\N}|\int\limits_I f\circ \sigma_i^u(x)q'(du)|\leq\\
\int\limits_I \|f\circ\sigma_i^u\|_{\infty}q'(du)\leq \|f\|_{\infty}
\end{array}
\end{equation}
Similarly,
\begin{equation}
\begin{array}{l}
\|B''f\|_{\infty}\leq \|f\|_{\infty}.
\end{array}
\end{equation}
For a selection jump at time $t\geq 0$, for $x\in I^\N$, if $f\circ \varrho_i^{del}(x)\leq f(x)$, then
\begin{equation}
\tilde{e}_i(t-s)(x)f(x)+(1-\tilde{e}_i(t-s)(x))f\circ \varrho_i^{del}(x)\leq f(x)\leq \|f\|_{\infty},
\end{equation}
and if, $f(x)\leq f\circ \varrho_i^{del}(x)$, then
\begin{equation}
\begin{array}{l}
\tilde{e}_i(t-s)(x)f(x)+(1-\tilde{e}_i(t-s)(x))f\circ \varrho_i^{del}(x)\\
\leq f\circ \varrho_i^{del}(x)\leq \|f\circ \varrho_i^{del}\|_{\infty} \leq \|f\|_{\infty}.
\end{array}
\end{equation}
Thus,
\begin{equation}
\|\tilde{e}_i(t-s)f+(1-\tilde{e}_i(t-s))f\circ \varrho_i^{del}\|_{\infty}\leq \|f\|_{\infty}.
\end{equation}
Therefore, all jumps lead us to a function with a smaller sup-norm. In other words, $t\mapsto\|\psi_s^{t,\tilde{e}}\|$ is a non-increasing function, a.s.. In particular, for any $s\geq 0$, $\|\psi_s^{t,\tilde{e}}\|_\infty\leq \|\hat{\psi}_0\|_\infty$ a.s..
\end{proof}
To understand the long time behaviour of FVRE, we can study the long time behaviour of the dual process. We need the following lemma for this purpose.

\begin{lemma}\label{Lemma-weakly ergodicity and convergence of joint distributions}
Let $Z^{\nu_0}$ be an $S$-valued Markov process with initial distribution $\nu_0\in \mathcal{P}(S)$ and with homogeneous transition probability function $p(t,x,dy)$ whose semigroup is  denoted by $(T_t^Z:\bar{\mathcal{C}}(S)\rightarrow \bar{\mathcal{C}}(S))_{t\geq 0}$, with
\begin{equation}
T_t^Zf(x)=\int\limits_Sf(y)p(t,x,dy) \ for \ f\in \bar{C}(S).
\end{equation}
Assume that $Z^{\nu_0}$ takes its sample paths in $D_S[0,\infty)$ a.s.. Suppose $Z$ is weakly ergodic, i.e. there exists $\nu\in \mathcal{P}(S)$ such that for any $x\in S$ and $f\in \bar{\mathcal{C}}(S)$ we have $T_t^Zf(x)\rightarrow <\nu,f>$ as $t\rightarrow \infty$. Let $m^{\nu_0}$ be the law of $Z^{\nu_0}$ for $\nu_0 \in \mathcal{P}(S)$, and let $\tilde{Z}=Z^{\nu}$ and denote its law by $\tilde{m}$ (i.e. $\tilde{Z}$ is a stationary Markov process with law $\tilde m=m^{\nu}$). Denote by $m_{t_1,t_2,...,t_k}^{\nu_0}$ and $\tilde m_{t_1,t_2,...,t_k}$ the $k$-dimensional distributions of $m^{\nu_0}$ and $\tilde m$, respectively, for $k\in \N$ and real numbers $0 \leq t_1<t_2<...<t_k$. Then
\begin{enumerate}[(i)]
\item For any $\nu_0\in \mathcal{P}(S)$, any $k\in \N$ and any sequence of real numbers $0\leq t_1<t_2<...<t_k$
\begin{equation}
m_{t_1+s,t_2+s,...,t_k+s}^{\nu_0} \Rightarrow \tilde m_{t_1,t_2,...,t_k}
\end{equation}
as $s\rightarrow\infty$, for any $k\in \N$ and $t_1<...<t_k\in \R_+$.
\item In addition to above assumptions, let $S$ be compact, and $\overline{\mathcal{D}(G^Z)}$, where $G^Z$ is the generator of $Z$, contains an algebra that separates points and vanishes nowhere. Let $\hat{Z}^{\nu_0,t}:=(\hat{Z}_s^{\nu_0,t})_{s\geq 0}$, defined by $\hat{Z}_s^{\nu_0,t}:=Z_{s+t}^{\nu_0}$(the initial distribution of $\hat{Z}^{\nu_0,t}$ is the law of $Z_{t}^{\nu_0}$). Then for any $\nu_0$ the process $\hat{Z}^{\nu_0,t}$ weakly converges to $\tilde{Z}$ in $D_S[0,\infty)$ as $t\rightarrow\infty$.
\end{enumerate}
\end{lemma}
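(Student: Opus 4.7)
My plan is to show convergence of integrals against product test functions $F(x_1,\dots,x_k)=\prod_{i=1}^k f_i(x_i)$ with $f_i\in\bar{\mathcal{C}}(S)$; such products form a convergence-determining class on $\mathcal{P}(S^k)$ (tightness of marginals propagates to tightness of joint laws on a product of Polish spaces, and products are measure-determining). Iterating the Markov property together with the Feller hypothesis $T_r^Z:\bar{\mathcal{C}}(S)\to\bar{\mathcal{C}}(S)$, I would recursively define $h_k:=f_k$ and $h_j:=f_j\cdot T_{t_{j+1}-t_j}^Z h_{j+1}\in\bar{\mathcal{C}}(S)$ for $j=k-1,\dots,1$. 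A telescoping conditioning argument gives
\begin{equation}
\mathbb{E}_{\nu_0}\Big[\prod_{i=1}^k f_i(Z_{t_i+s})\Big]=\int_S T_{t_1+s}^Z h_1(y)\,d\nu_0(y).
\end{equation}
Weak ergodicity provides $T_{t_1+s}^Z h_1(y)\to\langle\nu,h_1\rangle$ pointwise, and bounded convergence then delivers the limit $\langle\nu,h_1\rangle$. A standard consequence of weak ergodicity (apply $T_t^Z$ to $T_s^Z f$ and compare limits) is that $\nu$ is invariant for $(T_t^Z)$, so running the same recursion for the stationary process $\tilde Z$ gives $\mathbb{E}[\prod_i f_i(\tilde Z_{t_i})]=\langle\nu,h_1\rangle$, completing (i).

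\textbf{Part (ii).} The finite-dimensional distributions of $\hat Z^{\nu_0,t}$ at times $s_1<\dots<s_k$ coincide with those of $Z^{\nu_0}$ at $s_1+t,\dots,s_k+t$, so part (i) already supplies convergence of all finite-dimensional distributions of $\hat Z^{\nu_0,t}$ to those of $\tilde Z$ as $t\to\infty$. To upgrade this to weak convergence in $D_S[0,\infty)$ it suffices to establish tightness of $\{\mathbb{P}(\hat Z^{\nu_0,t})^{-1}\}_{t\geq 0}$ in $\mathcal{P}(D_S[0,\infty))$. Compact containment is automatic since $S$ is compact. Each $\hat Z^{\nu_0,t}$ is a Markov solution of the time-homogeneous martingale problem $(G^Z,\mathcal{D}(G^Z),\mathbb{P}(Z_t^{\nu_0}\in\cdot))$; the hypothesis that $\overline{\mathcal{D}(G^Z)}$ contains an algebra separating points and vanishing nowhere, combined with Lemma \ref{constructive uniqueness of martingale problem}, makes this martingale problem well-posed for every initial distribution. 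Since the initial distributions $\mathbb{P}(Z_t^{\nu_0}\in\cdot)$ converge weakly to $\nu$ (the $k=1$, $t_1=0$ case of (i)), standard continuous-dependence theorems for well-posed martingale problems on compact state spaces (e.g.\ Theorem 4.8.10 of \cite{EK86-book}) deliver the required tightness and consequently $\hat Z^{\nu_0,t}\Rightarrow\tilde Z$ in $D_S[0,\infty)$.

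\textbf{Expected main obstacle.} The substantive work lies in Part (ii): converting finite-dimensional convergence into full Skorokhod convergence, i.e.\ controlling the modulus of continuity uniformly in the shift parameter $t$. Although compactness of $S$ removes the compact-containment issue, one still needs an Aldous-type bound on jumps. I prefer to avoid direct Dynkin-martingale estimates by appealing to a continuous-dependence theorem for the martingale problem; the hypothesis on $\overline{\mathcal{D}(G^Z)}$ is precisely what makes that appeal legitimate, as it yields (via Lemma \ref{constructive uniqueness of martingale problem}) the well-posedness required to propagate weak convergence of initial distributions to weak convergence of the path laws.
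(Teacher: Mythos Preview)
Your proposal is correct and follows essentially the same route as the paper. Part (i) is identical: both you and the paper reduce to product test functions, build the nested function $h_1=f_1\cdot T^Z_{t_2-t_1}(f_2\cdot T^Z_{t_3-t_2}(\cdots))$ via the Markov property and the Feller hypothesis, and then apply weak ergodicity to $T^Z_{t_1+s}h_1$. For Part (ii) the paper argues tightness directly from Remark~4.5.2 in \cite{EK86-book}, using that every $\hat Z^{\nu_0,t}$ solves the \emph{same} martingale problem $(G^Z,\mathcal D(G^Z),\cdot)$ with compact state space; you instead route through well-posedness (Lemma~\ref{constructive uniqueness of martingale problem}) and continuous dependence on the initial law (Theorem~4.8.10 in \cite{EK86-book}). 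Both are valid, and the paper's version is marginally shorter since tightness does not actually require well-posedness---the identification of the limit already comes from Part (i).
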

\begin{remark}
Recall that another equivalent definition of weak ergodicity for $Z$ is that there exists a probability measure $\nu \in \mathcal{P}(S)$ such that for every initial distribution $\nu_0$
\begin{equation}
\lim\limits_{t \rightarrow \infty} \mathbb{E}_{m^{\nu_0}}[f(Z_t^{\nu_0})]=<\nu,f> \ \ \ f \in \bar{\mathcal{C}}(S)
\end{equation}
\end{remark}
\begin{proof}
\begin{enumerate}[(i)]
\item We must prove for arbitrary $\nu_0\in \mathcal{P}(S)$, $k\in \N$, $0\leq t_1<t_2<...<t_k$ and $f\in \bar{\mathcal{C}}(S^n)$
\begin{equation}
\int\limits_{S^n} f dm_{t_1+s,t_2+s,...,t_k+s}^{\nu_0} \rightarrow \int\limits_{S^n} fd\tilde m_{t_1,t_2,...,t_k}.
\end{equation}
In order to prove the convergence, it suffices to prove it for a convergence-determining set of functions. In particular, we prove that convergence holds for
\begin{multline}
\{f\in \bar{\mathcal{C}}(S): f(x_1,x_2,...,x_k)=f_1(x_1)f_2(x_2)...f_k(x_k),\\
 \ k\in \N, \  f_i\in \bar{\mathcal{C}}(S), i=1,...,k\}
\end{multline}
Set $s_i:=t_{i+1}-t_i$ for $i=1,...,k-1$. For $f_1,....f_k\in \tilde{\mathcal{C}}(S)$
\begin{equation}
\begin{array}{l}
\int\limits_{S^n} f_1(x_1)...f_k(x_k) dm_{t_1+s,t_2+s,...,t_k+s}^{\nu_0}(dx_1,...,dx_k)=\\
\int\limits_{S} f_1(x_1)\int\limits_{S}f_2(x_2)\int\limits_{S}...\int\limits_{S}f_{k-1}(x_{k-1})\times\\
\int\limits_{S}f_k(x_k)p(s_{k-1},x_{k-1},dx_k)...p(s_1,x_1,dx_2)p(t_1+s,x_0,dx_1)\nu_0(dx_0)=\\
\int\limits_S T_s^Z T_{t_1}^Z(f_1(x)T_{s_1}^Z(...(f_{k-2}(x)T_{s_{k-2}}^Z(f_{k-1}(x)T_{s_{k-1}}^Z(f_k(x))))...))d\nu_0.
\end{array}
\end{equation}
Under the assumptions, the function
\begin{equation}
g(x):=T_{t_1}^Z(f_1(x)T_{s_1}^Z(...(f_{k-2}(x)T_{s_{k-2}}^Z(f_{k-1}(x)T_{s_{k-1}}^Z(f_k(x))))...))
\end{equation}
is continuous, therefore $<\nu_0,T_s^Zg> \rightarrow <\nu,g>$ as $s\rightarrow \infty$, by weak ergodicity of $Z$.
\item It suffice to prove the tightness (cf. Theorem 3.7.8 \cite{EK86-book}). But this follows Remark 4.5.2 in \cite{EK86-book} and the fact that the generators of $\hat{Z}^{\nu_0,t}$, for any $t\geq 0$ is identical to $G^Z$.
\end{enumerate}
\end{proof}
Now, we are ready to state a main tool to study the long-time behaviour of the FVRE. We do this by the study of long time behaviour of the dual processes.
\begin{theorem}\label{Theorem-dual long time behaviour}
Suppose that there exists a parent-independent component in the mutation process, i.e. $\beta'>0$, and let $e$ either be a stationary fitness process (not necessarily Markov) or a weakly ergodic Markov fitness with semigroup $\{T_t^{env}\}$ such that $T_t^{env}:\mathcal{C}(E) \rightarrow \mathcal{C}(E)$ for any $t\geq 0$
. Then, conditioning on $\psi_0^{t,e}=\hat{\psi}_0\in \mathcal{C}^*$, the limit
\begin{equation}
\lim\limits_{t\rightarrow \infty} \mathbb{E}^{\hat{\psi}_0}[f(\psi_t^{t,e})]
\end{equation}
exists for any $f\in \bar{\mathcal{C}}(\mathcal{C}^*)$ and is bounded by $\|\hat{\psi}_0\|_{\infty}$. In particular
\begin{equation}\label{limdual}
\lim\limits_{t\rightarrow \infty} \mathbb{E}^{\hat{\psi}_0}[\psi_t^{t,e}]
\end{equation}
exists (remember that $\psi_t^{t,e}$ hits a constant function, an absorbing state, in finite time a.s. and therefore (\ref{limdual}) is meaningful).
\end{theorem}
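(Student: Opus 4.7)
The plan is to leverage Proposition \ref{Proposition-dual absorbing time} to reduce the limit to the behaviour of the environment along finitely many random times, then invoke stationarity or Lemma \ref{Lemma-weakly ergodicity and convergence of joint distributions}. First I observe that constant functions in $\mathcal{C}_0(I^\N)$ are absorbing for the dual dynamics: each jump map --- $\sigma_{ij}\circ \varrho_j^{ins}$, $B'_i$, $B''_i$, and the selection map $f\mapsto \tilde e_i(t-s)f+(1-\tilde e_i(t-s))f\circ\varrho_i^{del}$ from (\ref{equation-dual selection }) --- sends $\mathcal{C}_0(I^\N)$ to itself. Combined with Proposition \ref{Proposition-dual absorbing time}, this yields $\psi_t^{t,e}=\psi_\tau^{t,e}$ for every $t\geq\tau$. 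Since $\tau$ depends only on the Poisson jump/mutation data, which is independent of $e$ and of $t$, I would condition on that data. On this conditioning there are only finitely many selection jumps, say at times $0<s_1<\dots<s_m\leq\tau$, and one writes $\psi_\tau^{t,e}=\Phi(e_{t-s_1},\dots,e_{t-s_m})$, where $\Phi:E^m\to\mathcal{C}^*$ is a finite composition of continuous affine operations, hence jointly continuous, and uniformly bounded by $\|\hat\psi_0\|_\infty$ in sup-norm by Proposition \ref{Proposition-dual-non-decreasing}.

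Next, for $f\in\bar{\mathcal{C}}(\mathcal{C}^*)$ I would split
\[
\mathbb{E}^{\hat\psi_0}[f(\psi_t^{t,e})]=\mathbb{E}^{\hat\psi_0}[f(\psi_t^{t,e})\mathbf{1}_{\{\tau\leq t\}}]+\mathbb{E}^{\hat\psi_0}[f(\psi_t^{t,e})\mathbf{1}_{\{\tau> t\}}],
\]
and note that the second term is bounded by $\|f\|_\infty\,\mathbb{P}(\tau>t)\to 0$. By independence of the Poisson data from $e$ and Fubini, the first term equals the expectation over $(\tau,\text{jump data})$, restricted to $\{\tau\leq t\}$, of the inner conditional expectation $\mathbb{E}[f(\Phi(e_{t-s_1},\dots,e_{t-s_m}))\mid \tau,\text{jump data}]$. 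In the stationary case, the joint law of $(e_{t-s_1},\dots,e_{t-s_m})$ does not depend on $t$ (for $t\geq\tau$), so the inner expectation is already constant in $t$ and the outer expectation converges by dominated convergence. In the weakly ergodic Markov case, set $t_j=\tau-s_j\geq 0$ and $h=t-\tau$; Lemma \ref{Lemma-weakly ergodicity and convergence of joint distributions}(i) applied as $h\to\infty$ gives
\[
(e_{t_1+h},\dots,e_{t_m+h})\Rightarrow(\tilde e_{t_1},\dots,\tilde e_{t_m}),
\]
where $\tilde e$ is a stationary process with the invariant marginal. Continuous mapping applied to $f\circ\Phi$ gives convergence of the inner conditional expectation, and dominated convergence in the outer integration concludes the proof of existence of $\lim_{t\to\infty}\mathbb{E}^{\hat\psi_0}[f(\psi_t^{t,e})]$. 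The uniform bound $\|\hat\psi_0\|_\infty$ for the limit (\ref{limdual}) is inherited directly from Proposition \ref{Proposition-dual-non-decreasing}.

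The main obstacle will be the weakly ergodic case. One must verify that the Feller-type hypothesis $T_t^{env}:\mathcal{C}(E)\to\mathcal{C}(E)$ in the theorem is precisely what Lemma \ref{Lemma-weakly ergodicity and convergence of joint distributions}(i) needs in order to deliver the finite-dimensional weak convergence above, and then carefully check the joint continuity of $\Phi$ on $E^m$ using the sup-norm topology of $E\subset\mathcal{C}(I,[0,1])$. A secondary subtlety is that the operator-process computation earlier in the section uses the left limit $\tilde e(s^-)$ while the dual jumps in (\ref{equation-dual selection }) are driven by $\tilde e(t-s)$; since $\tilde e$ has only countably many discontinuities, this coincides almost surely at the Poisson jump times and does not obstruct the representation through $\Phi$.
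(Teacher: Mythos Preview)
Your proposal is correct and follows essentially the same route as the paper: condition on the Poisson jump data (which is independent of $e$ and determines $\tau$), express the absorbed value $\psi_\tau^{t,e}$ as a continuous function of the environment evaluated at the finitely many selection times $t-s_1,\dots,t-s_m$, and then use stationarity or Lemma \ref{Lemma-weakly ergodicity and convergence of joint distributions}(i) together with dominated convergence. Your explicit indicator split on $\{\tau\le t\}$ and the remark on $\tilde e(s^-)$ versus $\tilde e(t-s)$ add welcome precision but do not change the underlying argument.
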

\begin{proof}
Let $J_s^{t,e}$ be the set of all times of Poisson jumps for $\psi^{t,e}$ up to time $s$, including resampling, mutation , and selection times of jumps, that $J_s^{t,e}$ has all information of Poisson point processes, but not any information about $e$. Because of stationarity, the process $(J_s^{t,e})_{s\geq 0}$ is independent of $t$ and also $e$ (by assumption). Therefore, it is convenient to drop $(t,e)$ from the superscript. Similarly, we define the following stochastic processes which are independent of $e$ and $t$ (for the same reason) and therefore we drop $(t,e)$ from the superscript again. Let $\kappa_s$ be the stochastic jump process counting the number of selective events of $\psi^{t,e}$ up to time $s\geq 0$, and let 
\begin{equation}
T_s^{sel}:=\{\varsigma_1^{sel}\leq \varsigma_2^{sel}\leq...\leq \varsigma_{\kappa_s}^{sel}\}
\end{equation}
be the times of selective events occurring for $\psi^{t,e}$ up to time $s$. As before, let $\tau$ be the stopping time at which $\psi^{t,e}$ hits a random constant function. Recall that (cf. Proposition \ref{Proposition-dual absorbing time}) $\tau$ is independent of $e$ and $t$, and it is a.s. finite. 
For any $t$ and $e$, $\psi_{\tau}^{t,e}$ is a random constant time whose value is a function, $F$, of $\tau$, $J_{\tau}$, $\kappa_{\tau}$, $T_{\tau}^{sel}$, and $\{e_{t-\varsigma_i^{sel}}\}_{i=1}^{\kappa_\tau}$. Specially, fixing $\tau=s$, $J_{\tau}=\hat{J}_s$, $\kappa_{\tau}=k$, $T_{\tau}^{sel}=\{t_1,...,t_k\}$, the function $F$ is continuous with respect to $e_{t-t_1},...,e_{t-t_k}$, i.e it is continuous with respect to variables of $E^k$. Let $(e_s^*)_{s\geq 0}$ be the stationary process generated by the semigroup $T^{env}$ and invariant initial distribution $\nu$ (In the case that $e$ is stationary, let $e_s^*=e_s$, for $s\geq 0$, and continue the same proof). As $e$ is weakly ergodic, by Lemma \ref{Lemma-weakly ergodicity and convergence of joint distributions}, for any continuous function $g\in \mathcal{C}(E^k)$
\begin{equation}
\begin{array}{l}
\lim\limits_{t\rightarrow \infty} \mathbb{E}[g(e_{t-t_k},e_{t-t_{k-1}},...,e_{t-t_1})]=\\
\mathbb{E}[g(e^*_0,e^*_{t_k-t_{k-1}},...,e^*_{t_k-t_1})].
\end{array}
\end{equation}
Since $e$ is independent of $\tau$, $J_\tau$, $\kappa_\tau$, $T_\tau^{sel}$ (by assumption), the conditional process $e$ given values of $J_\tau$, $\tau$, $\kappa_\tau$, $T_\tau^{sel}$ is still weakly ergodic, and hence
\begin{equation}\label{165}
\begin{array}{l}
\lim\limits_{t\rightarrow\infty}\mathbb{E}^{\hat{\psi}_0}[\psi_\tau^{t,e}|\tau,J_\tau,\kappa,T_\tau^{sel}]=\\
\lim\limits_{t\rightarrow\infty}\mathbb{E}^{\hat{\psi}_0}[F(e_{t-{\varsigma_k}},...,e_{t-{\varsigma_1}},\tau,J_\tau,\kappa,T_\tau^{sel})| \tau,J_\tau,\kappa,T_\tau^{sel}]=\\
\mathbb{E}^{\hat{\psi}_0}[F(e^*_0, e^*_{\varsigma_k- \varsigma_{k-1}}...,e^*_{\varsigma_k- \varsigma_1},\tau,J_\tau,\kappa,T_\tau^{sel})| \tau,J_\tau,\kappa,T_\tau^{sel}].
\end{array}
\end{equation}
Similarly, for any $f\in \bar{\mathcal{C}}(\mathcal{C}^*)$,
\begin{equation}\label{166}
\begin{array}{l}
\lim\limits_{t\rightarrow\infty}\mathbb{E}^{\hat{\psi}_0}[f(\psi_\tau^{t,e})|\tau,J_\tau,\kappa,T_\tau^{sel}]=\\
\lim\limits_{t\rightarrow\infty}\mathbb{E}^{\hat{\psi}_0}[f\circ F(e_{t-{\varsigma_k}},...,e_{t-{\varsigma_1}},\tau,J_\tau,\kappa,T_\tau^{sel})| \tau,J_\tau,\kappa,T_\tau^{sel}]=\\
\mathbb{E}^{\hat{\psi}_0}[f\circ F(e^*_0, e^*_{\varsigma_k- \varsigma_{k-1}}...,e^*_{\varsigma_k- \varsigma_1},\tau,J_\tau,\kappa,T_\tau^{sel})| \tau,J_\tau,\kappa,T_\tau^{sel}]
\end{array}
\end{equation}
Getting another expectation, knowing that $\tau$ is finite and $\|\psi_{\tau}^{t,e}\|\leq \|\hat{\psi}_0\|$ a.s. yields that
\begin{equation}\label{167}
\begin{array}{l}
\lim\limits_{t\rightarrow\infty}\mathbb{E}^{\hat{\psi}_0}[\psi_\tau^{t,e}]=\\
\mathbb{E}^{\hat{\psi}_0}[F(e^*_0, e^*_{\varsigma_k- \varsigma_{k-1}}...,e^*_{\varsigma_k- \varsigma_1},\tau,J_\tau,\kappa,T_\tau^{sel})],
\end{array}
\end{equation}
and hence the limit exists and is bounded by $\|\hat{\psi}_0\|_\infty$. Similarly,
\begin{equation}
\begin{array}{l}
\lim\limits_{t\rightarrow\infty}\mathbb{E}^{\hat{\psi}_0}[f(\psi_\tau^{t,e})]=\\
\mathbb{E}^{\hat{\psi}_0}[f\circ F(e^*_0, e^*_{\varsigma_k- \varsigma_{k-1}}...,e^*_{\varsigma_k- \varsigma_1},\tau,J_\tau,\kappa,T_\tau^{sel})]
\end{array}
\end{equation}
for $f\in \bar{\mathcal{C}}(\mathcal{C}^*)$.
\end{proof}

\section{Convergence of generators}\label{Section-convergence of generators}
This section is devoted to the convergence of generators of MRE to FVRE. Before setting the convergence of generator processes, we need to extend the generators of the measure-valued Moran processes in a convenient sense. The Moran process takes values in $\mathcal{P}^N(I)$. Also all functions in $\tilde{\mathfrak{F}}_N$ have domain $\mathcal{P}^N(I)$. On the other hand, the FV process is a $\mathcal{P}(I)$-valued Markov process, and $\mathcal{P}(I)$ is the domain of polynomials in $\tilde{\mathfrak{F}}$. In order to measure the distance of the elements of $\tilde{\mathfrak{F}}$ and $\tilde{\mathfrak{F}}_N$, for $N\in \N$, we need to extend the functions in the second algebra to take all measures of $\mathcal{P}(I)$.

Let $S'\subset S$, and consider time-dependent linear operators $A:\R_+\times \mathcal{D}(A)\rightarrow \mathcal{B}(S)$ and $B:\R_+\times \mathcal{D}(B)\rightarrow \mathcal{B}(S')$ with $\mathcal{D}(A)\subset \mathcal{B}(S)$ and $\mathcal{D}(B)\subset \mathcal{B}(S')$. For $f\in \mathcal{D}(A)$ and $g\in \mathcal{D}(B)$, let $g^f\in \mathcal{B}(S)$ be
\begin{equation}
g^f(x)=
\left\{
	\begin{array}{ll}
	g(x) & \mbox{if } x\in S'\\
     f(x)  &  \mbox{if } x\in S\setminus S'.
	\end{array}
\right.
\end{equation}
Set
\begin{equation}
D:=\mathcal{D}(B,A)=\{g^f\in \mathcal{B}(S): f\in \mathcal{D}(A),g\in\mathcal{D}(B)\},
\end{equation}
and define, for $t\geq 0$, the time-dependent linear operator $B^A:\R_+\times D\rightarrow \mathcal{B}(S)$ by
\begin{equation}
B^A(t)g^f(x)=
\left\{
	\begin{array}{ll}
	B(t)g(x) & \mbox{if } x\in S'\\
     A(t)f(x)  &  \mbox{if } x\in S\setminus S'.
	\end{array}
\right.
\end{equation}
It is clear that $(f,g) \mapsto g^f$ is bilinear with respect to the function addition. Moreover, if $\mathcal{D}(A)$ and $\mathcal{D}(B)$ are algebras, then so is $D$. The time-dependent linear operator $(B^A,D)$ is called the extension of $(B,\mathcal{D}(B))$ with respect to $(A,\mathcal{D}(A))$.

For a moment, denote by $\|.\|_{\infty}=\|.\|_{\infty}^S$ the sup-norm on $\mathcal{B}(S)$. We extend the notion of supnorm to restrictions of functions to a subdomain of $S$. More precisely, for 
\begin{equation}
h_1,h_2\in \bigcup\limits_{\emptyset\neq S'\subset \tilde{S}\subset S} \mathcal{B}(\tilde{S}),
\end{equation}
define
\begin{equation}
\|h_1-h_2\|_{\infty}^{S'}=\sup\limits_{x\in S'}|h_1(x)-h_2(x)|.
\end{equation}
Then, the following properties are trivial.
\begin{equation}
\begin{array}{l}
\|g^f-f\|_{\infty}=\|g-f\|_{\infty}^{S'}\\
\|B^Ag^f-Af\|_{\infty}=\|Bg-Af\|_{\infty}^{S'}
\end{array}
\end{equation}
Let $\iota_{S',S}$ be the natural embedding from $S'$ into $S$, and let $m_0\in \mathcal{P}(S')$, and, as before, denote by $\iota_{S',S}*m_0$ its push-forward measure under $\iota_{S',S}$. If an $S'$-valued measureable stochastic process $\zeta=(\zeta_t)_{t\geq 0}$ is a solution to the martingale problem $(B,\mathcal{D}(B),m_0)$, then its image under the natural embedding, $(\iota_{S',S}(\zeta_t))_{t\geq 0}$, is a solution to the martingale problem $(B^A,\mathcal{D}(B,A),\iota_{S',S}*m_0)$.

The following proposition is a generalization of Lemma $4.5.1$ \cite{EK86-book}.
\begin{proposition}\label{Proposition-convergence of generators-general}
Let $S$ be a separable metric space, $S_n\subset S$, $\mathcal{D} \subset \bar{\mathcal{C}}(S)$, $\mathcal{D}_n\subset \mathcal{B}(S_n)$, for $n\in \N$. Consider time-dependent linear operators
\begin{equation}
A:\R_+\times \mathcal{D} \rightarrow \bar{\mathcal{C}}(S)
\end{equation} 
and
\begin{equation}
A_n:\R_+ \times \mathcal{D}_n \rightarrow \mathcal{B}(S_n).
\end{equation}
Denote the sup-norm on $S$ by $\|.\|_{\infty}$, and let
\begin{equation}
\|.\|_{\infty,n}:=\|.\|_\infty^{S_n} \ \ for \ n\in\N,
\end{equation}
where the right side is defined as before. Let $m_0^n\in \mathcal{P}(S_n)$, for $n\in \N$, and $m_0 \in \mathcal{P}(S)$. Let $Z_n$ be a solution of the martingale problem $(A_n, \mathcal{D}_n, m_0^n)$ (with sample paths in $D_{S_n}[0,\infty)\subset D_{S}[0,\infty)$) for every $n\in \N$. Assume that, for any $f\in \mathcal{D}$, there exists a sequence $(f_n)_{n\in \N}$, with $f_n \in \mathcal{D}_n$ for every $n\in \N$, such that
\begin{enumerate}[(i)]
\item $\lim\limits_{n\rightarrow \infty}\|f_n-f\|_{\infty,n}=0$
\item $\lim\limits_{n\rightarrow \infty}\|A_n(s)f-A(s)f_n\|_{\infty,n}=0$ for a.e. $s\geq 0$.
\item For any $t\geq 0$
\begin{equation}
\sup\limits_{n\in\N, s\leq t}\|A_n(s)f_n\|_{\infty,n}< \infty.
\end{equation}
\end{enumerate}
If $Z$ is an $S$-valued stochastic process with the initial distribution $m_0$ such that $Z_n \Rightarrow Z$ in $D_S[0,\infty)$, as $n\rightarrow\infty$, then $Z$ is a solution of the martingale problem $(A,\mathcal{D},m_0)$.
\end{proposition}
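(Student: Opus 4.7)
The plan is to verify the defining martingale equation for $Z$ directly by passing to the limit in the martingale equation satisfied by each $Z_n$. First I would fix $f \in \mathcal{D}$ together with a companion sequence $(f_n)_{n\in\N}$ with $f_n \in \mathcal{D}_n$ provided by the hypothesis, and invoke the Skorokhod representation theorem so that $Z_n \to Z$ holds almost surely in $D_S[0,\infty)$. Let $\mathcal{T} \subset [0,\infty)$ denote the co-countable set of times at which $Z$ is almost surely continuous; at any $t \in \mathcal{T}$ one has $Z_n(t) \to Z(t)$ almost surely.

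Next, for $0 \leq s_1 < \cdots < s_k < s < t$ chosen from $\mathcal{T}$ and for arbitrary $h \in \bar{\mathcal{C}}(S^k)$, the martingale property of $Z_n$ reads
\[
\mathbb{E}\Bigl[\Bigl(f_n(Z_n(t)) - f_n(Z_n(s)) - \int_s^t A_n(r)f_n(Z_n(r))\,dr\Bigr)\, h(Z_n(s_1),\ldots,Z_n(s_k))\Bigr] = 0.
\]
The plan is to send $n \to \infty$ term-by-term and obtain the analogous identity with $(f, A, Z)$ in place of $(f_n, A_n, Z_n)$; a standard monotone-class argument then extends this to conditional expectations and yields the full martingale property under the canonical filtration of $Z$. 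For the boundary terms one splits $|f_n(Z_n(\cdot)) - f(Z(\cdot))| \leq \|f_n - f\|_{\infty,n} + |f(Z_n(\cdot)) - f(Z(\cdot))|$, where the first summand vanishes by (i) and the second by continuity of $f$ at evaluation times in $\mathcal{T}$. For the time-integrand, at each $r \in \mathcal{T}$ at which (ii) holds I would decompose
\[
A_n(r)f_n(Z_n(r)) - A(r)f(Z(r)) = \bigl(A_n(r)f_n - A(r)f\bigr)(Z_n(r)) + \bigl(A(r)f(Z_n(r)) - A(r)f(Z(r))\bigr),
\]
the first piece vanishing by (ii) since $Z_n(r) \in S_n$, the second by continuity of $A(r)f \in \bar{\mathcal{C}}(S)$.

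The main obstacle I anticipate is the legitimate interchange of the limit with the double integral (over $[s,t]$ and over $\Omega$). Condition (ii) supplies pointwise convergence only for Lebesgue-a.e.\ $r$, while Skorokhod convergence supplies convergence of $Z_n(r)$ to $Z(r)$ only at continuity points of $Z$; fortunately these two full-measure conditions intersect in a full-measure subset of $[s,t]$. To pull the limit through, I would use hypothesis (iii) together with the uniform bound $\sup_n \|f_n\|_{\infty,n} < \infty$ (from (i) and the boundedness of $f \in \bar{\mathcal{C}}(S)$) and $\|h\|_\infty < \infty$ to produce a single dominating constant, and then apply Fubini followed by dominated convergence — first inside the time integral and then in the outer expectation. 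This delivers the martingale identity for $Z$, and hence identifies $Z$ as a solution of $(A, \mathcal{D}, m_0)$.
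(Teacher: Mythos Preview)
Your proposal is correct and follows essentially the same strategy as the paper: write the martingale identity for $Z_n$ with test functions $h$ at finitely many times, pass to the limit using (i)--(iii) and dominated convergence, and conclude. The only cosmetic differences are that the paper introduces an explicit extension device $f_n^f$, $A_n^A$ to put everything on $S$ and argues via weak convergence directly rather than Skorokhod representation, and it notes explicitly (as you should too) that the identity extends from the co-countable continuity set $\mathcal{T}$ to all times by right continuity of $Z$.
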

\begin{proof}
Let $\tilde{Z}_n(t)=\iota_{S_n,S}(Z_n(t))$, for $t\geq 0$, where $\iota_{S_n,S}$ is the natural embedding from $S_n$ into $S$. As explained above, $\tilde{Z}_n$ is a solution to the martingale problem $(A_n^A,\mathcal{D}(A_n,A),\tilde{m}_0^n)$, for $n\in \N$, where $A_n^A$ is the extension of $A_n$ with respect to $A$ with the domain $\mathcal{D}(A_n,A)$ (defined as before), and $\tilde{m}_0^n=\iota_{S_n,S}*m_0^n$ is the image (push-forward measure) of $m_0^n$ under $\iota_{S_n,S}$. In order for $Z$ to be a solution to $(A,\mathcal{D},m_0)$, it is necessary and sufficient that for any $k\in \N$, $0\leq t_1\leq ...\leq t_k\leq t\leq s$, $h_i\in \bar{\mathcal{C}}(S)$, for $i=1,...,k$, and $f\in \mathcal{D}$
\begin{equation}
\begin{array}{l}
\mathbb{E}[\{f(Z(s))-f(Z(t))-\int\limits_t^s A(r)f(Z(r))dr\}\prod\limits_{i=1}^k h_i(Z(t_i))]=0.
\end{array}
\end{equation}
Under the assumptions, for any $f\in \mathcal{D}$, there exists a sequence of $f_n\in \mathcal{D}_n$ such that
\begin{equation}
\lim\limits_{n\rightarrow \infty}\|f_n^f-f\|_{\infty}=\lim\limits_{n\rightarrow \infty}\|f_n-f\|_{\infty,n}=0
\end{equation}
and
\begin{equation}
\lim\limits_{n\rightarrow \infty}\|A_n^A(s)f_n^f-A(s)f\|_{\infty}=\lim\limits_{n\rightarrow \infty}\|A_n(s)f_n-A(s)f\|_{\infty,n}=0
\end{equation}
for a.e. $s\geq 0$. Let $k\in \N$, $h_i\in \bar{\mathcal{C}}(S)$, for $i=1,...,k$. Let all $0\leq t_1\leq ...\leq t_k\leq t\leq s$ be in the times of continuity for $Z$, i.e. they belong to $\{r\geq 0: Z(r)=Z(r^-) \ a.s.\}$ which contains all positive real numbers except possibly at most countable ones. Since $\tilde{Z}_n\Rightarrow Z$ in $D_S[0,\infty)$, as $n\rightarrow \infty$, and $f$ is bounded continuous, we have
\begin{equation}
\lim\limits_{t\rightarrow \infty}\mathbb{E}[A_n^A(r)f_n^f(\tilde{Z}_n(r))]=\mathbb{E}[A(r)f(Z(r))]
\end{equation}
for a.e. $r\geq 0$ (for all $r\geq 0$ except possibly at most a countable number of points). Thus, by $(iii)$,
\begin{equation}
\lim\limits_{t\rightarrow \infty}\int\limits_t^s\mathbb{E}[A_n^A(r)f_n^f(\tilde{Z}_n(r))]dr=\int\limits_t^s\mathbb{E}[A(r)f(Z(r))]dr.
\end{equation}
Therefore, as $\tilde{Z}_n$ is a solution to $(A_n^A,\mathcal{D}(A_n,A),\tilde{m}_0^n)$ for any $n\in \N$,
\begin{equation}
\begin{array}{l}
0=\lim\limits_{n\rightarrow \infty}\mathbb{E}[\{f_n^f(\tilde{Z}_n(s))-f_n^f(\tilde{Z}_n(t))-\int\limits_t^s A_n^A(r)f_n^f(\tilde{Z}_n(r))dr\}\prod\limits_{i=1}^k h_i(Z(t_i))]=\\
\mathbb{E}[\{f(Z(s))-f(Z(t))-\int\limits_t^s A(r)f(Z(r))dr\}\prod\limits_{i=1}^k h_i(Z(t_i))]=0.
\end{array}
\end{equation}
Since $\tilde{Z}$ is right continuous a.s., by $(iii)$ and continuity of $f$, the last equality holds for any choice of $k\in \N$ and $t_1,...,t_k,t,s\geq 0$.
\end{proof}
\begin{remark}
One can replace $(ii)$ and $(iii)$ in the assumptions of the last proposition by
\begin{enumerate}
\item[(ii)'] There exists a measure-zero subset of $\R_+$, namely $U$, such that for any $t\geq 0$, $\lim\limits_{n\rightarrow \infty}\|A_n(s)f_n-A(s)f\|_{\infty,n}=0$ uniformly on $s\in [0,t]\cap U^c$.
\item[(iii)'] For any $t\geq 0$
\begin{equation}
\sup\limits_{s\leq t}\|A(s)f\|_{\infty}< \infty.
\end{equation}
\end{enumerate}
In fact $(ii)'$ and $(iii)'$ conclude $(ii)$ and $(iii)$.
\end{remark}
To apply the last proposition in our problem, we must verify the validity of the assumptions $(i)$,$(ii)$, and $(iii)$, for the generators of MRE and FVRE. We can see that the generators are uniformly bounded in a  very strong sense.

Recall that for $\tilde{e}\in D_E[0,\infty)$, $\tilde{\mathcal{G}}_{\tilde{e}}=\tilde{\mathcal{G}}^{res}+\tilde{\mathcal{G}}^{mut}+\tilde{\mathcal{G}}_{\tilde{e}}^{sel}$ and $\tilde{\mathcal{G}}_{\tilde{e}}^N=\tilde{\mathcal{G}}^{res,N}+\tilde{\mathcal{G}}^{mut,N}+\tilde{\mathcal{G}}_{\tilde{e}}^{sel,N}$ for $N\in \N$. Also, from now on in the rest of the paper, we denote by $\|.\|$ the supnorm on $\mathcal{B}(\mathcal{P}(I))$, and by $\|.\|_N:=\|.\|^{\mathcal{P}^N(I)}$ the supnorm on restrictions on $\mathcal{B}(\mathcal{P}^N(I))$. Also, similarly to the last section, we denote by $\|.\|_\infty$ the supnorm on $\mathcal{B}(I^\N)$, specially we use this notation for the functions on $\mathcal{C}^*$, i.e. the state space of the dual process, and we denote by $\|.\|_{\infty,N}=\|.\|_\infty^{I^N}$ the supnorm on restrictions on $\mathcal{B}(I^N)$.

\begin{proposition}\label{Proposition-Uniform-boundedness for Generator of FV}
For any $\tilde{\Phi}:=\tilde{\Phi}^f\in \tilde{\mathfrak{F}}$,
\begin{equation}
\sup\limits_{s\geq 0, \tilde{e}\in D_E[0,\infty)}\|\tilde{\mathcal{G}}_{\tilde{e}}(s)\tilde{\Phi}\|< \infty
\end{equation} 
\end{proposition}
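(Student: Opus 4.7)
The plan is to bound $\tilde{\mathcal{G}}_{\tilde{e}}(s)\tilde\Phi^f$ pointwise by a constant depending only on the degree $n$ of $f$, on $\|f\|_\infty$, and on the rates $\gamma,\beta,\alpha$, but independent of $s$ and $\tilde e$. The decomposition $\tilde{\mathcal{G}}_{\tilde{e}}(s) = \tilde{\mathcal{G}}^{res}+\tilde{\mathcal{G}}^{mut}+\tilde{\mathcal{G}}_{\tilde e}^{sel}(s)$ reduces the problem to three analogous estimates.

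Fix $f\in\bar{\mathcal{C}}_n(I^\N)$ representing $\tilde\Phi^f$. The key ingredient is that each of the elementary operations appearing in the integrands is a contraction on $(\mathcal{B}(I^\N),\|\cdot\|_\infty)$. More precisely: (i) each $\sigma_{ij}:\bar I_{i\vee j}\to \bar I_{i\vee j}$ is just a substitution of coordinates, so $\|f\circ\sigma_{ij}\|_\infty = \|f\|_\infty$, giving $\|f\circ\sigma_{ij}-f\|_\infty\le 2\|f\|_\infty$; (ii) since $q(x,\cdot)$, $q'(\cdot)$, $q''(x,\cdot)$ are probability kernels, Jensen's inequality applied to (\ref{mutation integral-1})-(\ref{mutation integral-3}) yields $\|B_i f\|_\infty,\|B_i' f\|_\infty,\|B_i'' f\|_\infty\le\|f\|_\infty$, and therefore $\|B_i f - f\|_\infty\le 2\|f\|_\infty$; (iii) since $E\subset\mathcal{C}(I,[0,1])$, for every $\tilde e\in D_E[0,\infty)$ and every $t\ge 0$, $\|\tilde e_i(t)\|_\infty\le 1$, so $\|\tilde e_i(t)f-\tilde e_{n+1}(t)f\|_\infty\le 2\|f\|_\infty$. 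Crucially the bound in (iii) is uniform over $s$ and $\tilde e$.

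Now use that $m^{\otimes\N}$ is a probability measure, which gives $|\langle m^{\otimes\N},g\rangle|\le\|g\|_\infty$ for any bounded measurable $g$. Summing the three explicit expressions for $\tilde{\mathcal{G}}^{res}\tilde\Phi^f(m)$, $\tilde{\mathcal{G}}^{mut}\tilde\Phi^f(m)$, and $\tilde{\mathcal{G}}_{\tilde e}^{sel}(s)\tilde\Phi^f(m)$, one obtains
\begin{equation*}
|\tilde{\mathcal{G}}_{\tilde e}(s)\tilde\Phi^f(m)| \le \bigl(\gamma n^2 + 2\beta n + 2\alpha n\bigr)\|f\|_\infty
\end{equation*}
for every $m\in\mathcal{P}(I)$, every $s\ge 0$, and every $\tilde e\in D_E[0,\infty)$. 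Taking the supremum over $m$ yields the claimed uniform bound.

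There is no real obstacle here: the argument is a direct contractivity-plus-probability-measure estimate. The only point deserving attention is the selection term, where one must observe that the uniform bound $\|\tilde e_i(t)\|_\infty\le 1$ holds simultaneously for all $t\ge 0$ and all trajectories $\tilde e$ precisely because $E$ was chosen as a subset of $\mathcal{C}(I,[0,1])$; this is what delivers uniformity in both $s$ and $\tilde e$ rather than merely pointwise finiteness.
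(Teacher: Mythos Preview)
Your proof is correct and follows essentially the same route as the paper's own argument: bound each of the three pieces of the generator separately using that $\sigma_{ij}$, $B_i$, $B_i'$, $B_i''$ are sup-norm contractions and that $\tilde e$ takes values in $[0,1]$, then sum. One harmless slip: $\|f\circ\sigma_{ij}\|_\infty\le\|f\|_\infty$ rather than equality (the map $\sigma_{ij}$ is not surjective), but only the inequality is used.
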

\begin{proof}
Assume that $f\in \mathcal{C}_n(I^\N)$. As we saw in the proof of Proposition \ref{Proposition-dual-non-decreasing}, for any $i,j\leq n$ and $y\in I$, $f\circ \sigma_{ij}$ and $f\circ \sigma_i^y$ are defined by setting restrictions on the $n$ first variables of $f$, that is they are restrictions of $f$ on a subdomain, and therefore,
\begin{equation}
\begin{array}{l}
\|f\circ \sigma_i^y\|_{\infty}\leq \|f\|_{\infty}\\
\|f\circ\sigma_{ij}\|_{\infty}\leq \|f\|_{\infty}\\.
\end{array}
\end{equation}
Thus, for any $m\in \mathcal{P}(I)$, 
\begin{equation}
\begin{array}{l}
|\tilde{\mathcal{G}}^{res} \tilde{\Phi}^f(m)|=\\
\frac{\gamma}{2} |\sum\limits_{i,j=1}^n <m^{\otimes \N},f \circ \sigma_{ij}-f>|\leq\\
\frac{\gamma}{2}{n \choose 2}(\|f\circ \sigma_{ij}\|_\infty+\|f\|_\infty)\leq\\
\gamma{n \choose 2}\|f\|_\infty.
\end{array}
\end{equation}
Also,
\begin{equation}
\begin{array}{l}
|\tilde{\mathcal{G}}^{mut} \tilde\Phi^f(m)|= \\ =|\beta' \sum\limits_{i=1}^n <m^{\otimes \N},B_i^{'} f-f>+\beta'' \sum\limits_{i=1}^n <m^{\otimes \N},B_i^{''} f-f>|\leq\\
\beta' (\sum\limits_{i=1}^n (\|B_i^{'} f\|_\infty +\|f\|_\infty)+\beta'' (\sum\limits_{i=1}^n (\|B_i^{''} f\|_\infty +\|f\|_\infty)\leq\\
4\beta\|f\|_\infty,
\end{array}
\end{equation}
and for any $\tilde{e}\in D_E[0,\infty)$ and $s\geq 0$
\begin{equation}
\tilde{\mathcal{G}}_{\tilde{e}}^{sel}(s) \tilde\Phi^f(m)=\alpha \sum\limits_{i=1}^n <m^{\otimes \N},\tilde{e}_i(s) f - \tilde{e}_{n+1}(s)f>\leq 2n\alpha \|f\|_\infty.
\end{equation}
Therefore, there exists a constant $C>0$ such that for any $s\geq 0$ and $\tilde{e}\in D_E[0,\infty)$
\begin{equation}
|\tilde{\mathcal{G}}(s) \tilde\Phi^f(m)|<C\|f\|_\infty <\infty.
\end{equation}
\end{proof}

\begin{proposition}\label{Proposition-convergence of generators Moran to FV-uniform boundedness}
Let $\tilde{e}_N,\tilde{e}\in D_E[0,\infty)$, for $N\in \N$, such that $\tilde{e}_N\rightarrow \tilde{e}$ in $D_E[0,\infty)$, as $n\rightarrow \infty$. For any $\tilde{\Phi}:=\tilde{\Phi}^f\in \tilde{\mathfrak{F}}$, there exists a sequence of functions $\tilde{\Phi}_N:=\tilde{\Phi}_N^{f_N}\in \tilde{\mathfrak{F}}_N$, for $N\in \N$, such that 
\begin{enumerate}[(i)]
\item $\lim\limits_{N \rightarrow\infty}\|\tilde{\Phi}_N-\tilde{\Phi}\|_N=0$.
\item For a.e. $t\geq 0$
\begin{equation}
\lim\limits_{N\rightarrow \infty} \|\tilde{\mathcal{G}}_{\tilde{e}_N}^N(t)\tilde\Phi_N-\tilde{\mathcal{G}}_{\tilde{e}}(t)\tilde\Phi\|_N=0.
\end{equation}
\item 
\begin{equation}
\sup\limits_{N\in \N, s\geq 0}\|\tilde{\mathcal{G}}_{\tilde{e}_N}^N(s)\tilde{\Phi}_N\|_N< \infty     
\end{equation}
\end{enumerate}
\end{proposition}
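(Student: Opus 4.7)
The plan is to take $f_N := f$ for all $N \geq n$ (where $n$ is the degree of $\tilde\Phi^f$), viewing $f \in \mathcal{C}_n(I^{\mathbb N})$ as an element of $\mathcal{C}(I^N)$ depending only on the first $n$ coordinates (and to set $f_N$ arbitrarily in $\mathcal{C}(I^N)$ for the finitely many $N < n$). The entire argument then rests on controlling the difference between sampling with and without replacement from an empirical measure, together with the convergence of the fitness process at continuity points.

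For (i), observe that since $f$ depends only on the first $n$ variables, $\tilde\Phi_N^{f_N}(m) = \langle m^{(n)}, f\rangle$ for every $m \in \mathcal{P}^N(I)$ (where $m^{(n)}$ denotes the marginal of $m^{(N)}$ on the first $n$ coordinates, i.e., $n$-sampling without replacement). A direct total variation estimate gives $\|m^{(n)} - m^{\otimes n}\|_{TV} \leq C n^2/N$ uniformly in $m$, so $\|\tilde\Phi_N^{f_N} - \tilde\Phi^f\|_N = O(n^2 \|f\|_\infty / N) \to 0$.

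For (ii), I would analyze the three parts of the generator separately. For each summand I use a crucial exchangeability trick: for fixed $(i,j)$ with $j \leq n < i$, exchangeability of $m^{(N)}$ implies $\langle m^{(N)}, f \circ \sigma_{ij}\rangle = \langle m^{(N)}, f\circ \sigma_{(n+1) j}\rangle$, and then integrating out the marginal over the "new" coordinate reduces back to $\langle m^{(N)}, f\rangle$ in the resampling case. This makes all terms with $\{i,j\} \not\subset \{1,\ldots,n\}$ cancel in resampling, and similarly the mutation terms with $i > n$ vanish since $B_i f = f$. What remains is $\sum_{i,j=1}^n$ (respectively $\sum_{i=1}^n$), which by the same $O(n^2/N)$ replacement of $m^{(N)}$ by $m^{\otimes \mathbb N}$ matches the FV resampling and mutation operators, with a uniform-in-$m$ error. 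The selection part is the most delicate: writing
\[
\tilde{\mathcal{G}}_{\tilde e_N}^{sel,N}(t) \tilde\Phi_N^{f_N}(m) = \tfrac{\alpha}{N}\sum_{i=1}^N\sum_{j=1}^n \langle m^{(N)}, \tilde e_{N,i}(t)(f\circ \sigma_{ij} - f)\rangle,
\]
the terms with $i \leq n$ contribute at most $O(n^2\|f\|_\infty/N)$. For $i > n$, by exchangeability and integration over the fresh coordinate $x_{n+1}$, each term reduces up to $O(n^2/N)$ to $\langle m^{\otimes \mathbb N}, \tilde e_{N,j}(t) f\rangle - \langle m^{\otimes \mathbb N}, \tilde e_{N,n+1}(t) f\rangle$; summing $(N-n)$ such identical contributions and multiplying by $\alpha/N$ gives $\alpha \frac{N-n}{N}\sum_{j=1}^n [\langle m^{\otimes \mathbb N}, \tilde e_{N,j}(t) f\rangle - \langle m^{\otimes \mathbb N}, \tilde e_{N,n+1}(t) f\rangle]$. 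Convergence in Skorokhod topology of $\tilde e_N$ to $\tilde e$ yields $\|\tilde e_N(t) - \tilde e(t)\|_\infty^I \to 0$ at every continuity point $t$ of $\tilde e$ (that is, off a countable, hence null, set of times), and then this expression converges uniformly in $m$ to $\tilde{\mathcal G}^{sel}_{\tilde e}(t) \tilde\Phi^f(m)$.

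For (iii), the three pieces are bounded exactly as in Proposition \ref{Proposition-Uniform-boundedness for Generator of FV}: using $\|f\circ \sigma_{ij}\|_\infty, \|B_i'f\|_\infty, \|B_i''f\|_\infty, \|\tilde e_{N,i}(s) f\|_\infty \leq \|f\|_\infty$, the resampling and mutation sums of nonzero terms are $O(n^2)$ and $O(n)$ respectively (independent of $N$ and $s$), while the selection sum has $O(nN)$ nonzero terms scaled by $\alpha/N$, giving a bound $O(n \alpha \|f\|_\infty)$ uniformly in $s \geq 0, N \in \mathbb N$ and $\tilde e_N \in D_E[0,\infty)$. The main obstacle is the careful exchangeability bookkeeping in (ii) for selection, where the "extra" $(N-n)\approx N$ cross terms must produce exactly the compensating $\tilde e_{n+1}(t)f$ term of the FV generator rather than blowing up.
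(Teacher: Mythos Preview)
Your proposal is correct and follows essentially the same approach as the paper. Your choice $f_N=f$ (for $N\geq n$) is exactly the paper's $f_N=f\circ\eta^N$, since $\eta^N$ is the identity on the first $N$ coordinates and $f$ depends only on the first $n$; your exchangeability reduction of the $i>n$ terms in resampling and selection, the $O(n^2/N)$ with/without-replacement estimate, and the use of $\tilde e_N(t)\to\tilde e(t)$ off the countable discontinuity set of $\tilde e$ all match the paper's argument. One small point: where you say ``integrating out the marginal over the new coordinate,'' the clean justification under sampling without replacement is pure transposition-invariance of $m^{(N)}$ (i.e.\ $f\circ\sigma_{ij}=f\circ tr_{ij}$ for $i>n$, $j\le n$, hence $\langle m^{(N)},f\circ\sigma_{ij}\rangle=\langle m^{(N)},f\rangle$), which is how the paper handles it; your conclusion is right, but the ``integrating out'' phrasing could be misread as assuming product structure.
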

\begin{proof}
To simplify the notation, when it comes to applying it in the proof, we denote
\begin{equation}
\tilde{e}_N^i(s)(x):=\tilde{e}_N(s)(x_i),
\end{equation}
for $s\geq 0$ and $x=(x_1,x_2,...)$ (or $x=(x_1,...,x_N)$). We assume $f\in \mathcal{C}_n(I^\N)$, for fixed $n\in \N$. Consider an arbitrary sequence of injective maps $\eta^N:I^N\rightarrow I^\N$, for $N\in \N$, such that each $\eta^N$ is identical on the first $N$ coordinates (e.g. $\eta^N:(x_1,x_2,...,x_N)\mapsto (y_1,y_2,...)$ where $x_i=y_i$ for $i\leq N$ and, for $i>N$, $y_i=c$ for a fixed $c\in I$). Note that, as we deal with the limit and the supremum, for $N\leq n$, neither the value of $\tilde{\Phi}_N$, nor the value of $\tilde{\mathcal{G}}_{\tilde{e}_N}^N \tilde{\Phi}_N$ are important. So we assume these functions are $0$ functions, for $N\leq n$. For $m\in \mathcal{P}^N(I)$, recall the definition of $m^{(N)}$ from (\ref{Convolution power-without replacement}) and set
\begin{equation}
\tilde{m}^{(N)}:=\eta^N*m^{(N)} \ \ N\in \N,
\end{equation}
that is the push-forward measure of $m^{(N)}$ under $\eta^N$. It is clear that for any function $g\in C(I^\N)$
\begin{equation}
\int\limits_{I^\N}gd\tilde{m}^{(N)}=\int\limits_{I^N}g\circ \eta^Ndm^{(N)}.
\end{equation}
As $f$ depends only on the first $n$ variables, we can define $\bar{f}\in \mathcal{C}(I^n)$ by
\begin{equation}
\bar{f}(x_1,x_2,...,x_n)=f(x_1,x_2,...,x_n,x_{n+1},...),
\end{equation}
for an arbitrary choice of $x_i$ for $i>n$. Let $f_N:=f\circ \eta^N$.
\begin{equation}
\begin{array}{l}
\|\Phi_N^{f_N}-\Phi^f\|_N=\\
\sup\limits_{m\in \mathcal{P}^N(I)} |<m^{(N)},f\circ \eta^N>-<m^{\otimes \N},f>|=\\
\sup\limits_{m\in \mathcal{P}^N(I)}|<\tilde{m}^{(N)},f>-<m^{\otimes \N},f>|=\\
\sup\limits_{m\in \mathcal{P}^N(I)}|\int\limits_{I^n}\bar{f}(x)m(dx_1)\prod\limits_{i=2}^n\frac{Nm(dx_i)-\sum\limits_{j=1}^{i-1}\delta_{x_j}}{N-i+1}-\int\limits_{I^n}\bar{f}(x)\prod\limits_{i=1}^nm(dx_i)|\leq\\
\|f\|_\infty \sup\limits_{m\in \mathcal{P}^N(I)} |\sum\limits_{(x_1,...,x_n)\in supp(m)^n}(m(x_1)\prod\limits_{i=2}^n\frac{Nm(x_i)-\sum\limits_{j=1}^{i-1}\delta_{x_j}}{N-i+1}-\prod\limits_{i=1}^nm(x_i))|\leq\\
\|f\|_\infty(\frac{C}{N}+o(\frac{1}{N}))
\end{array}
\end{equation}
for a constant $C$. This yields $(i)$.\\

To prove $(ii)$, first we observe that for $i,j\leq N$
\begin{equation}
f_N\circ\sigma_{ij}=f\circ\eta^N\circ\sigma_{ij}=f\circ\sigma_{ij}\circ\eta^N.
\end{equation}
Let $tr_{ij}$ be the transposition operator on $i$ and $j$. Since $f$ depends only on the first $n$ variables, for $N\geq n$

\begin{equation}\label{last term of resampling vanishes}
\begin{array}{l}
\tilde{\mathcal{G}}^{res,N}\tilde{\Phi}_N^{f_N}(m)=\\
\frac{\gamma}{2}\sum\limits_{i,j=1}^N<m^{(N)},f\circ\sigma_{ij}\circ\eta^N-f\circ\eta^N>=\\
\frac{\gamma}{2}\sum\limits_{i,j=1}^N<\tilde{m}^{(N)},f\circ\sigma_{ij}-f>=\\
\frac{\gamma}{2}\sum\limits_{i,j=1}^n<\tilde{m}^{(N)},f\circ\sigma_{ij}-f>+\\
\frac{\gamma}{2}\sum\limits_{i=n+1}^N\sum\limits_{j=1}^n<\tilde{m}^{(N)},f\circ\sigma_{ij}-f>=\\
\frac{\gamma}{2}\sum\limits_{i,j=1}^n<\tilde{m}^{(N)},f\circ\sigma_{ij}-f>+\\
\frac{\gamma}{2}\sum\limits_{i=n+1}^N\sum\limits_{j=1}^n<\tilde{m}^{(N)},f\circ tr_{ij}-f>.\\
\end{array}
\end{equation}
But the last term vanishes because of exchangeability of $m^{(N)}$. Therefore, similarly to the proof of $(i)$,
\begin{equation}
\begin{array}{l}
\|\tilde{\mathcal{G}}^{res,N}\tilde{\Phi}_N^{f_N}-\tilde{\mathcal{G}}^{res}\tilde{\Phi}^f\|_N\leq\\
\frac{\gamma}{2}\sup\limits_{m\in \mathcal{P}^N(I)}\sum\limits_{i,j=1}^n|<\tilde{m}^{(N)},f\circ\sigma_{ij}-f>-<m^{\otimes \N},f\circ\sigma_{ij}-f>|<\\
\frac{C_1}{N}+o(\frac{1}{N})
\end{array}
\end{equation}
for a constant $C_1>0$. Similarly, for mutation
\begin{equation}
\begin{array}{l}
\|\tilde{\mathcal{G}}^{mut,N}\tilde{\Phi}_N^{f_N}-\tilde{\mathcal{G}}^{mut}\tilde{\Phi}^f\|_N\leq\\
\sup\limits_{m\in \mathcal{P}^N(I)}\sum\limits_{i=1}^n|<\tilde{m}^{(N)},\beta'(B_i'f-f)+\beta''(B_i''f-f)>\\
-<m^{\otimes \N},\beta'(B_i'f-f)+\beta''(B_i''f-f)>|<\\
\frac{C_2}{N}+o(\frac{1}{N}).
\end{array}
\end{equation}
To verify this for the selection, first note that as $\tilde{e}_N\rightarrow\tilde{e}$ in $D_E[0,\infty)$, as $N\rightarrow \infty$, for every positive real number $t\geq 0$, except possibly a countable number of them, we have $\tilde{e}_N(t)\rightarrow\tilde{e}(t)$. The selection operator is very similar to the resampling one, except, here, the constant rate $\frac{\gamma}{2}$ is replaced by a time-dependent c\`adl\`ag fitness, and hence, the terms corresponding to $i=n+1,...N, j=1,...,n$ do not necessarily vanish.

More explicitly, for a continuity time $s\geq 0$ of  $\tilde{e}$
\begin{equation}
\begin{array}{l}
\frac{\alpha}{N}\sum\limits_{i=n+1}^N\sum\limits_{j=1}^n<\tilde{m}^{(N)},\tilde{e}_N^i(s)f\circ\sigma_{ij}-\tilde{e}_N^i(s)f>=\\
\frac{\alpha}{N}\sum\limits_{i=n+1}^N\sum\limits_{j=1}^n<\tilde{m}^{(N)},(\tilde{e}_N^j(s)f)\circ\sigma_{ij}-\tilde{e}_N^i(s)f>=\\
\frac{\alpha}{N}\sum\limits_{i=n+1}^N\sum\limits_{j=1}^n<\tilde{m}^{(N)},(\tilde{e}_N^j(s)f)\circ tr_{ij}-\tilde{e}_N^i(s)f>=\\
\frac{\alpha}{N}\sum\limits_{i=n+1}^N\sum\limits_{j=1}^n<\tilde{m}^{(N)},\tilde{e}_N^j(s)f-\tilde{e}_N^i(s)f>=\\
\frac{\alpha(N-n)}{N}\sum\limits_{j=1}^n<\tilde{m}^{(N)},\tilde{e}_N^j(s)f-\tilde{e}_N^{n+1}(s)f>.\\
\end{array}
\end{equation}
Therefore, for constants $C_3$ and $C_4$,
\begin{equation}
\begin{array}{l}
\|\tilde{\mathcal{G}}_{\tilde{e}_N}^{sel,N}(s)\tilde{\Phi}_N^{f_N}-\tilde{\mathcal{G}}_{\tilde{e}}^{sel}(s)\tilde{\Phi}^{f}\|_N=\\
\sup\limits_{m\in \mathcal{P}^N(I)}|\frac{\alpha}{N}\sum\limits_{i,j=1}^N<\tilde{m}^{(N)},\tilde{e}_N^i(s)(f\circ\sigma_{ij}-f)>\\
-\alpha\sum\limits_{i=1}^n<m^{\otimes N},\tilde{e}_i(s)f-\tilde{e}_{n+1}(s)f>|\leq\\
\sup\limits_{m\in \mathcal{P}^N(I)}|\frac{\alpha}{N}\sum\limits_{i,j=1}^n<\tilde{m}^{(N)},\tilde{e}_N^i(s)(f\circ\sigma_{ij}-f)>|+\\
\sup\limits_{m\in \mathcal{P}^N(I)}|\frac{\alpha}{N}\sum\limits_{i=n+1}^N\sum\limits_{j=1}^n<\tilde{m}^{(N)},\tilde{e}_N^i(s)f\circ\sigma_{ij}-\tilde{e}_N^i(s)f>\\
-\alpha\sum\limits_{i=1}^n<m^{\otimes N},\tilde{e}_i(s)f-\tilde{e}_{n+1}(s)f>|\leq\\
\frac{C_3}{N}+o(\frac{1}{N})+\\
\sup\limits_{m\in \mathcal{P}^N(I)}|\alpha\frac{N-n}{N}\sum\limits_{i=1}^n<\tilde{m}^{(N)},\tilde{e}_N^i(s)f-\tilde{e}_N^{n+1}(s)f>\\
-\alpha\sum\limits_{i=1}^n<m^{\otimes N},\tilde{e}_i(s)f-\tilde{e}_{n+1}(s)f>|,\\
\end{array}
\end{equation}
where the right hand side is bounded by
\begin{equation}
\begin{array}{l}
\frac{C_3}{N}+o(\frac{1}{N})+\\
\alpha\sum\limits_{i=1}^n\|f\|_\infty\{\|\tilde{e}_N^i(s)- \tilde{e}_i(s)\|_\infty+\|\tilde{e}_N^{n+1}(s)-\tilde{e}_{n+1}(s)\|_\infty\}\\
\end{array}
\end{equation}
Hence, there exists $C_5$ such that
\begin{equation}
\begin{array}{l}
\|\tilde{\mathcal{G}}_{\tilde{e}_N}^{sel,N}(s)\tilde{\Phi}_N^{f_N}-\tilde{\mathcal{G}}_{\tilde{e}}^{sel}(s)\tilde{\Phi}^{f}\|_N<\frac{C_5}{N}+o(\frac{1}{N})
\end{array}
\end{equation}
This finishes the proof of $(ii)$.

For part $(iii)$, similarly to the proof of Proposition \ref{Proposition-Uniform-boundedness for Generator of FV}, there exists a constant $C>0$ such that for any $m\in \mathcal{P}^N(I)$, $s\geq 0$
\begin{equation}
|\tilde{\mathcal{G}}_{\tilde{e}_N}^N(s)\Phi_N^{f_N}(m)|\leq C\|f_N\|_{\infty,N}=C\|f\circ \eta^N\|_{\infty,N}=C\|f\|_\infty
\end{equation}
\end{proof}
\begin{proposition}\label{Proposition-Convergenec of generators FV to FV different environments}
Let $\tilde{e}_N, \tilde{e} \in D_E[0,\infty)$, and suppose that $\tilde{e}_N \rightarrow \tilde{e}$ in $D_E[0,\infty)$. For any $\tilde{\Phi}=\tilde{\Phi}^f \in \tilde{\mathfrak{F}}$
\begin{equation}
\lim\limits_{N\rightarrow \infty} \|\tilde{\mathcal{G}}_{\tilde{e}_N}(s)\tilde\Phi-\tilde{\mathcal{G}}_{\tilde{e}}(s)\tilde\Phi\|=0
\end{equation}
for every $s\geq 0$ except possibly a countable number of real numbers. Moreover,
\begin{equation}
\sup\limits_{N\in\N, s\geq 0}\|\tilde{\mathcal{G}}_{\tilde{e}_N}(s)\tilde{\Phi}^f\|< \infty
\end{equation}
\end{proposition}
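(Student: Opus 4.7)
The plan is to reduce the statement to a single elementary estimate on the selection generator, since the resampling and mutation operators $\tilde{\mathcal{G}}^{res}$ and $\tilde{\mathcal{G}}^{mut}$ do not depend on the fitness process at all and therefore cancel in the difference $\tilde{\mathcal{G}}_{\tilde{e}_N}(s)\tilde{\Phi}-\tilde{\mathcal{G}}_{\tilde{e}}(s)\tilde{\Phi}$. Writing $f\in\mathcal{C}_n(I^{\N})$ and using the explicit expression
\[
\tilde{\mathcal{G}}_{\tilde{e}}^{sel}(s)\tilde{\Phi}^{f}(m)=\alpha\sum_{i=1}^n\langle m^{\otimes\N},\tilde{e}_i(s)f-\tilde{e}_{n+1}(s)f\rangle,
\]
the difference collapses term by term to integrals against $\tilde{e}_N^i(s)-\tilde{e}^i(s)$ and $\tilde{e}_N^{n+1}(s)-\tilde{e}^{n+1}(s)$. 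Since $\|\tilde{e}_N^j(s)-\tilde{e}^j(s)\|_\infty\leq\|\tilde{e}_N(s)-\tilde{e}(s)\|_\infty^E$ for each $j$, one obtains the pointwise-in-$s$ bound
\[
\|\tilde{\mathcal{G}}_{\tilde{e}_N}(s)\tilde{\Phi}^{f}-\tilde{\mathcal{G}}_{\tilde{e}}(s)\tilde{\Phi}^{f}\|\leq 2n\alpha\|f\|_\infty\,\|\tilde{e}_N(s)-\tilde{e}(s)\|_\infty^E,
\]
so the entire content of the first claim is pushed onto the behaviour of $\|\tilde{e}_N(s)-\tilde{e}(s)\|_\infty^E$ as $N\to\infty$.

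For this, the key input is the standard fact about Skorokhod topology on $D_E[0,\infty)$: if $\tilde{e}_N\to\tilde{e}$ in $D_E[0,\infty)$ and $\tilde{e}$ is c\`adl\`ag, then $\tilde{e}_N(s)\to\tilde{e}(s)$ in $E$ (i.e.\ in sup-norm, since $E\subset\mathcal{C}(I,[0,1])$ is equipped with that topology) at every continuity point $s$ of $\tilde{e}$, and the set of discontinuities of a c\`adl\`ag function into a metric space is at most countable. Plugging this into the bound displayed above yields $\lim_N\|\tilde{\mathcal{G}}_{\tilde{e}_N}(s)\tilde{\Phi}-\tilde{\mathcal{G}}_{\tilde{e}}(s)\tilde{\Phi}\|=0$ for every $s$ outside the countable exceptional set $\{s:\tilde{e}(s^-)\neq\tilde{e}(s)\}$.

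The uniform bound is then immediate: applying Proposition~\ref{Proposition-Uniform-boundedness for Generator of FV}, which already furnishes
\[
\sup_{s\geq 0,\ \tilde{e}\in D_E[0,\infty)}\|\tilde{\mathcal{G}}_{\tilde{e}}(s)\tilde{\Phi}^{f}\|<\infty,
\]
specializing the supremum to $\tilde{e}\in\{\tilde{e}_N\}_{N\in\N}\subset D_E[0,\infty)$ yields $\sup_{N,s}\|\tilde{\mathcal{G}}_{\tilde{e}_N}(s)\tilde{\Phi}^{f}\|<\infty$ at no extra cost. There is essentially no hard step here; the only point requiring mild care is the invocation of the Skorokhod-to-pointwise passage at continuity points (rather than the weaker ``uniform on compacts modulo time changes'' statement), which is a standard lemma for c\`adl\`ag functions into a separable metric space and uses only the compactness of $E$ implicitly through the fact that $E\subset\mathcal{C}(I,[0,1])$ has been endowed with the sup-norm so that convergence $\tilde{e}_N(s)\to\tilde{e}(s)$ in $E$ really is uniform convergence of fitness functions on $I$.
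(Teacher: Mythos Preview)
Your proposal is correct and follows essentially the same approach as the paper: both reduce the difference to the selection term, bound it by $\|f\|_\infty$ times sums of $\|\tilde{e}_N^j(s)-\tilde{e}_j(s)\|_\infty$, invoke the Skorokhod-to-pointwise convergence at continuity points of $\tilde{e}$ (whose discontinuity set is countable), and derive the uniform bound by specializing Proposition~\ref{Proposition-Uniform-boundedness for Generator of FV} to the sequence $\{\tilde{e}_N\}$. Your write-up is in fact slightly cleaner, packaging the estimate as $2n\alpha\|f\|_\infty\,\|\tilde{e}_N(s)-\tilde{e}(s)\|_\infty^E$ rather than leaving it as a sum over coordinates.
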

\begin{proof}
As $\tilde{e}_N\rightarrow \tilde{e}$ in $D_E[0,\infty)$, for every positive real numbers $s\geq 0$, except possibly countable ones $\tilde{e}_N(s)\rightarrow \tilde{e}(s)$ in $E$, as $N\rightarrow \infty$. The resampling and mutation rates, for both generators, are identical. Thus we need to verify that the limit is $0$ for the selection terms. To this end, for any $m\in \mathcal{P}(I)$, and any continuity point $s\geq 0$ of $\tilde{e}$,
\begin{equation}
\begin{array}{l}
\|\tilde{\mathcal{G}}_{\tilde{e}_N}(s)\tilde{\Phi}-\tilde{\mathcal{G}}_{\tilde{e}}(s)\tilde{\Phi}\|\leq\\
\sum\limits_{i=1}^n\sup\limits_{m\in\mathcal{P}(I)}|<m,\tilde{e}_N^i(s)f-\tilde{e}_N^{n+1}(s)f>-<m,\tilde{e}_i(S)f-\tilde{e}_{n+1}(s)f>|\\
\leq \sum\limits_{i=1}^n\ \|(\tilde{e}_N^i(s)f-\tilde{e}_i(s)f)-(\tilde{e}_N^{n+1}(s)f-\tilde{e}_{n+1}(s)f)\|_\infty\leq\\
\sum\limits_{i=1}^n\|f\|_\infty(\|\tilde{e}_N^i(s)-\tilde{e}_i(s)\|_\infty+\|\tilde{e}_N^{n+1}(s)-\tilde{e}_{n+1}(s)\|_\infty).
\end{array}
\end{equation}
The last term converges to $0$ and this yields the result.

For the second part, write
\begin{equation}
\begin{array}{l}
\sup\limits_{N\in\N, s\geq 0}\|\tilde{\mathcal{G}}_{\tilde{e}_N}(s)\tilde{\Phi}^f\|\leq \sup\limits_{\tilde{e}\in D_E[0,\infty), s\geq 0}\|\tilde{\mathcal{G}}_{\tilde{e}}(s)\tilde{\Phi}^f\| < \infty,
\end{array}
\end{equation}
where the last inequality follows Proposition \ref{Proposition-Uniform-boundedness for Generator of FV}.
\end{proof}
\begin{proposition}\label{Proposition-convergence of generators-uniform boundedness Moran process}
Let $M\in \N$, and let $\tilde{e}_N, \tilde{e} \in D_E[0,\infty)$, and suppose that $\tilde{e}_N \rightarrow \tilde{e}$ in $D_E[0,\infty)$. Then for any $\tilde{\Phi}_M=\tilde{\Phi}_M^f \in \tilde{\mathfrak{F}}_M$
\begin{equation}
\lim\limits_{N\rightarrow \infty} \|\tilde{\mathcal{G}}_{\tilde{e}_N}^M(s)\tilde\Phi_M-\tilde{\mathcal{G}}_{\tilde{e}}^M(s)\tilde\Phi_M\|_M=0
\end{equation}
for every $s\geq 0$ except possibly a countable number of real numbers. Moreover,
\begin{equation}
\sup\limits_{N\in \N, s\geq 0}\|\tilde{\mathcal{G}}_{\tilde{e}_N}^M(s)\tilde{\Phi}_M\|_M<\infty
\end{equation}
\end{proposition}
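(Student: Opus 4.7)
The plan is to observe that the resampling and mutation components of $\tilde{\mathcal{G}}^M_{\tilde{e}}$ do not depend on the environment, so they cancel in the difference
\[
\tilde{\mathcal{G}}_{\tilde{e}_N}^M(s)\tilde\Phi_M - \tilde{\mathcal{G}}_{\tilde{e}}^M(s)\tilde\Phi_M = \tilde{\mathcal{G}}_{\tilde{e}_N}^{sel,M}(s)\tilde\Phi_M - \tilde{\mathcal{G}}_{\tilde{e}}^{sel,M}(s)\tilde\Phi_M,
\]
and only the selection operator needs to be analysed. Writing $\tilde\Phi_M = \tilde\Phi_M^f$ for some $f\in \mathcal{C}(I^M)$, this reduces, via the explicit formula for $\tilde{\mathcal{G}}_{\tilde{e}}^{sel,M}$, to controlling
\[
\frac{\alpha}{M}\sum_{i,j=1}^M \bigl\langle m^{(M)}, (\tilde{e}_N^{\,i}(s)-\tilde{e}^{\,i}(s))\bigl(f\circ \sigma_{ij} - f\bigr)\bigr\rangle
\]
uniformly in $m\in \mathcal{P}^M(I)$.

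The first key step is the straightforward estimate
\[
\bigl\|\tilde{\mathcal{G}}_{\tilde{e}_N}^{sel,M}(s)\tilde\Phi_M - \tilde{\mathcal{G}}_{\tilde{e}}^{sel,M}(s)\tilde\Phi_M\bigr\|_M \;\leq\; 2\alpha M \|f\|_\infty \,\bigl\|\tilde{e}_N(s) - \tilde{e}(s)\bigr\|_\infty^I,
\]
which uses $\|f\circ\sigma_{ij}\|_\infty \leq \|f\|_\infty$ (as argued in the proof of Proposition \ref{Proposition-dual-non-decreasing}) and the trivial bound $|\langle m^{(M)},h\rangle|\leq \|h\|_\infty$. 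The second key step is to invoke the standard property of Skorokhod convergence: since $\tilde{e}_N\to \tilde{e}$ in $D_E[0,\infty)$ and $E$ carries the sup-norm topology, one has $\tilde{e}_N(s)\to \tilde{e}(s)$ in $E$ at every continuity point $s$ of $\tilde{e}$, i.e. for every $s$ outside a countable set. Combining these two facts yields the desired convergence.

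For the uniform bound, the same argument already used in Proposition \ref{Proposition-Uniform-boundedness for Generator of FV} and in part $(iii)$ of Proposition \ref{Proposition-convergence of generators Moran to FV-uniform boundedness} applies verbatim. Since $E\subset \mathcal{C}(I,[0,1])$ implies $\|\tilde{e}_N^{\,i}(s)\|_\infty\leq 1$ for every $i,s,N$, and since the resampling and mutation generators are obviously bounded in terms of $\|f\|_\infty$, $\gamma$, $\beta$, $M$, one gets a constant $C=C(M,\alpha,\beta,\gamma,\|f\|_\infty)$ with
\[
\sup_{N\in\N,\,s\geq 0}\bigl\|\tilde{\mathcal{G}}_{\tilde{e}_N}^M(s)\tilde\Phi_M\bigr\|_M \leq C,
\]
which is the claim. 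No step here should be delicate; the whole proposition is a minor variant of Proposition \ref{Proposition-convergence of generators Moran to FV-uniform boundedness}, with the projection map $\eta^N$ removed since both generators act on the same space $\mathcal{P}^M(I)$.
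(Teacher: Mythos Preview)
Your proposal is correct and follows essentially the same route as the paper's proof: both observe that the resampling and mutation parts cancel, bound the selection difference by a constant times $\|\tilde{e}_N(s)-\tilde{e}(s)\|_\infty^I$, invoke Skorokhod convergence at continuity points of $\tilde{e}$, and handle the uniform bound exactly as in the earlier propositions. Your bookkeeping of the constant $2\alpha M\|f\|_\infty$ is in fact slightly cleaner than the paper's displayed estimate (which drops the $\alpha/M$ prefactor and has $N$ where $M$ is meant).
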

\begin{proof}
The proof is similar to the proof of Proposition \ref{Proposition-Convergenec of generators FV to FV different environments}. Again, resampling and mutation terms of both generators are the same, and for a continuity point $s\geq 0$ of the function $\tilde{e}$
\begin{equation}
\begin{array}{l}
\|\tilde{\mathcal{G}}_{\tilde{e}_N}^M(s)\tilde\Phi_M-\tilde{\mathcal{G}}_{\tilde{e}}^M(s)\tilde\Phi_M\|_M\leq\\
\sum\limits_{i,j=1}^N\|\tilde{e}_N^i(s)-\tilde{e}_i(s)\|_\infty\|f\circ\sigma_{ij}-f\|_{\infty}\leq\\
2(N-1)\sum\limits_{i=1}^N\|\tilde{e}_N^i(s)-\tilde{e}_i(s)\|_\infty\|f\|_{\infty},
\end{array}
\end{equation}
where the last term is converging to $0$ as $N\rightarrow \infty$.
As before, there exists a constant $C>0$ such that for any $m\in \mathcal{P}^M(I)$ and $s\geq 0$
\begin{equation}
|\tilde{\mathcal{G}}_{\tilde{e}_N}^M(s)\Phi_M(m)|\leq C\|f\|_{\infty,M}<\infty.
\end{equation}
\end{proof}
\section{Convergence of MRE to FVRE}\label{Section-proof of convergence of MRE to FVRE and wellposedness}
In this section we prove the wellposedness of the FVRE martingale problem (Theorem \ref{Theorem-Wellposedness of FVRE martingale problem}), and also prove convergence of MRE to FVRE (Theorem \ref{Theorem-convergence of Moran to FV by quench processes}). In the previous sections, we prepared all necessary tools to construct FVRE from MRE. We proved the uniqueness of the FVRE martingale problem, convergence of generators, and other required properties. What remained is the proof of tightness that is relatively simple, due to compactness of the state spaces $I$ and $E$ and uniform boundedness of the fitness process. This section essentially is devoted to the problem of tightness, and proves it for $\{\mu_N^{\tilde{e}_N}\}_{N\in \N}$, $\{\mu^{\tilde{e}_N}\}_{N\in \N}$, and $\{\mu_M^{\tilde{e}_N}\}_{N\in \N}$, where $\tilde{e}$ and $\tilde{e}_N$, for $N\in \N$, are c\`adl\`ag functions in $D_E[0,\infty)$ and $M\in \N$. We apply a modification of Remark $4.5.2$ \cite{EK86-book} which best fits our problem.
\begin{lemma}\label{Lemma-criteria for tightness-general}
Let $S$ be a Polish space, and $S_n\subset S$. Consider $\mathcal{D}\subset \bar{\mathcal{C}}(S)$ that contains an algebra that separates points and vanishes nowhere. Let $\mathcal{D}_n\subset\mathcal{B}(S_n)$ and consider time-dependent linear operators
\begin{equation}
A_n:\R_+\times \mathcal{D}_n\rightarrow \mathcal{B}(S_n)
\end{equation}
for $n\in \N$. For any $N\in \N$, suppose there exists an $S_n$-valued solution $Z_n=(Z_n(s))_{s\geq 0}$ (with sample paths in $D_{S_n}[0,\infty)\subset D_S[0,\infty)$) to the martingale problem $(A_n, \mathcal{D}_n,m_0^n)$ where $m_0^n\in \mathcal{P}(S_n)$. Assume:
\begin{enumerate}[(i)]
\item For any $f\in \mathcal{D}$, there exists a sequence $(f_n)_{n\in \N}$, $f_n\in \mathcal{D}_n$, such that
\begin{equation}
\|f_n-f\|_{\infty,n}\rightarrow 0,
\end{equation}
as $n\rightarrow \infty$ (here $\|.\|_{\infty,n}$ is the general norm $\|.\|_\infty^{S_n}$ defined in the beginning of Section \ref{Section-convergence of generators}).
\item For any $t\geq 0$, there exists $r>1$, such that
\begin{equation}
\limsup\limits_n\mathbb{E}[(\int\limits_0^t|A_n(s)f_n(Z_n(s))|^rds)^{\frac{1}{r}}]<\infty.
\end{equation}
\item (Compact containment condition)\\
For any $\varepsilon, t>0$, there exists a compact set $K_{\varepsilon,t}\subset S$ such that
\begin{equation}
\inf\limits_n \mathbb{P}[Z_n(s)\in K_{\varepsilon,t} \ \ for \ 0\leq s\leq t\}\geq 1-\varepsilon
\end{equation}
Then $\{Z_n\}_{n\in \N}$ is relatively compact (equivalently tight) in $D_S[0,\infty)$.
\end{enumerate}
\end{lemma}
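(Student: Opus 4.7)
The plan is to apply a two-step reduction: first, combine the compact containment condition (iii) with a Stone--Weierstrass argument to reduce tightness of $\{Z_n\}$ in $D_S[0,\infty)$ to tightness of the real-valued processes $\{\phi(Z_n)\}$ in $D_\R[0,\infty)$ for each $\phi$ in a sufficiently rich class; second, exploit the martingale problem structure to establish this real-valued tightness via Aldous's criterion. Concretely, let $\mathcal{A}\subset\mathcal{D}$ denote the algebra separating points and vanishing nowhere. By Stone--Weierstrass, $\mathcal{A}$ is uniformly dense in $\mathcal{C}(K)$ for every compact $K\subset S$, so combined with (iii), Theorem 3.9.4 of \cite{EK86-book} reduces the problem to showing that $\{\phi(Z_n)\}$ is tight in $D_\R[0,\infty)$ for each $\phi\in\mathcal{A}$.

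Fix such a $\phi$ and choose $\phi_n\in\mathcal{D}_n$ with $\|\phi_n-\phi\|_{\infty,n}\to 0$ by (i). Since $Z_n$ solves the martingale problem $(A_n,\mathcal{D}_n,m_0^n)$, one has the semimartingale decomposition
\begin{equation*}
\phi_n(Z_n(t))=\phi_n(Z_n(0))+V_n(t)+M_n(t),
\end{equation*}
where $V_n(t):=\int_0^t A_n(s)\phi_n(Z_n(s))\,ds$ and $M_n$ is a mean-zero martingale. I will verify Aldous's criterion for $\{\phi_n(Z_n)\}$: one-dimensional tightness is immediate because $\phi_n$ is uniformly bounded in $n$, so it remains to show that for any stopping times $\tau_n\leq T$ and any $\delta_n\downarrow 0$,
\begin{equation*}
\phi_n(Z_n(\tau_n+\delta_n))-\phi_n(Z_n(\tau_n))\longrightarrow 0 \quad\text{in probability},
\end{equation*}
which I will split into the $V_n$ and $M_n$ contributions. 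The finite-variation part is handled by H\"older's inequality with the exponent $r>1$ from (ii):
\begin{equation*}
|V_n(\tau_n+\delta_n)-V_n(\tau_n)|\leq \delta_n^{1-1/r}\Bigl(\int_0^{T+1}|A_n(s)\phi_n(Z_n(s))|^r\,ds\Bigr)^{1/r},
\end{equation*}
whose expectation vanishes as $\delta_n\to 0$ uniformly in $n$ by (ii), so the left-hand side tends to zero in $L^1$ and hence in probability.

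The main obstacle is the martingale increment $M_n(\tau_n+\delta_n)-M_n(\tau_n)$, for which the Aldous--Rebolledo criterion is the natural route: tightness of $\{M_n\}$ will follow from controlling the increments of the predictable quadratic variation $\langle M_n\rangle$, which by the martingale problem can be written via the ``carr\'e du champ'' as $\int_0^t\bigl\{A_n(s)\phi_n^2-2\phi_n\,A_n(s)\phi_n\bigr\}(Z_n(s))\,ds$. This step demands that $\phi_n^2\in\mathcal{D}_n$ and a bound analogous to (ii) for $\phi^2$; the former is typically available in the intended applications (where $\mathcal{D}_n$ is itself an algebra, as in $\tilde{\mathfrak{F}}_N$ and $\tilde{\mathfrak{F}}$), while the latter is obtained by applying (i) and (ii) to $\phi^2\in\mathcal{A}$. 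Granting this, a Doob-type estimate
\begin{equation*}
\mathbb{E}\bigl[(M_n(\tau_n+\delta_n)-M_n(\tau_n))^2\bigr]=\mathbb{E}\bigl[\langle M_n\rangle_{\tau_n+\delta_n}-\langle M_n\rangle_{\tau_n}\bigr]
\end{equation*}
combined with H\"older yields the desired convergence in probability. Finally, the uniform approximation $\|\phi_n-\phi\|_{\infty,n}\to 0$ transfers tightness from $\{\phi_n(Z_n)\}$ to $\{\phi(Z_n)\}$, and a second appeal to Theorem 3.9.4 of \cite{EK86-book} together with (iii) lifts tightness of these coordinate projections to tightness of $\{Z_n\}$ in $D_S[0,\infty)$, completing the argument.
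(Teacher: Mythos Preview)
Your overall architecture matches the paper's: first reduce tightness in $D_S[0,\infty)$ to tightness of the real-valued processes $\{f(Z_n)\}$ for $f$ in a point-separating algebra (via compact containment and Stone--Weierstrass), then exploit the martingale-problem structure for the real-valued step. The paper does exactly this, but in a much terser way: it cites Theorem~3.9.1 of \cite{EK86-book} for the reduction and then Theorem~3.9.4 of \cite{EK86-book} as a black box for the real-valued tightness, checking only that $\|f_n-f\|_{\infty,n}\to 0$ and invoking hypothesis~(ii). You have the theorem numbers swapped: the reduction step is 3.9.1, and 3.9.4 is the increment/martingale criterion you are effectively unpacking by hand via Aldous.

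The substantive difference is that you make the second step explicit through an Aldous--Rebolledo decomposition, whereas the paper leaves it entirely to the cited EK86 result. Your finite-variation estimate via H\"older with the exponent $r>1$ is exactly how (ii) is meant to be used. The issue you honestly flag---that controlling $\langle M_n\rangle$ requires $\phi_n^2\in\mathcal{D}_n$ and a bound of type (ii) for $\phi^2$---is a genuine gap in your argument \emph{as a proof of the lemma as stated}, since the hypotheses do not assume $\mathcal{D}_n$ is an algebra. The paper's proof does not confront this explicitly either; it simply invokes Theorem~3.9.4 (and in the preamble says it is ``a modification of Remark~4.5.2'' of \cite{EK86-book}, whose standard specialization to martingale problems does in fact use $f_n^2$). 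So you have correctly identified an assumption that is implicitly being used but not stated. In the applications of this paper the domains $\tilde{\mathfrak{F}}_N$ and $\tilde{\mathfrak{F}}$ \emph{are} algebras, so the gap is harmless there; but for a self-contained proof of the lemma you would either need to add ``$\mathcal{D}_n$ is an algebra'' to the hypotheses, or avoid the carr\'e-du-champ route entirely and instead bound the martingale increment through the boundedness of $f_n$ together with a direct appeal to the EK86 modulus-of-continuity criterion.
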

\begin{proof}
The set $\bar{\mathcal{D}}$ contains an algebra that separates points and vanishes nowhere, and hence it is dense in $\bar{\mathcal{C}}(S)$ in the topology of uniform convergence on compact sets. As the compact containment condition holds, and $Z_n$ takes sample paths in $D_S[0,\infty)$ for any $n$, applying Theorem $3.9.1$ \cite{EK86-book}, it suffices to show for any $f\in \mathcal{D}$, $f\circ Z_n=(f\circ Z_n(s))_{s\geq 0}$ is tight in $D_\R[0,\infty)$. Theorem $3.9.4$ \cite{EK86-book} gives certain criteria under which $f\circ Z_n$ is tight, namely for any $t\geq 0$
\begin{equation}
\limsup\limits_n \mathbb{E}[\sup\limits_{s\in [0,t]\cap \mathbb{Q}}|f_n(Z_N(s))-f(Z_n(s))]<\|f_n-f\|_{\infty,n}.
\end{equation}
But the last term converges to $0$ as $n\rightarrow \infty$. Thus $(ii)$ and the fact that $Z_n$ are solutions to martingale problems $(A_n,\mathcal{D}_n,m_0^n)$ yields the result.
\end{proof}
\begin{lemma}\label{Lemma-tightness of Moran and FV}
For any $M\in \N$ and $(\tilde{e}_N)_{N\in \N}\subset D_E[0,\infty)$, the sequences $(\mu_N^{\tilde{e}_N})_{N\in \N}$ and $(\mu^{\tilde{e}_N})_{N\in \N}$ are tight in $D_{\mathcal{P}(I)}[0,\infty)$, and $(\mu_M^{\tilde{e}_N})_{N\in \N}$ is tight in $D_{\mathcal{P}^M(I)}[0,\infty)$.
\end{lemma}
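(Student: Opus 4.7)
The plan is to verify the three hypotheses of Lemma~\ref{Lemma-criteria for tightness-general} in each of the three cases, taking as ``separating algebra'' the set $\tilde{\mathfrak{F}}$ for the sequences $(\mu_N^{\tilde e_N})_N$ and $(\mu^{\tilde e_N})_N$ (both viewed in $D_{\mathcal{P}(I)}[0,\infty)$), and the algebra $\tilde{\mathfrak{F}}_M$ for the sequence $(\mu_M^{\tilde e_N})_N$ in $D_{\mathcal{P}^M(I)}[0,\infty)$. By Propositions~\ref{Proposition-algebra separates points-FV} and \ref{Proposition-algebra separates points-Moran} these algebras separate points and vanish nowhere, so Lemma~\ref{Lemma-criteria for tightness-general} applies once (i)--(iii) are checked.

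The compact containment condition (iii) is immediate in all three cases: since $I$ is compact, $\mathcal{P}(I)$ is compact in the weak topology and $\mathcal{P}^M(I)\subset \mathcal{P}(I)$ is a closed subset, so we can simply take $K_{\varepsilon,t}=\mathcal{P}(I)$ (respectively $\mathcal{P}^M(I)$). This is the reason no uniform modulus-of-continuity estimate on the paths is needed.

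For hypothesis (i): for $(\mu_N^{\tilde e_N})_N$ the sequence of approximating functions $\tilde\Phi_N\in\tilde{\mathfrak{F}}_N$ attached to a given $\tilde\Phi\in\tilde{\mathfrak{F}}$ is exactly the one constructed in Proposition~\ref{Proposition-convergence of generators Moran to FV-uniform boundedness}, which gives $\|\tilde\Phi_N-\tilde\Phi\|_N\to 0$. For $(\mu^{\tilde e_N})_N$ the state space is already $\mathcal{P}(I)$ and we may take $f_N=f$ for every $N$, making (i) trivial. Similarly for $(\mu_M^{\tilde e_N})_N$ we fix $M$ and take $f_N=f\in\tilde{\mathfrak F}_M$ for all $N$.

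For hypothesis (ii) with $r=2$ (any $r>1$ works): in each case the integrand $A_n(s)f_n(Z_n(s))$ is uniformly bounded in $n$, $s$ and $\omega$. Indeed, for $(\mu_N^{\tilde e_N})_N$ this boundedness is part (iii) of Proposition~\ref{Proposition-convergence of generators Moran to FV-uniform boundedness}; for $(\mu^{\tilde e_N})_N$ it follows from Proposition~\ref{Proposition-Convergenec of generators FV to FV different environments} (which in turn relies on Proposition~\ref{Proposition-Uniform-boundedness for Generator of FV}); and for $(\mu_M^{\tilde e_N})_N$ it follows from Proposition~\ref{Proposition-convergence of generators-uniform boundedness Moran process}. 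Hence there exists a deterministic constant $C=C(f,t)$ with $|A_n(s)f_n(Z_n(s))|\le C$ for all $n$ and all $s\le t$, so
\begin{equation}
\limsup_n \mathbb{E}\Bigl[\Bigl(\int_0^t |A_n(s)f_n(Z_n(s))|^2\,ds\Bigr)^{1/2}\Bigr]\le C\sqrt{t}<\infty.
\end{equation}
There is no real obstacle here; the whole point is that compactness of $I$ and $E$ plus the sup-norm bound on fitness functions makes every generator term a uniformly bounded functional. Applying Lemma~\ref{Lemma-criteria for tightness-general} in each of the three settings yields tightness of the three sequences.
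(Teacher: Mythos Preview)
Your proof is correct and follows essentially the same approach as the paper's own proof: both invoke Lemma~\ref{Lemma-criteria for tightness-general}, verify compact containment from compactness of $\mathcal{P}(I)$, use Propositions~\ref{Proposition-convergence of generators Moran to FV-uniform boundedness}, \ref{Proposition-Convergenec of generators FV to FV different environments}, and \ref{Proposition-convergence of generators-uniform boundedness Moran process} for conditions (i) and (ii), take $f_N=f$ constant in the FV and fixed-$M$ cases, and cite Propositions~\ref{Proposition-algebra separates points-FV} and \ref{Proposition-algebra separates points-Moran} for the separating-algebra requirement. Your version is slightly more explicit (choosing $r=2$ and writing out the bound $C\sqrt{t}$), but there is no substantive difference.
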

\begin{proof}
First note that the compact containment condition always holds due to the compactness of the state space $\mathcal{P}(I)$. Propositions \ref{Proposition-convergence of generators Moran to FV-uniform boundedness}, \ref{Proposition-Convergenec of generators FV to FV different environments}, and \ref{Proposition-convergence of generators-uniform boundedness Moran process} guarantee the conditions $(i)$ and $(ii)$ of Lemma \ref{Lemma-criteria for tightness-general}. Of course, condition $(i)$ holds for $\{\mu^{\tilde{e}_N}\}_{N\in \N}$ by setting $\mathcal{D}_N=\tilde{\mathfrak{F}}$ and $\tilde{\Phi}^{f_N}= \tilde{\Phi}$ for any $N\in \N$. Similarly, in the case of $\{\mu_M^{\tilde{e}_N}\}_{N\in \N}$, we set  $\mathcal{D}_N=\tilde{\mathfrak{F}}_M$, for any $N\in \N$, and the sequence of functions, $\Phi_M^{f_N}$, all identical to $\tilde{\Phi}_M$. Further, $\tilde{\mathfrak{F}}$ and $\tilde{\mathfrak{F}}_N$ separate points and vanishes nowhere (Propositions \ref{Proposition-algebra separates points-FV}, \ref{Proposition-algebra separates points-Moran}) on $\mathcal{P}(I)$ and $\mathcal{P}^N(I)$, respectively. This finishes the proof.
\end{proof}
Now we are ready to prove Theorems \ref{Theorem-Wellposedness of FVRE martingale problem} and \ref{Theorem-convergence of Moran to FV by quench processes}.

\begin{flushleft}
\textbf{Proof of Theorems \ref{Theorem-Wellposedness of FVRE martingale problem} and \ref{Theorem-convergence of Moran to FV by quench processes}}
\end{flushleft}
For Theorem \ref{Theorem-Wellposedness of FVRE martingale problem}, it suffices to  prove existence of a solution to the martingale problem $(\tilde{\mathcal{G}}_{\tilde{e}},\tilde{\mathfrak{F}},\tilde{P}_0)$ for every $\tilde{e}$ (uniqueness of such a martingale problem has been proved (cf. Proposition \ref{Proposition-uniqueness-one dimensional distributions}). Then wellposedness of the quenched martingale problem follows immediately. Also, for Theorem \ref{Theorem-convergence of Moran to FV by quench processes}, integrating over $\Omega$, part $(ii)$ follows part$(i)$, automatically. Therefore, we concentrate on the proof of existence of $(\tilde{\mathcal{G}}_{\tilde{e}},\tilde{\mathfrak{F}},\tilde{P}_0)$, for any $\tilde{e}\in D_E[0,\infty)$, and proof of part $(i)$ in Theorem \ref{Theorem-convergence of Moran to FV by quench processes}.

Let $M$ be an arbitrary natural number. By assumption and continuity of $\tilde{P}_0$ and $\tilde{P}^M_0$, we have the convergence of initial distributions, i.e.
\begin{equation}
\begin{array}{l}
\tilde{P}_0^N(\tilde{e}_N)\rightarrow \tilde{P}_0(\tilde{e}) \ \ \ \ in \ \mathcal{P}(\mathcal{P}(I))\\
\tilde{P}_0(\tilde{e}_N)\rightarrow \tilde{P}_0(\tilde{e}) \ \ in \ \ \ \mathcal{P}(\mathcal{P}(I))\\
\tilde{P}_0^M(\tilde{e}_N)\rightarrow \tilde{P}_0^M(\tilde{e}) \ \ in \ \ \ \mathcal{P}(\mathcal{P}^M(I)).\\
\end{array}
\end{equation}

Propositions \ref{Proposition-convergence of generators-general}, \ref{Proposition-convergence of generators Moran to FV-uniform boundedness}, \ref{Proposition-Convergenec of generators FV to FV different environments}, and \ref{Proposition-convergence of generators-uniform boundedness Moran process} combined with the uniqueness of martingale problems $(\tilde{\mathcal{G}}_{\tilde{e}}, \tilde{\mathfrak{F}},\tilde{P}_0(\tilde{e}))$, $(\tilde{\mathcal{G}}_{\tilde{e}_N}^M, \tilde{\mathfrak{F}}_M,\tilde{P}_0^M(\tilde{e}))$ (Propositions \ref{Proposition-uniqueness of FVRE martingale problem} and \ref{Proposition-well-posedness of Moran martingale problem}) ensure that any convergent subsequence of $(\mu_N^{\tilde{e}_N})_{N\in \N}$ ($(\mu^{\tilde{e}_N})_{N\in \N}$ and $(\mu_M^{\tilde{e}_N})_{N\in \N}$, respectively) converges weakly in the corresponding Skorokhod topology to the unique solution of time-dependent martingale problem $(\tilde{\mathcal{G}}_{\tilde{e}}, \tilde{\mathfrak{F}},\tilde{P}_0(\tilde{e}))$ ($(\tilde{\mathcal{G}}_{\tilde{e}}, \tilde{\mathfrak{F}},\tilde{P}_0(\tilde{e}))$, $(\tilde{\mathcal{G}}_{\tilde{e}_N}^M, \tilde{\mathfrak{F}}_M,\tilde{P}_0^M(\tilde{e}))$, respectively). By tightness of $(\mu_N^{\tilde{e}_N})_{N\in \N}$ ($(\mu^{\tilde{e}_N})_{N\in \N}$ and $(\mu_M^{\tilde{e}_N})_{N\in \N}$, respectively) , Lemma \ref{Lemma-tightness of Moran and FV}, the weak limits exist. This yields part $(i)$ of Theorem \ref{Theorem-convergence of Moran to FV by quench processes}. Note that, for any $\tilde{e}\in D_E[0,\infty)$, letting $\tilde{e}_N=\tilde{e}$ (or $\tilde{e}_N=\tilde{e}_M$, respectively) for all $N\in \N$, we have also proved that
\begin{equation}
\begin{array}{l}
\mu_N^{\tilde{e}}\Rightarrow \mu^{\tilde{e}}\\
\mu_N^{\tilde{e}_M}\Rightarrow \mu^{\tilde{e}_M}
\end{array}
\end{equation}
in $D_{\mathcal{P}(I)}[0,\infty)$ as $N\rightarrow \infty$. Also, in particular for any $\tilde{e}\in D_E[0,\infty)$, setting $\tilde{e}_N=\tilde{e}$ for all $N\in \N$, implies existence of a solution to $(\tilde{\mathcal{G}}_{\tilde{e}},\tilde{\mathfrak{F}},\tilde{P}_0)$, and this, together with the uniqueness, Proposition \ref{Proposition-uniqueness of FVRE martingale problem}, deduces wellposedness.

Note that we do not need continuity of $\tilde{\mathcal{P}}_0$, except to prove convergence results $\mu^{\tilde{e}_N}\Rightarrow \mu$ and $\mu_M^{\tilde{e}_N}\Rightarrow \mu_M$ in the corresponding Skorokhod topology, as $N\rightarrow \infty$.
\section{Continuity of sample paths of FVRE}\label{Section-Continuity}
The purpose of this section is to prove the continuity of sample paths for the FVRE process.
We make use of the criteria developed recently by Depperschmidt et al. in \cite{DGP12} (see also \cite{Bakry-Emery}). To formulate the sufficient conditions under which FVRE takes continuous paths a.s., we shall introduce the concept of first and second order operators. We follow the definitions and the proof of Section 4 in \cite{DGP12}.

\begin{definition}\label{Definition-first and second order operator}
Let $L$ be a Banach space and suppose $\mathcal{D}\subset L$ contains an algebra $\mathcal{A}$. A linear operator $G$ on $L$ with the domain $\mathcal{D}$ is said to be a first order operator with respect to $\mathcal{A}$ if for any $f\in \mathcal{A}$
\begin{equation}
Gf^2-2fGf=0.
\end{equation}
It is said to be a second order operator, if it is not a first order operator, and for every $f\in \mathcal{A}$
\begin{equation}
Gf^3+3f^2Gf-3fGf^2=0.
\end{equation} 
\end{definition}

The following lemma is an extension of Proposition 4.5 in \cite{DGP12} to the case of time-dependent martingale problems.
\begin{lemma}\label{Lemma-Criteria for continuity of sample paths}
Let $S$ be a Polish space, and consider $\mathcal{D} \subset \bar{\mathcal{C}}(S)$ containing a countable algebra $\mathcal{A}$ that separates points of $S$ and contains constant functions. Let $G:\R_+\times \mathcal{D}\rightarrow \mathcal{B}(S)$ be a time-dependent linear operator such that, for any $t\geq 0$,
\begin{equation}
G(t)=G^1(t)+G^2(t),
\end{equation} 
where $G^1(t)$ and $G^2(t)$ are first and second order linear operators for every $t\geq 0$, respectively. Assume that for any $t\geq 0$ and $f\in \mathcal{A}$, $G(.)f$ is uniformly bounded on $[0,t]$, i.e.
\begin{equation}
\sup\limits_{s\leq t} |G(s)f|<\infty.
\end{equation}
Let $m_0 \in \mathcal{P}(S)$. Then, any general solution $Z=(Z_t)_{t\geq 0}$ of the martingale problem $(G,\mathcal{D},m_0)$, has sample paths in $\mathcal{C}([0,\infty),S)$ a.s..
\end{lemma}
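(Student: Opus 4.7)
The plan is to reduce the time-inhomogeneous case to the already established time-homogeneous criterion (Proposition 4.5 of \cite{DGP12}) by means of the space-time trick developed in Section 1.2.

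First, following the construction preceding Lemma \ref{constructive uniqueness of martingale problem}, I pass to the space-time process $Z^*_t=(Z_t,t)$, which takes values in the Polish space $S^{*}:=S\times[0,\infty)$. By Theorem $4.7.1$ of \cite{EK86-book}, $Z$ solves the time-inhomogeneous martingale problem $(G,\mathcal{D},m_0)$ if and only if $Z^*$ solves the time-homogeneous martingale problem $(G^*,\mathcal{D}^{**},m_0^*)$, where $\mathcal{D}^{**}$ is the algebra generated by $\{fh:f\in\mathcal{D},\,h\in\mathcal{C}_c^1([0,\infty))\}$ and, on the generators of this algebra,
\begin{equation*}
G^*(fh)(x,s)=h(s)G(s)f(x)+h'(s)f(x).
\end{equation*}
Let $\mathcal{A}^*$ be the (countable) subalgebra of $\mathcal{D}^{**}$ generated by $\{fh: f\in\mathcal{A},\,h\in H\}$, where $H\subset\mathcal{C}_c^1([0,\infty))$ is a fixed countable subalgebra that separates points of $[0,\infty)$ and contains a function equal to $1$ on each compact. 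Since $\mathcal{A}$ separates points of $S$ and contains constants, $\mathcal{A}^*$ separates points of $S^*$ and contains constants (in the required sense).

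Next, I would introduce the natural decomposition $G^* = G^{*,1}+G^{*,2}$ by setting, on the spanning products,
\begin{equation*}
G^{*,1}(fh)(x,s):=h(s)G^1(s)f(x)+h'(s)f(x),\qquad G^{*,2}(fh)(x,s):=h(s)G^2(s)f(x),
\end{equation*}
and extending linearly. The key algebraic step is to verify:
\begin{enumerate}[(a)]
\item $G^{*,1}$ is a first order operator on $\mathcal{A}^*$. Recall that a first order operator is precisely a derivation; since $G^1(s)$ is a derivation on $\mathcal{A}$ (by an elementary polarization argument applied to $G^1(s)f^2=2fG^1(s)f$) and $h\mapsto h'$ is a derivation on $H$, a direct product-rule computation shows $G^{*,1}(\phi_1\phi_2)=\phi_1 G^{*,1}\phi_2+\phi_2 G^{*,1}\phi_1$ for any spanning products $\phi_1,\phi_2$, whence $G^{*,1}$ is a derivation on all of $\mathcal{A}^*$.
\item $G^{*,2}$ is a second order operator on $\mathcal{A}^*$. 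Here the time derivative does not enter, so for a spanning product $\phi=fh$ one computes directly
\begin{equation*}
G^{*,2}\phi^3+3\phi^2G^{*,2}\phi-3\phi G^{*,2}\phi^2=h^3\bigl(G^2(s)f^3+3f^2 G^2(s)f-3fG^2(s)f^2\bigr)=0,
\end{equation*}
and one extends to sums of products by the same polarization/product-rule bookkeeping used in (a).
\end{enumerate}

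Once the decomposition is in place, the uniform boundedness hypothesis carries over: for any $\phi\in\mathcal{A}^*$, the support of the $h$-factors in $\phi$ lies in some $[0,T]$, so the assumed bound $\sup_{s\le T}|G(s)f|<\infty$ together with $\|h\|_\infty+\|h'\|_\infty<\infty$ yields $\sup_{s\ge 0}\|G^*\phi\|_\infty<\infty$, which is the analogue needed by the time-homogeneous criterion. Applying Proposition 4.5 of \cite{DGP12} to $Z^*$ with the algebra $\mathcal{A}^*$ and the decomposition $G^*=G^{*,1}+G^{*,2}$ gives that $Z^*$ has sample paths in $\mathcal{C}([0,\infty),S^*)$ a.s.; projecting onto the $S$-coordinate delivers the desired continuity of $Z$.

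The main obstacle I anticipate is step (b): the paper's definition of a second order operator demands the cubic identity only on elements of $\mathcal{A}$, and second order operators do not, a priori, behave well under sums and products of algebra generators. Care must therefore be taken in showing that the identity propagates from the spanning products $fh$ to arbitrary $\phi\in\mathcal{A}^*$. The natural way forward is to first verify by polarization that the cubic identity for $G^{*,2}$ on a single element is equivalent to a symmetric trilinear ``third-order Leibniz'' relation among $G^{*,2}\phi$, $G^{*,2}\phi^2$ and $G^{*,2}\phi^3$, and then check that relation on products $f_1h_1\cdot f_2h_2\cdots$ using the fact that $h$'s commute and do not interact with $G^2(s)$. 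Once this bookkeeping is settled, the reduction to the time-homogeneous setting is essentially automatic.
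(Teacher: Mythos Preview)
Your approach is correct in spirit but takes a genuinely different route from the paper. The paper does \emph{not} pass to the space-time process; instead it directly rewrites the moment computation of Proposition~4.5 in \cite{DGP12} in the time-inhomogeneous setting. Concretely, for $f\in\mathcal{A}$ and $F_{f,y}(x):=f(x)-f(y)$, the martingale property gives $\mathbb{E}[F_{f,Z_s}^2(Z_t)]=\int_s^t\mathbb{E}[G(u)F_{f,Z_s}^2(Z_u)]\,du\le C_1(t-s)$, and then Lemma~4.4 of \cite{DGP12} (which only uses the first/second-order identities pointwise in time) yields $\mathbb{E}[(f(Z_t)-f(Z_s))^4]\le\tfrac12 C_1C_2(t-s)^2$; continuity of $f(Z_\cdot)$ follows from the Kolmogorov-type criterion Proposition~3.10.3 in \cite{EK86-book}, and the passage from continuity of all $f(Z_\cdot)$, $f\in\mathcal A$, to continuity of $Z$ is exactly as in \cite{DGP12}. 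No algebra on $S\times[0,\infty)$ is ever built.

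Your reduction is more conceptual and would also work, but note two points. First, the obstacle you flag in~(b) is genuine but resolvable: polarizing the cubic identity shows that ``second order'' is equivalent to the carr\'e du champ $\Gamma(f,g)=G(fg)-fGg-gGf$ being a biderivation, and since $\Gamma^{*,2}(f_1h_1,f_2h_2)=h_1h_2\,\Gamma^{G^2(s)}(f_1,f_2)$, the biderivation property propagates to all of $\mathcal{A}^*$ by induction. Second, there is a small gap you glossed over: $\mathcal{C}_c^1([0,\infty))$ contains no nonzero constant, so $\mathcal{A}^*$ as you define it does \emph{not} contain constants, and the phrase ``in the required sense'' needs justification --- the proof in \cite{DGP12} uses $F_{f,y}=f-f(y)\in\mathcal{A}$, which for $\phi=fh\in\mathcal{A}^*$ becomes $\phi-\phi(y,s)$ and forces the constant $f(y)h(s)$ into the algebra. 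You can fix this by adjoining constants to $\mathcal{A}^*$ (harmless, since $G^*1=0$ and both order identities are trivially satisfied on constants), but it should be said explicitly. The paper's direct route avoids this bookkeeping entirely.
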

\begin{remark}
Note that under the assumption, any general solution of the martingale problem $(G,\mathcal{D},m_0)$ is a solution, i.e. takes its sample paths in $D_S[0,\infty)$ a.s. (cf. Theorem 4.3.6 \cite{EK86-book}).
\end{remark}
\begin{proof}
We follow the proof of Proposition 4.5 in \cite{DGP12}. Let $Z=(Z_t)_{t\geq 0}$ be a solution to the martingale problem $(G,\mathcal{D},m_0)$. For any $f\in \mathcal{A}$, first we prove $(f(Z_t))_{t\geq 0}$ has continuous paths a.s.. For $f\in \mathcal{A}$ and $x,y\in S$, let $F_{f,y}(x):=f(x)-f(y)$. Since $\mathcal{A}$ is an algebra, for any $y\in S$ and $f\in \mathcal{D}$, $F_{f,y}\in \mathcal{A}$, and hence $F_{f,y}^2, F_{f,y}^4\in \mathcal{A}$. Thus
\begin{equation}
F_{f,y}^2(Z_t)-\int\limits_0^t G(u)F_{f,y}^2(Z_u)du
\end{equation}
is a martingale with respect to the canonical filtration. In particular, for $t\geq s$
\begin{equation}
\mathbb{E}[F_{f,Z_s}^2(Z_t)]=\int\limits_s^t \mathbb{E}[G(u)F_{f,Z_s}^2(Z_u)]du\leq C_1(t-s)
\end{equation}
for a constant $C_1>0$. Also,
\begin{equation}
\begin{array}{l}
\mathbb{E}[(f(Z_t)-f(Z_s))^4]=\\
\mathbb{E}[F_{f,Z_s}^4(Z_t)]=  \ \ (by \ Lemma \ 4.4. \ \cite{DGP12})\\
\int\limits_s^t\mathbb{E}[F_{f,Z_s}^2(Z_u)(6GF_{f,Z_s}^2(Z_u)-8F_{f,Z_s}(Z_u)GF_{f,Z_s}(Z_u))]du\leq\\
C_2\int\limits_s^t \mathbb{E}[F_{f,Z_s}^2(Z_u)]du\leq\\
C_1C_2\int\limits_s^t(u-s)du=\frac{C_1C_2(t-s)^2}{2}.
\end{array}
\end{equation}
Continuity of $(f(Z_t))_{t\geq 0}$ follows from Proposition $3.10.3$ \cite{EK86-book}. The remainder of the proof is identical to that of Lemma $4.4$ in (Depperschmidt et al. \cite{DGP12}).
\end{proof}
\begin{lemma}\label{Lemma-order of FVRE generator}
Let $\tilde{e}\in D_E[0,\infty)$. Then 
\begin{enumerate}[(a)]
\item $\tilde{\mathcal{G}}^{mut}$ is first order.
\item For any $t\geq 0$, $\tilde{\mathcal{G}}_{\tilde{e}}^{sel}(t)$ is first order.
\item $\tilde{\mathcal{G}}^{res}$ is second order.
\end{enumerate}
\end{lemma}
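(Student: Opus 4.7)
The plan is to verify the identities in Definition \ref{Definition-first and second order operator} directly on the generating set $\{\tilde\Phi^f: f\in\mathcal{C}_n(I^\N)\}$ of $\tilde{\mathfrak{F}}$, using the algebra structure recalled in equation (\ref{product of functions in the algebra}): for $f\in \mathcal{C}_n(I^\N)$, one has $(\tilde\Phi^f)^k = \tilde\Phi^{F_k}$ with $F_k := f\cdot (f\circ\tau_n)\cdots (f\circ\tau_{(k-1)n})\in \mathcal{C}_{kn}(I^\N)$. The crucial feature is that the $k$ factors of $F_k$ depend on disjoint blocks of coordinates, so under the product measure $m^{\otimes \N}$ the polynomial $(\tilde\Phi^f)^k$ factorizes nicely whenever an operator touches coordinates lying in a single block.

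For (a), I would compute $\tilde{\mathcal{G}}^{mut}\tilde\Phi^{F_2}$ from its defining sum $\beta\sum_{i=1}^{2n}\langle m^{\otimes\N}, B_iF_2-F_2\rangle$. For each $i$ in the $a$-th block (where $a\in\{1,2\}$), the operator $B_i$ acts by $q(x_i, dy)$ only on the $a$-th copy of $f$ and leaves the other copy untouched, so $B_iF_2 = (B_{i'} f)\cdot (f\circ\tau_n)$ or $f\cdot((B_{i'} f)\circ\tau_n)$ depending on the block, with $i'$ the index within the block. Integrating against $m^{\otimes\N}$ and using independence of the blocks yields $\tilde\Phi^{B_{i'}f}\cdot\tilde\Phi^{f}$ or $\tilde\Phi^f\cdot\tilde\Phi^{B_{i'}f}$. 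Summing, both blocks contribute the same quantity $\tilde\Phi^f\cdot\tilde{\mathcal{G}}^{mut}\tilde\Phi^f$, giving $\tilde{\mathcal{G}}^{mut}\tilde\Phi^{F_2} = 2\tilde\Phi^f\cdot\tilde{\mathcal{G}}^{mut}\tilde\Phi^f$. For (b), the same block-by-block argument applies to $\tilde{\mathcal{G}}^{sel}_{\tilde e}(t)$; one just needs to additionally check that the "neutral" term involving $\tilde e_{2n+1}(t)$ (the fresh coordinate for $F_2$) gives $\langle m,\tilde e(t)\rangle\,(\tilde\Phi^f)^2$, which matches $2\tilde\Phi^f\cdot\tilde\Phi^{\tilde e_{n+1}(t)f}$ since $\tilde\Phi^{\tilde e_{n+1}(t)f}=\langle m,\tilde e(t)\rangle\tilde\Phi^f$.

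For (c), the resampling operator involves pairs $(i,j)$ and this is where the second-order character emerges. I would decompose the sum $\sum_{i,j=1}^{kn}$ (for $k=2,3$) into intra-block pairs, where both $i,j$ lie in the same block, and cross-block pairs. Intra-block pairs act as resampling on one factor and leave the others alone, producing, after summing over blocks, a contribution of $k\,(\tilde\Phi^f)^{k-1}\cdot \tilde{\mathcal{G}}^{res}\tilde\Phi^f$. The cross-block pairs define a non-trivial "carré du champ"
\[
\Gamma(\tilde\Phi^f)(m) := \tilde{\mathcal{G}}^{res}(\tilde\Phi^f)^2(m) - 2\tilde\Phi^f(m)\,\tilde{\mathcal{G}}^{res}\tilde\Phi^f(m),
\]
which for $k=2$ comprises the $2n^2$ cross pairs between the two blocks; this is manifestly nonzero (e.g. take $f$ a non-constant function of one variable), so $\tilde{\mathcal{G}}^{res}$ is not first order. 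For $k=3$, by symmetry there are $\binom{3}{2}=3$ unordered choices of block pair and each contributes the same cross-block quantity as in $k=2$, now multiplied by $\tilde\Phi^f$ from the remaining block; hence the cross-block contribution to $\tilde{\mathcal{G}}^{res}(\tilde\Phi^f)^3$ equals $3\tilde\Phi^f\,\Gamma(\tilde\Phi^f)$. Combining,
\[
\tilde{\mathcal{G}}^{res}(\tilde\Phi^f)^3 = 3(\tilde\Phi^f)^2\,\tilde{\mathcal{G}}^{res}\tilde\Phi^f + 3\tilde\Phi^f\,\Gamma(\tilde\Phi^f),
\]
and substituting $\Gamma(\tilde\Phi^f) = \tilde{\mathcal{G}}^{res}(\tilde\Phi^f)^2 - 2\tilde\Phi^f\tilde{\mathcal{G}}^{res}\tilde\Phi^f$ into this identity yields $\tilde{\mathcal{G}}^{res}(\tilde\Phi^f)^3 + 3(\tilde\Phi^f)^2\tilde{\mathcal{G}}^{res}\tilde\Phi^f - 3\tilde\Phi^f\tilde{\mathcal{G}}^{res}(\tilde\Phi^f)^2 = 0$, confirming (c).

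The principal obstacle is the combinatorics behind (c): one must track carefully which pairs of indices $(i,j)$ in $\{1,\ldots,kn\}$ act within a single block versus across blocks, and verify that cross-block pair contributions are identical for any choice of block pair (by exchangeability of the product measure and relabeling of coordinates) so that the count $\binom{k}{2}$ produces the correct coefficient in the cubic identity. Once the intra-block/cross-block decomposition is set up cleanly, the algebraic cancellation is automatic and mirrors the standard Bakry--\'Emery chain rule for diffusion generators, which is precisely the content of being second order in the sense of Definition \ref{Definition-first and second order operator}.
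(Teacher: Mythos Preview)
Your proposal is correct. The paper itself does not give a proof but simply refers to Proposition~4.10 in \cite{DGP12}; your direct verification via the block decomposition of $(\tilde\Phi^f)^k=\tilde\Phi^{F_k}$ (separating intra-block from cross-block index pairs for the resampling operator, and observing that mutation and selection act coordinate-wise within a single block) is precisely the computation carried out in that reference, so your approach and the cited one coincide.
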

\begin{proof}
See Proposition $4.10$ in \cite{DGP12}.
\end{proof}
\begin{flushleft}
\textbf{Proof of Theorem \ref{Theorem-continuity of sample paths of FVRE}}\\
\end{flushleft}
Continuity of the sample paths of $\mu^e$ a.s. follows the continuity of sample paths of $\mu^{\tilde{e}}$ for every $\tilde{e}\in D_E[0,\infty)$. The latter is a consequence of Propositions \ref{Proposition-algebra separates points-FV}, \ref{Proposition-Uniform-boundedness for Generator of FV}, Theorem \ref{Theorem-Wellposedness of FVRE martingale problem}, and Lemmas \ref{Lemma-Criteria for continuity of sample paths} and \ref{Lemma-order of FVRE generator}.

\section{An ergodic theorem for FVRE}\label{Section-proof of ergodic theorem}
This section proves the main ergodic theorem, Theorem \ref{Theorem-main ergodic theorem}, for the FV annealed-environment process. Before giving a complete proof, we show that the semigroup of FV with any deterministic fitness process $\tilde{e}\in D_E[0,\infty)$ has Feller property, i.e. it is from $\mathcal{C}(\mathcal{P}(I))$ to $\mathcal{C}(\mathcal{P}(I))$. For any $0\leq s\leq t$ and $\tilde{e}\in D_E[0,\infty)$, let $p^{\tilde{e}}(s,x;t,dy)$ and $(T_{s,t}^{\tilde{e}})_{0\leq s\leq t}$ be, respectively, the transition probability and the semigroup of $\mu^{\tilde{e}}$, i.e. for $f\in \mathcal{C}(\mathcal{P}(I))$ and $m\in \mathcal{P}(I)$
\begin{equation}
T_{s,t}^{\tilde{e}}f(m)=\int\limits_{\mathcal{P}(I)}f(m')p^{\tilde{e}}(s,m;t,dm').
\end{equation}
\begin{proposition}
Let $\tilde e\in D_E[0,\infty)$ be a deterministic fitness process. Then, $(T_{s,t}^{\tilde e})_{0\leq s\leq t}$ is a Feller semigroup, i.e. for any $0\leq s\leq t$ and for any $f\in \mathcal{C}(\mathcal{P}(I))$, $T_{s,t}^{\tilde e}f\in \mathcal{C}(\mathcal{P}(I))$. In other words, for any $0\leq s\leq t$
\begin{equation}
T_{s,t}^{\tilde e}:\mathcal{C}(\mathcal{P}(I))\rightarrow \mathcal{C}(\mathcal{P}(I)).
\end{equation}
\end{proposition}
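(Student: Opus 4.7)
The plan is to combine the density of the polynomial algebra $\tilde{\mathfrak F}$ in $\mathcal C(\mathcal P(I))$ with the function-valued duality developed in Section \ref{Section-FV Duality}. Since $I$ is compact, so is $\mathcal P(I)$, and $\mathcal C(\mathcal P(I))=\bar{\mathcal C}(\mathcal P(I))$ is a Banach space under the sup-norm. By Proposition \ref{Proposition-algebra separates points-FV}, $\tilde{\mathfrak F}$ is a subalgebra that separates points, contains the constants and vanishes nowhere; hence by Stone--Weierstrass it is uniformly dense in $\mathcal C(\mathcal P(I))$. Since $T_{s,t}^{\tilde e}$ is a linear contraction on bounded measurable functions (being a transition operator), a uniform-limit argument reduces the claim to verifying $T_{s,t}^{\tilde e}\tilde\Phi \in \mathcal C(\mathcal P(I))$ for each $\tilde\Phi\in\tilde{\mathfrak F}$: if $\tilde\Phi_n\to f$ uniformly and each $T_{s,t}^{\tilde e}\tilde\Phi_n$ is continuous, then $T_{s,t}^{\tilde e}f$ is the uniform limit of continuous functions, hence continuous.

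Next I would fix a polynomial $\tilde\Phi=\tilde\Phi^g$ with $g\in \mathcal C_n(I^{\N})$ and express $T_{s,t}^{\tilde e}\tilde\Phi^g(m)$ through the dual. Applying Remark \ref{Remark-duality relation FVRE for times s and t} with $r=s$ and initial state $\hat\psi_0=g$ of the dual gives, for every $t\geq s$,
\begin{equation*}
T_{s,t}^{\tilde e}\tilde\Phi^g(m) \;=\; \mathbb E^m\bigl[\tilde H(\mu_{t-s}^{+s,\tilde e},g)\bigr] \;=\; \mathbb E^g\bigl[\langle m^{\otimes \N},\psi_{t-s}^{t,\tilde e}\rangle\bigr].
\end{equation*}
For each outcome $\omega$, the dual state $\psi_{t-s}^{t,\tilde e}(\omega)$ lies in $\mathcal C_{k(\omega)}(I^{\N})$ for some finite random $k(\omega)$, so the inner integrand reduces to integration against $m^{\otimes k(\omega)}$; this is a continuous function of $m$, because weak convergence $m_n\to m$ in $\mathcal P(I)$ entails $m_n^{\otimes k}\to m^{\otimes k}$ weakly in $\mathcal P(I^{k})$ for every finite $k$, and $\psi_{t-s}^{t,\tilde e}(\omega)$ is a bounded continuous function of those $k(\omega)$ coordinates.

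Finally I would exchange limit and expectation by dominated convergence. Proposition \ref{Proposition-dual-non-decreasing} asserts that $s\mapsto\|\psi_s^{t,\tilde e}\|_\infty$ is non-increasing, so $\|\psi_{t-s}^{t,\tilde e}\|_\infty\leq \|g\|_\infty$ almost surely; hence $\bigl|\langle m^{\otimes \N},\psi_{t-s}^{t,\tilde e}\rangle\bigr|\leq \|g\|_\infty$ uniformly in $m$ and $\omega$. Therefore for any $m_n\to m$ in $\mathcal P(I)$, dominated convergence yields $T_{s,t}^{\tilde e}\tilde\Phi^g(m_n)\to T_{s,t}^{\tilde e}\tilde\Phi^g(m)$, establishing continuity on $\tilde{\mathfrak F}$ and, by the density reduction above, on all of $\mathcal C(\mathcal P(I))$. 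The single delicate ingredient is the a.s.\ uniform sup-norm bound on the dual: without it the random degree $k(\omega)$ would be unbounded and no integrable dominating function would be available, so Proposition \ref{Proposition-dual-non-decreasing} is exactly what makes the scheme work.
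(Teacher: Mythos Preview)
Your argument is correct and shares the paper's core step: both prove continuity of $m\mapsto T_{s,t}^{\tilde e}\tilde\Phi^g(m)$ for polynomials $\tilde\Phi^g\in\tilde{\mathfrak F}$ via the shifted duality of Remark \ref{Remark-duality relation FVRE for times s and t}, using the bound $\|\psi_{t-s}^{t,\tilde e}\|_\infty\le\|g\|_\infty$ to justify passing to the limit. The difference lies only in the extension to general $f\in\mathcal C(\mathcal P(I))$. You invoke Stone--Weierstrass density of $\tilde{\mathfrak F}$ together with the contraction property of $T_{s,t}^{\tilde e}$, so that continuous functions are uniform limits of polynomials and continuity is preserved under uniform limits. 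The paper instead uses that $\tilde{\mathfrak F}$ is convergence-determining on $\mathcal P(\mathcal P(I))$: convergence of $T_{s,t}^{\tilde e}\tilde\Phi^g(m_0^n)$ for all $g\in\mathcal C^*$ forces the transition kernels $p^{\tilde e}(s,m_0^n;t,\cdot)$ to converge weakly, whence $T_{s,t}^{\tilde e}f(m_0^n)\to T_{s,t}^{\tilde e}f(m_0)$ for every continuous $f$. Your route is slightly more elementary and self-contained; the paper's route yields the stronger intermediate statement that the transition probabilities themselves depend weakly continuously on the starting point.
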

\begin{proof}
Recall from Remark \ref{Remark-duality relation FVRE for times s and t} the definition of $(\mu_r^{+s,\tilde e})_{r\geq 0}$:
\begin{equation}
\mu_r^{+s,\tilde e}=\mu_{r+s}^{\tilde e}
\end{equation} 
Let $m_0^n \rightarrow m_0$ in $\mathcal{P}(I)$, as $n\rightarrow \infty$, and $\hat{\psi}_0\in \mathcal{C}^*$. For any $0\leq s\leq t$, the duality relation follows Proposition \ref{Proposition-duality relation FVRE} and Remark \ref{Remark-duality relation FVRE for times s and t}, and since $\hat{\psi}_0$ is bounded continuous depending on a finite number of variables, we have
\begin{equation}
\begin{array}{l}
\lim\limits_{n\rightarrow \infty}\mathbb{E}^{m_0^n}[<\mu_{t-s}^{+s,\tilde e},\hat{\psi}_0>]=\lim\limits_{n\rightarrow \infty}\mathbb{E}^{\hat{\psi}_0}[<m_0^n,\psi_{t-s}^{t,\tilde e}>]=\\
\mathbb{E}^{\hat{\psi}_0}[<m_0,\psi_{t-s}^{t,\tilde e}>]=\mathbb{E}^{m_0}[<\mu_{t-s}^{+s,\tilde e},\hat{\psi}_0>].
\end{array}
\end{equation}
As $\{<.,\hat{\psi}_0>\}_{\hat{\psi}_0\in \mathcal{C}^*}$ is measure-determining, for any $f\in \mathcal{C}(\mathcal{P}(I))$
\begin{equation}
\begin{array}{l}
\lim\limits_{n\rightarrow \infty} T_{s,t}^{\tilde e}f(m_0^n)=
\lim\limits_{n\rightarrow \infty}\mathbb{E}^{m_0^n}[f(\mu_{t-s}^{+s,\tilde e})]=
\mathbb{E}^{m_0}[f(\mu_{t-s}^{+s,\tilde e})]=
T_{s,t}^{\tilde e}f(m_0).
\end{array}
\end{equation}
\end{proof}
\begin{flushleft}
\textbf{Proof of Theorem \ref{Theorem-main ergodic theorem}}
\end{flushleft}
By duality relation (in average), Proposition \ref{Proposition-duality relation FVRE}, and Theorem \ref{Theorem-dual long time behaviour}, for any $m_0\in \mathcal{P}(I)$ and $\hat{\psi}_0\in \mathcal{C}^*$
\begin{equation}
\lim\limits_{t\rightarrow \infty} \mathbb{E}^{m_0}[<\mu_t^e,\hat{\psi}_0>]=\lim\limits_{t\rightarrow \infty}\mathbb{E}^{\hat{\psi}_0}[<m_0,\psi_t^{t,e}>]=\lim\limits_{t\rightarrow \infty}\mathbb{E}^{\hat{\psi}_0}[\psi_t^{t,e}],
\end{equation}
where the limit on the right hand side of the last equality exists and does not depend on $m_0\in \mathcal{P}(I)$. Since $\mathcal{P}(I)$ is compact, $\{\mu_t^e\}_{t\geq 0}$ is tight and therefore there exist some convergent subsequences. Let $(t_i)_{i\in \N}$ and $(s_i)_{i\in \N}$ be two strictly increasing sequences of positive real numbers, and let  $\{\mu_{t_i}^e\}_{i\in \N}$ and $\{\mu_{s_i}^e\}_{i\in \N}$ (with $\mu_0^e=m_0$) be such that
\begin{equation}
\mu_{t_i}^e\Rightarrow\mu_1^{e,m_0}(\infty)
\end{equation}
\begin{equation}
\mu_{s_i}^e\Rightarrow\mu_2^{e,m_0}(\infty)
\end{equation}
in $\mathcal{P}(I)$ as $N\rightarrow \infty$, where $\mu_i^{e,m_0}(\infty),$ for $i=1,2$, are random measures in $\mathcal{P}(I)$. For any $\hat{\psi}_0\in \mathcal{C}^*$
\begin{equation}
\begin{array}{l}
\mathbb{E}[<\mu_1^{e,m_0}(\infty),\hat{\psi}_0>]=\lim\limits_{i\rightarrow \infty}\mathbb{E}^{m_0}[<\mu_{t_i}^e,\hat{\psi}_0>]=\lim\limits_{t_i\rightarrow \infty}\mathbb{E}^{\hat{\psi}_0}[\psi_{t_i}^{t_i,e}]=\\
\lim\limits_{s_i\rightarrow \infty}\mathbb{E}^{\hat{\psi}_0}[\psi_{s_i}^{s_i,e}]=\lim\limits_{i\rightarrow \infty}\mathbb{E}^{m_0}[<\mu_{s_i}^e,\hat{\psi}_0>]=\mathbb{E}[<\mu_2^{e,m_0}(\infty),\hat{\psi}_0>].
\end{array}
\end{equation}
As $\lim\limits_{t_i\rightarrow \infty}\mathbb{E}^{\hat{\psi}_0}[\psi_{t_i}^{t_i,e}]=\lim\limits_{s_i\rightarrow \infty}\mathbb{E}^{\hat{\psi}_0}[\psi_{s_i}^{s_i,e}]$ does not depend on $m_0$, so do $\mu_i^{e,m_0}(\infty)$, for $i=1,2$. Hence, there exists a random probability measure $\mu^e(\infty)\in \mathcal{P}(I)$ such that for any $m_0\in \mathcal{P}(I)$, conditioning on $\mu_0^e=m_0$,
\begin{equation}
\mu_t^e\Rightarrow \mu^e(\infty).
\end{equation}
For part $(ii)$, it is sufficient to prove that conditioning on any initial distribution of $(\mu_t^e,e_t)$, namely $\tilde{m}_0\in \mathcal{P}(\mathcal{P}(I)\times E)$,
\begin{equation}\label{limit-ergodc-little}
\lim\limits_{t\rightarrow \infty}\mathbb{E}[f_1(\mu_t)f_2(e_t)] \ \ \ f_1\in \mathcal{C}(\mathcal{P}(I)), \ f_2\in \mathcal{C}(E)
\end{equation}
exists and does not depend on $\tilde{m}_0$. In that case, since $\mathcal{P}(I)\times E$ is compact, any convergent subsequence of $(\mu_t^e,e_t)$ converges weakly to a unique limit, and from part $(i)$ and assumption, then
\begin{equation}
(\mu_t^e,e_t)\Rightarrow (\mu^e(\infty),e(\infty)).
\end{equation}
To prove the existence of the limit for any arbitrary $\hat{\psi}_0\in \mathcal{C}^*$ and $m_0\in \mathcal{P}(I)$, write
\begin{equation}
\begin{array}{l}
\mathbb{E}[<\mu_t^e,\hat{\psi}_0> f_2(e_t)]=\\
\mathbb{E}[\mathbb{E}[<\mu_t^e,\hat{\psi}_0>|e]f_2(e_t)]=\\
\mathbb{E}[\mathbb{E}[<m_0,\psi_t^{t,e}>|e]f_2(e_t)]
\end{array}
\end{equation}
But, conditioning on $\tau,J_\tau,\kappa,T_\tau^{sel}$, knowing the fact that $\tau$ is finite a.s. and does not depend on $e$, and replacing the continuous function $f$ in (\ref{166} )by $\psi_t^{t,e}e(t)$, we can see the limit of the last term in the last equality exists and does not depend on the choice of $m_0$ and $\hat{\psi}_0$. (Recall that $\{<.,f>\}_{f\in \mathcal{C}^*}$ is measure and convergence-determining.)
Now let $\nu^*$ be the distribution of $(\mu^e(\infty),e(\infty))$. Let $T_t^*$ be the semigroup of the joint annealed-environment process, $(\mu_t^e,e_t)$. For $F\in \mathcal{C}(\mathcal{P}(I)\times E)$, $\tilde{m}_0\in \mathcal{P}(\mathcal{P}(I)\times E)$, and $s\geq 0$,
\begin{equation}
\begin{array}{l}
\int\limits_{\mathcal{P}(I)\times E} T_s^*F d\nu^*=\lim\limits_{t\rightarrow \infty}\int\limits_{\mathcal{P}(I)\times E} T^*_{t+s}Fd\tilde{m}_0=\int\limits_{\mathcal{P}(I)\times E} Fd\nu^*.
\end{array}
\end{equation}
The last equation holds for any $\tilde{m}_0$, including all invariant measures. Hence the uniqueness holds.
\section*{Acknowledgement}
The author wishes to thank Prof. Donald Dawson for introducing the problem, and also for his great support and many helpful discussions during accomplishment of this work.


\begin{thebibliography}{99}
\bibitem{Bakry-Emery} Bakry, D., and \'Emery, M.: Diffusions hypercontractives. In Séminaire de Probabilités, XIX, 1983/84. Lecture Notes in Mathematics, 1123 177–206. Springer, Berlin, 1985.







\bibitem{D93} Dawson, D.A.: Measure-valued Markov processes. In Hennequin, P.L. editor, '{E}cole d'\'{e}t\'{e} de Probabilit\'{e}s de Saint-Flour XXI - 1991, volume 1541 of Lecture Notes in Mathematics, pages 1-260, Springer Berlin Heidelberg, 1993.



\bibitem{DG14} Dawson, D.A., and Greven, A.: Spatial Fleming-Viot models with selection and mutation. Lecture Notes in Mathematics, 2092. Springer, Cham, 2014.

\bibitem{DH82} Dawson, D.A., and Hochberg, K.J.: Wandering random measures in the Fleming-Viot model. The Annals of Probability, p. 554-580, 1982.

\bibitem{IFVRE-DA16} Dawson, D.A., and Jamshidpey, A.: Interacting Fleming-Viot systems in random environments, \textit{in preparation}, 2016.

\bibitem{DK82} Dawson, D.A., and Kurtz, T.G.: Applications of duality to measure-valued diffusion processes. In: Advances in Filtering and Optimal Stochastic Control. Springer Berlin Heidelberg, p. 91-105, 1982.

\bibitem{DGP12} Depperschmidt, A., Greven, A., and Pfaffelhuber, P.: Tree-valued Fleming-Viot dynamics with mutation and selection. The Annals of Applied Probability, v. 22, n. 6, p. 2560-2615, 2012.









\bibitem{etheridge-book} Etheridge, A.M.: An introduction to superprocesses. Providence, RI: American Mathematical Society, 2000.

\bibitem{EK86-book} Ethier, S.N., and Kurtz, T.G.:  Markov Processes: Characterization and Convergence. Wiley, New York, 1986.

\bibitem{EK93} Ethier, S.N., and Kurtz, T.G.: Fleming-Viot processes in population genetics. SIAM Journal on Control and Optimization. 31(2):345-86, 1993.



\bibitem{FV79} Fleming, W., and Viot, M.: Some Measure-Valued Markov Processes in Population Genetics Theory, Indiana Univ. Math. J. 28, n. 5, p. 817–843, 1979.

\bibitem{GPW} Greven, A., Pfaffelhuber, P., and Winter, A.: Tree-valued resampling dynamics Martingale problems and applications. Probability Theory and Related Fields, 155(3-4), pp.789-838, 2013.






\bibitem{Thesis} Jamshidpey, A.: Ph.D. thesis on Population dynamics in random environments, random walks on symmetric groups, and phylogeny reconstruction, University of Ottawa online PhD theses, 2016.












\bibitem{K98} Kurtz, T.: Martingale problems for conditional distributions of Markov processes. Electronic Journal of Probability,  v.3, n.9, p.1-29, 1998.

\bibitem{liggett-book} Liggett, T.: Interacting particle systems. Springer-Verlag, 2005.

\bibitem{Mustonen-Lassig09} Mustonen, V., and L\"assig, M.: From fitness landscapes to seascapes: non-equilibrium dynamics of selection and adaptation. Trends in Genetics, v. 25, n. 3, p. 111-119, 2009.

\bibitem{Mustonen-Lassig10} Mustonen, V., and L\"assig, M.: Fitness flux and ubiquity of adaptive evolution. Proceedings of the National Academy of Sciences, v.107, n.9, p.4248-4253, 2010.




















\end{thebibliography}
\end{document}